\numberwithin{equation}{section}
\theoremstyle{plain}
\newtheorem{thm}{Theorem}[section]
\newtheorem{lemma}[thm]{Lemma}
\newtheorem{corollary}[thm]{Corollary}
\newtheorem{proposition}[thm]{Proposition}
\newtheorem*{remark}{Remark}
\newcommand{\ind}{1\hspace{-0,9ex}1}
\DeclareMathOperator{\dd}{d}
\DeclareMathOperator{\Var}{Var}
\DeclareMathOperator{\E}{E}
\DeclareMathOperator{\diag}{diag}
\DeclareMathOperator{\tr}{tr}
\DeclareMathOperator{\rank}{rank}
\def\d{\mathrm{d}}
\def\C{\mathbb{C}}
\def\E{\mathbb{E}}
\def\N{\mathbb{N}}
\def\R{\mathbb{R}}
\def\NN{\mathcal{N}}
\def\N{\mathbb{N}}
\def\P{\mathbb{P}}
\newenvironment{theorem}{\begin{thm}}{\end{thm}}
\begin{document}

\begin{frontmatter}

	\title{Spectral Analysis of High-Dimensional Sample Covariance Matrices with Missing Observations}
	\runtitle{Spectral analysis with missing observations}
\thankstext{T1}{Supported by the DFG Research Unit 1735, RO 3766/3-1 and DE 502/26-2.}	\begin{aug}
		\author{\fnms{Kamil} \snm{Jurczak}\ead[label=e1]{kamil.jurczak@ruhr-uni-bochum.de}}
		\and
		\author{\fnms{Angelika} \snm{Rohde}\ead[label=e2]{angelika.rohde@ruhr-uni-bochum.de}}

		\affiliation{Ruhr-Universit\"at Bochum}

		\address{Fakult\"at f\"ur Mathematik \\
		Ruhr-Universit\"at Bochum \\
		44780 Bochum \\
		Germany \\
		\printead{e1}
		\phantom{E-mail:\ }\printead*{e2}}
	\end{aug}

\begin{abstract}
%\scriptsize 
We study high-dimensional sample covariance matrices based on independent random vectors with missing coordinates. The presence of missing observations is common in modern applications such as climate studies or gene expression micro-arrays.  A weak approximation on the spectral distribution in the "large dimension $d$ and large sample size $n$" asymptotics is derived for possibly different observation probabilities in the coordinates. The spectral distribution turns out to be strongly influenced by the missingness mechanism. In the null case under the missing at random scenario where each component is observed with the same probability $p$, the limiting spectral distribution is a Mar\v{c}enko-Pastur law shifted by $(1-p)/p$ to the left. As $d/n\rightarrow y\in(0,1)$, the almost sure convergence of the extremal eigenvalues to the respective boundary points of the support of the limiting spectral distribution is proved, which are explicitly given in terms of $y$ and $p$. Eventually, the sample covariance matrix is positive definite if $p$ is larger than
$$
1-\left(1-\sqrt{y}\right)^2,
$$ 
whereas this is not true any longer if $p$ is smaller than this quantity. 
\end{abstract}
\begin{keyword}
\kwd{Sample covariance matrix with missing observations}
\kwd{limiting spectral distribution}
\kwd{Stieltjes transform}
\kwd{almost sure convergence of extremal eigenvalues}
\kwd{characterization of positive definiteness}
\end{keyword}\end{frontmatter}
%\small
\section{Introduction}
In many modern applications %such as climate studies, gene expression micro-arrays or cosmology, 
high-dimensional data suffers from missing observations. As pointed out in \cite{Olga2001}, ``The data from microarray experiments is usually in
the form of large matrices of expression levels of genes
(rows) under different experimental conditions (columns)
and frequently with some values missing. Missing values
occur for diverse reasons, including insufficient resolution,
image corruption, or simply due to dust or scratches on
the slide. Missing data may also occur systematically
as a result of the robotic methods used to create them.'' ``Data available for climate research typically suffer from
uneven sampling due to ... sporadic instrument failure; or other interruptions during the period of interest,'' \cite{Sherwood2001}. Further, missing observations in telescope data may be caused by a cloudy sky, \cite{Nishizawa2013}.\\  
In the statistical literature, high-dimensional low-rank covariance matrix estimation with missing observations has been recently investigated in \cite{Lounici2014}, where sparsity oracle inequalities for a matrix-Lasso estimator are derived. An adaptive test for large covariance matrices with missing observations have been proposed recently in \cite{Butucea2016}. While in view of inference statements asymptotic properties of the eigenvalues and eigenvectors for high-dimensional sample covariance matrices based on complete data are exhaustively investigated in random matrix theory, the statistically equally important case of missing observations has not been studied so far. Concerning spectral based dimension reduction techniques and statistics such as the log-determinant, a profound spectral analysis is inevitable. The aim of this article is to get this development underway.  
We study asymptotic spectral properties of high-dimensional sample covariance matrices with missing observations. 
Let $$Y=(Y_1,...,Y_n)\in\R^{d\times n},~~Y_k=(Y_{1k},...,Y_{dk})^\ast \in\R^d,~~k=1,...,n,$$ be a sample of independent identically distributed (iid) random vectors with covariance matrix 
$$
T = \E\big((Y_1-\E Y_1)\otimes(Y_1-\E Y_1)\big).
$$ 
In examples as described above, we do not observe the whole random vector $Y_k$ but some of its components. This missingness is represented by a random matrix $\varepsilon\in \R^{d\times n}$ with entries
$$
\varepsilon_{ik} = \begin{cases}
1 & \text{if $Y_{ik}$ is observed}\\
0 & \text{if $Y_{ik}$ is missing}.
\end{cases}
$$
Under the assumption that the matrices $Y$ and $\varepsilon$ are independent, the estimator %i.e. data is missing completely at random, and that the columns of $\varepsilon$ are independent,
$$
\hat{T}_{ij} = \frac{1}{N_{ij}}\sum_{k\in\NN_{ij}}\left(Y_{ik}-\bar{Y}_i\right)\left(Y_{jk}-\bar{Y}_j\right)
$$
is the analogue of the sample covariance and hence the natural estimator for $T_{ij}$, where
\begin{align}\label{eq: set and number}
\NN_{ij}   =      \Big\{ k\in\{1,\dots ,n \} :\ \varepsilon_{ik}\varepsilon_{jk}=1\Big\}, \ \  N_{ij} = 1\vee\# \NN_{ij} 
\end{align}
and
$$
 \bar{Y}_i = \frac{1}{N_{ii}}\sum_{k\in\NN_{ii}}Y_{ik}.
$$
Subsequently, $\hat T=(\hat T_{ij})\in\R^{d\times d}$ is referred to as sample covariance matrix with missing observations. If $\E Y_k=0$ is known in advance one typically uses the estimator
$$\hat\Sigma=\big( \hat\Sigma_{ij} \big)\in\R^{d\times d},\ \ \hat\Sigma_{ij}=\frac{1}{N_{ij}}\sum_{k\in \NN_{ij}}Y_{ik}Y_{jk}.$$
In what follows we write $\hat \Xi$ for $\hat T$ and $\hat\Sigma$ if a statement holds for both estimators.
The distribution of the missingness matrix $\varepsilon$ substantially influences the spectrum of $\hat \Xi$ (see Figure 1). In the high-dimensional scenario, $\hat \Xi$ may be asymptotically indefinite even if the smallest eigenvalue of $T$ stays uniformly bounded away from zero. Heuristically, it is not clear at all how the high dimensionality affects the spectral properties in the situation of missing observations, and whether well-known phenomena occur in a possibly modified way. In this article we investigate asymptotic spectral properties of $\hat \Xi$ under the classical missing (completely) at random (MAR) setting. 
\begin{figure}[h] 
  %\centering
     \includegraphics[width=0.92\textwidth]{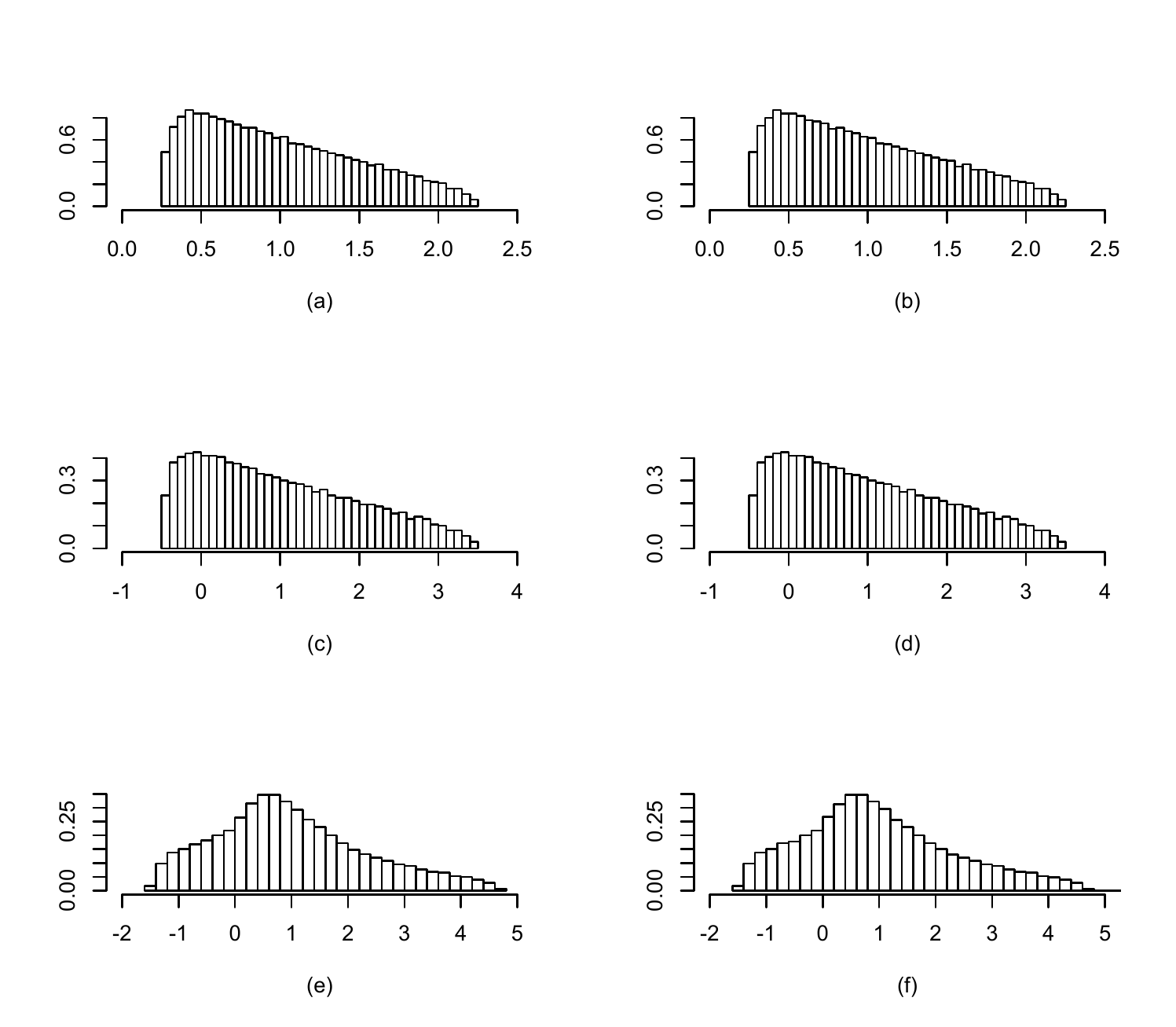}
  \caption{The left column shows histograms of the eigenvalues of the estimator $\hat\Sigma$ and the right column of the estimator $\hat T$ from a centered Gaussian sample. The underlying population covariance matrix in each histogram is the identity. The dimension of the observations in the first row is 2000, the sample size 8000 and all coordinates are observed. In the second row each coordinate is observed with probability 1/2. In the last row the probabilities of observation are changed to 1/4 for the first 1000 coordinates and to 3/4 for the other half of the coordinates.}
  \label{fig:Spektralverteilung2}
  \end{figure}
Here, the variables $\varepsilon_{ik}$, $i=1,...,d$, $k=1,...,n$, are independent random Bernoulli variables with
 $$\P(\varepsilon_{ik}=1)=p_i \ \ \text{ and }\ \ \P( \varepsilon_{ik}=0)=1-p_i,$$ 
and they are jointly independent of $Y_1,\dots,Y_n$. The latter are assumed to be of the form
$$
Y_k\ =\ T^{1/2}X_k+\E Y_k,~~k=1,\dots, n,
$$
where $X_1,\dots,X_n$ are iid centered random vectors with independent coordinates of variance $1$. This representation is common in literature on random matrix theory. Without missing observations, that is, for completely observed random vectors $Y_1,\dots Y_n$, the classical sample covariance matrix is a well-studied object in the large dimension $d$ and large sample size $n$ asymptotics. The first result on its spectral distribution is due to \cite{Marcenko1967}. They  established in particular weak convergence in probability of the empirical spectral distribution for diagonal $T$ under the assumption of finite fourth moment on the entries of $X_1,\dots, X_n$ and some dependency condition reflected in their mixed second and fourth moments. The most general version of this statement has been proved in \cite{Silverstein1995}, where weak convergence (almost surely) is established under the finite second moment assumption for rather general matrices $T$. The almost sure convergence of the largest eigenvalue in the null case $T=I_{d\times d}$ (identity matrix) has been proved in \cite{Bai1988} under the assumption of the existence of the fourth moment, which generalizes a first result in this direction due to \cite{Geman1980}. \cite{Bai1988b} have shown that the existence of the fourth moment is in fact necessary. As concerns the smallest eigenvalue in the null case, the most current theorem on its almost sure convergence has been derived by \cite{Bai1993}. Under quite general regularity conditions on $T$, the convergence of the extremal eigenvalues to the respective boundaries of the support of the limiting spectral distribution follows from \cite{Bai1998}.\\
Our contributions in this article are the following. 
\begin{itemize}
\item[(i)] We establish a weak approximation of the empirical spectral distribution of the sample covariance matrix with missing observations $\hat\Xi$ by a non-random sequence of probability measures expressed in terms of their Stieltjes transforms, which holds true for possibly different observation probabilities in the coordinates. In the null case under the missing at random scenario where each component is observed with the same probability $p$, the limiting spectral distribution is shown to be a Mar\v{c}enko-Pastur law shifted by $(1-p)/p$ to the left.  
\item[(ii)] As $d/n\rightarrow y\in(0,1)$ and under the missing at random scenario where each component is observed with the same probability, we prove almost sure convergence of the extremal eigenvalues of $\hat\Sigma$ to the respective boundary points of the support of the limiting spectral distribution in the null case. A statistically important consequence is the characterization of positive definiteness for the sample covariance matrix with missing observations. 
\end{itemize}
Understanding the empirical spectral distribution of sample covariance matrices with missing observations is of great importance to develop improved estimators for the population covariance matrix and the precision matrix. Such estimators have been already established for completely observed data by \cite{Karoui2008} and \cite{Ledoit2012}  based on non-linear shrinkage of the eigenvalues. However, if some data is missing, the situation is more intricate since the analysis in our article reveals that the limiting behavior of the empirical spectral distribution does not only depend on the eigenvalues of the population covariance matrix but also on its eigenvectors. Nevertheless, we expect that adjusting the diagonal of the sample covariance matrix with missing observations yields a more suitable matrix for spectrum estimation. 

\noindent
Very recently, various authors studied asymptotic spectral properties of sample autocovariance matrices of high-dimensional time series which is another statistically relevant scenario. \cite{Jin2014} derived the limiting spectral distribution of the symmetrized autocovariance matrix  in the iid case. \cite{Liu2015} established a Mar\v{c}enko-Pastur-type law for the empirical spectral distribution in case of general high-dimensional linear time series. They investigated  the moderately high-dimensional case of this problem in \cite{Wang2015b}. \cite{Li2015} developed the limiting singular value distribution of the sample autocovariance matrix by means of the Stieltjes transform for an independent sequence with elements possessing finite fourth moments. \cite{Wang2015c} proved the same result by the method of moments, and additionally the almost sure convergence of the spectral norm. The strong limit of the extreme eigenvalues of symmetrized autocovariance matrices is established in \cite{Wang2015}.\\
The article is organized as follows. First we introduce the essential notation and the model assumptions in the next section. Section \ref{section: results} is devoted to our main results. The proof of Theorem \ref{theorem: main} is quite long and therefore decomposed into Section \ref{section: part 1}, Section \ref{section: part 2} and Appendix \ref{A}. The proof of Theorem \ref{theorem: extreme} is deferred to Section \ref{section: thm 2} and Appendix \ref{B}. Some auxiliary results which are used throughout the proofs are collected in Appendix \ref{section: appendix}.
%\pagebreak

\section{Notation and preliminaries}
\subsection{Notation}
For any bounded function $f:\R\rightarrow\R$
$$\Arrowvert f \Arrowvert=\sup_{x\in\R}\arrowvert f(x)\arrowvert$$
denotes its supremum norm. If $f$ is Lipschitz in addition then the bounded Lipschitz norm is defined as
$$\Arrowvert f \Arrowvert_{BL}=\max\left(\Arrowvert f \Arrowvert_L,\Arrowvert f \Arrowvert\right),$$
where $\Arrowvert f \Arrowvert_L$ denotes is the best Lipschitz constant of $f$.
We write 
$$
\C^+\ =\ \left\{z\in\C:\, \Im(z)>0\right\}
$$
for the upper complex half plane.
For any Hermitian matrix $A\in\C^{d\times d}$ denote the (normalized) spectral measure by 
$$\mu^A=\frac{1}{d}\sum_{i=1}^d \delta_{\lambda_i(A)},$$
where $\lambda_1(A)\ge...\ge \lambda_d(A)$ are the eigenvalues of $A$ and $\delta_x$ denotes the Dirac measure in $x$. If it is clear that we refer to a matrix $A$, we use the shortened notation $\lambda_1\ge...\ge \lambda_d$. We write $A^\ast$ for the adjoint of $A$. Let us introduce the Schatten norms for matrices 
$$\Arrowvert A \Arrowvert_{S_p}=\left(\sum_{i=1}^d \lambda_i(AA^\ast)^{p/2}\right)^{1/p},\ \ p\ge 1.$$
Furthermore, $\tr(A)$ denotes the trace of $A$ and $\rank(A)$ its rank.  For two matrices $A,B\in \R^{d\times n}$ we write $A\circ B=(A_{ik}B_{ik})_{i,k}$ for the Hadamard product. For any vector $v\in\R^d$, $\diag(v)\in\R^{d\times d}$ is the diagonal matrix with the $i$-th diagonal entry equal to $v_i$. With slight abuse of notation we also write $\diag(A)$ for $\diag(A_{11},\dots,A_{dd})$, $A\in \R^{d\times d}$.
The Stieltjes transform of a measure $\mu$ on the real line is defined by
\begin{align*}
m_\mu(z)=\int_\R \frac{1}{\lambda-z}\d \mu(\lambda),\ \ z\in \C^+.
\end{align*}
On the space of probability measures on $\R$ recall the following distance measures\begin{align*}\text{Kolmogorov metric:}\ \ & %\Arrowvert \mu-\nu \Arrowvert_K
d_K(\mu,\nu)=\Arrowvert \mu((-\infty,\cdot])-\nu((-\infty,\cdot]) \Arrowvert,\\
\text{Dual bounded Lipschitz metric:}\ \ & d_{BL}(\mu,\nu)=\sup_{\Arrowvert f \Arrowvert_{BL}\le 1}\int_\R f\d(\mu-\nu),\\
\text{L\'evy metric:}\ \ &\\
&\hspace{-4cm}d_L(\mu,\nu)=\inf \Big\{\varepsilon>0 \ \Big\arrowvert \ \mu((-\infty,x-\varepsilon])-\varepsilon\le \nu((-\infty,x])\\
&\hspace{1cm} \le \mu((-\infty,x+\varepsilon])+\varepsilon  \text{ for all }x\in\R\Big\}.
\end{align*}
We will frequently make use of the well-known relation $d_L(\mu,\nu)\le d_K(\mu,\nu)$  for any two probability measures $\mu$ and $\nu$ on the real line, cf. \cite{Petrov1995}, p. 43. For any measures $\mu$ and $\nu$, $\mu\star \nu$ denotes their convolution. As usual, $\Longrightarrow$ stands for weak convergence. The Mar\v{c}enko-Pastur distribution with parameters $y,\sigma^2>0$ is given by
\begin{align}
\mu^{\text{MP}}_{y,\sigma^2}=\left(1-\frac{1}{y}\right)_+\delta_{0}+\frac{1}{2\pi\sigma^2}\frac{\sqrt{(b-x)(x-a)}}{yx}\ind\{a\le x \le b\}\d x\label{eq: mp dichte}
\end{align}
with $a=\sigma^2(1-\sqrt{y})^2$ and $b=\sigma^2(1+\sqrt{y})^2$. Moreover, for $\sigma^2>0$ let
$\mu^{\text{MP}}_{0,\sigma^2}=\delta_{\sigma^2}$.
The notation $\lesssim$ means less or equal up to some positive multiplicative constant which does not depend on the variable parameters in the expression. %$A\sim B$ should be read as $A\lesssim B$ and $B\lesssim A$ at once.

\subsection{Preliminaries}\label{sec: pre}
Let $(X(i,k))_{i,k\in\N}$ be a double array of iid centered random variables with unit variance. The left upper $d\times n$ submatrix is denoted by $X_{d,n}$.
Then the random vectors
$Y_{1,d,n},\dots,Y_{n,d,n}\in\R^d$ are the columns of the matrix
$$Y_{d,n}-\E Y_{d,n}=T_{d,n}^{1/2}X_{d,n}.$$
with 
$$T_{d,n}=\diag(T_{11,d,n},\dots,T_{dd,d,n})\in\R^{d\times d}.$$
This structure on the population covariance matrix is the simplest one which allows to visualize the effects of missing observations on the spectrum of the sample covariance matrix. The non-diagonal case is discussed at the end of Section \ref{section: part 1}. Its treatment requires some technical modification of the arguments presented here but not substantially new ideas and is beyond the scope of the article. $(\varepsilon_{d,n})_{d,n}$ is a triangular array of random matrices $\varepsilon_{d,n}\in\R^{d\times n}$ independent of $(X(i,k))_{i,k\in\N}$, where the entries $\varepsilon_{ik,d,n}$ are independent Bernoulli variables with observation probabilities $$\P(\varepsilon_{ik,d,n}=1)=p_{i,d,n},~~i=1,\dots,d,\ k=1,\dots n.$$
The dependence of the set $\mathcal N_{ij}$ and the number $N_{ij}$ in \eqref{eq: set and number} on the sequence $(\varepsilon_{d,n})$ is indicated by an additional subscript $d,n$.
Throughout this article we impose that the family of spectral measures of the population covariance matrices $(T_{d,n})$ as well as the family of empirical distributions $$\left(\mu^{w_{d,n}}\right)_{d,n}, \ \ \ \text{with}\ \ \mu^{w_{d,n}}=\frac{1}{d}\sum_{i=1}^d\delta_{w_{i,d,n}}\ \ \text{and}\ \ w_{d,n}=\big(p_{1,d,n}^{-1},...,p_{d,d,n}^{-1}\big),$$
are tight. This assumption ensures that there are not too many probabilities of observation $p_{i,d,n}$ in the vector $p_{d,n}$ that are very close to zero, in the sense that for most coordinates $i=1,...,n$ the number of observations remains proportional to $n$, while a few degeneracies may occur. 
Asymptotic statements refer to
\begin{align}\label{eq: asymptotic}
d\to \infty \ \ \text{ while $n=n(d)$ satisfies} \ \ \limsup_{d\to\infty}\,(d/n) <\infty.
\end{align}
The sequence of sample covariance matrices with missing observations is denoted by $$\left(\hat \Xi_{d,n}\right)_{d,n},$$
the corresponding sequence of spectral measures by $(\mu_{d,n})_{d,n}$ and their Stieltjes transforms by $(m_{d,n})_{d,n}.$

\section{Results}\label{section: results}
The main results of the article are the weak approximation of the spectral measure $\mu_{d,n}$ of $\hat\Xi_{d,n}$ by a non-random sequence of probability measures, and, in the null case, the almost sure convergence of the extremal eigenvalues of $\hat \Sigma_{d,n}$. Thereto, define the matrices
\begin{align*}
S_{d,n}&=\diag\left(\frac{1-p_{1,d,n}}{p_{1,d,n}}T_{11,d,n},\dots,\frac{1-p_{d,d,n}}{p_{d,d,n}}T_{dd,d,n}\right) \ \ \\ \text{and} \ \ R_{d,n}&=\diag\left(\frac{1}{p_{1,d,n}}T_{11,d,n},\dots,\frac{1}{p_{d,d,n}}T_{dd,d,n}\right).
\end{align*}
\begin{theorem}\label{theorem: main}
%Assume that the covariance matrices are of the form $(T_{d,n})=(\diag((T_{11,d,n},...,T_{dd,d,n}))$ and their corresponding spectral measure form a tight sequence. Moreover for the sequence  $(p_{d,n}=(p_{1,d,n},...,p_{1,d,n}))$ of vectors of probabilities the associated sequence of measure
%\begin{align*}
%\left(\frac{1}{d}\sum_{i=1}^d \delta_{p_{i,d,n}^{-1}}\right)
%\end{align*}
%is tight. 
Suppose that the assumptions stated in Subsection \ref{sec: pre} hold, and $$\sup_d \Arrowvert R_{d,n} \Arrowvert_{S_\infty}<\infty.
$$ 
Then for any $z\in \C^+$, $$\arrowvert m_{d,n}(z)-m_{d,n}^\circ(z)\arrowvert \rightarrow 0 \ \ \text{a.s.},$$ where $m_{d,n}^{\circ}(z)$ satisfies 
\begin{align}
m_{d,n}^{\circ}(z)\ =\ \frac{1}{d}\tr\left\{\left(  \frac{1}{1+\frac{d}{n}e_{d,n}^{\circ}(z)}R_{d,n}-S_{d,n} - zI_{d\times d}\right)^{-1}\right\}\label{eq: def m-circ}
\end{align}
and $e_{d,n}^{\circ}$ is the (unique) solution of the fixed point equation
\begin{align*}
e_{d,n}^{\circ}(z)\ &=\ \frac{1}{d}\tr\left\{R_{d,n}\left(\frac{1}{1+\frac{d}{n}e_{d,n}^{\circ}(z)}R_{d,n}- S_{d,n}-zI_{d\times d}\right)^{-1}\right\}.
\end{align*}
Moreover, $m_{d,n}^\circ$ is the Stieltjes transform of a probability measure $\mu_{d,n}^\circ$ on the real line and
\begin{align*}
\mu_{d,n}^\circ-\mu_{d,n}\Longrightarrow 0 \ \ \text{a.s.}
\end{align*}
\end{theorem}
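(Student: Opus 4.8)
The plan is to follow the now-classical Marčenko--Pastur strategy adapted to the Hadamard-product structure induced by the missingness matrix $\varepsilon_{d,n}$. Write $\hat\Xi_{d,n}$ in the form $\frac1n B_{d,n}B_{d,n}^\ast$-like expression after centering; the key observation is that $\hat\Xi_{ij,d,n}$ involves the entrywise product $(\varepsilon\circ(T^{1/2}X))$, so that the natural resolvent to analyse is $Q(z)=\big(\frac1n (\varepsilon\circ Y')(\varepsilon\circ Y')^\ast \circ (\text{normalization}) - zI\big)^{-1}$, where $Y'=Y-\E Y$. After conditioning on $\varepsilon_{d,n}$, the rows of $\varepsilon\circ Y'$ are independent with covariance depending on the $p_{i,d,n}$'s, and the diagonal normalization by $N_{ij,d,n}$ is, by a law-of-large-numbers argument under the tightness assumption on $\mu^{w_{d,n}}$, close to its deterministic counterpart $p_ip_j n$ (off-diagonal) and $p_in$ (diagonal). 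The shift structure $R_{d,n}-S_{d,n}$ in \eqref{eq: def m-circ} is exactly what arises: the diagonal entries carry the extra variance $p_i^{-1}T_{ii}$ while the rank-one-type outer products contribute $-S_{d,n}=-\diag((1-p_i)/p_i\,T_{ii})$ to the effective population matrix. I would first establish the deterministic equivalent at the level of $\E Q(z)$ via the standard resolvent-perturbation / leave-one-out (Schur complement) expansion, controlling the martingale differences to upgrade convergence in expectation to almost-sure convergence of $m_{d,n}(z)-m_{d,n}^\circ(z)\to0$; this gives the first assertion, which by the statement I may take as granted.

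For the final assertion --- that $m_{d,n}^\circ$ is the Stieltjes transform of a genuine probability measure $\mu_{d,n}^\circ$ and that $\mu_{d,n}^\circ-\mu_{d,n}\Rightarrow0$ a.s. --- I would argue as follows. First, existence and uniqueness of $e_{d,n}^\circ(z)$ on $\C^+$: one shows the map $z\mapsto e_{d,n}^\circ(z)$ defined by the fixed-point equation has a solution in $\C^+$ with the correct analytic and boundary behaviour (a Montel/normal-families argument together with a contraction estimate in a suitable metric on $\C^+$, as in Silverstein--Choi). Since $R_{d,n}$ is positive semidefinite and uniformly bounded in $S_\infty$-norm, and $S_{d,n}$ is a fixed real diagonal matrix, the matrix $M(z):=\frac1{1+\frac dn e_{d,n}^\circ(z)}R_{d,n}-S_{d,n}-zI$ is invertible for $z\in\C^+$ and $m_{d,n}^\circ(z)=\frac1d\tr M(z)^{-1}$; one then checks that $m_{d,n}^\circ$ maps $\C^+$ to $\C^+$, is analytic there, and satisfies $\Im(z)\,|m_{d,n}^\circ(z)|\to$ the right normalization and $z\,m_{d,n}^\circ(z)\to-1$ as $z\to\infty$ along the imaginary axis. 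By the Nevanlinna/Stieltjes inversion characterization this forces $m_{d,n}^\circ$ to be the Stieltjes transform of a probability measure $\mu_{d,n}^\circ$ on $\R$. Finally, pointwise a.s. convergence $m_{d,n}(z)-m_{d,n}^\circ(z)\to0$ on $\C^+$, combined with the fact that both are Stieltjes transforms of probability measures and hence form a tight family (tightness of $(\mu_{d,n})$ follows from the boundedness of $\Arrowvert R_{d,n}\Arrowvert_{S_\infty}$ together with the tightness of the spectral measures of $T_{d,n}$ and of $\mu^{w_{d,n}}$), yields $d_{BL}(\mu_{d,n}^\circ,\mu_{d,n})\to0$ a.s. by the standard equivalence between pointwise convergence of Stieltjes transforms and weak convergence, i.e. $\mu_{d,n}^\circ-\mu_{d,n}\Rightarrow0$ a.s.

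The main obstacle, as anticipated in the paper's own remark that ``the proof of Theorem \ref{theorem: main} is quite long,'' is the concentration step: because the normalizations $N_{ij,d,n}$ are themselves random and coupled across $i,j$ through $\varepsilon_{d,n}$, the matrix $\hat\Xi_{d,n}$ is \emph{not} of the plain sample-covariance form $\frac1n\sum_k r_kr_k^\ast$, so the usual rank-one update formulas must be applied after a careful decomposition separating the ``ideal'' Hadamard-weighted matrix (with deterministic normalization $p_ip_jn$) from the fluctuation of $N_{ij,d,n}/(p_ip_jn)$ around $1$. Quantifying that fluctuation uniformly --- and showing it contributes negligibly to the resolvent in operator norm, using the tightness of $\mu^{w_{d,n}}$ to handle the coordinates with small $p_{i,d,n}$ --- is where the real work lies; everything downstream (the fixed-point analysis and the passage from Stieltjes transforms to weak convergence) is then routine and is exactly the content deferred to Sections \ref{section: part 1}, \ref{section: part 2} and Appendix \ref{A}.
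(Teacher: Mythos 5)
Your high-level description correctly identifies the random normalization $N_{ij,d,n}$ and the deterministic targets $p_ip_jn$ / $p_in$ as the main obstacle, but the strategy you propose at the crux diverges from the paper in a way that does not work. You suggest conditioning on $\varepsilon_{d,n}$ and then treating $\varepsilon\circ Y'$ as an array with independent rows. Conditionally on $\varepsilon$, however, the \emph{columns} of $\varepsilon\circ Y'$ are no longer identically distributed: the $k$-th column has conditional covariance $\diag(\varepsilon_{1k}T_{11},\dots,\varepsilon_{dk}T_{dd})$, which varies with $k$ and is arbitrary. The leave-one-out / rank-one resolvent expansion on which you want to build the deterministic equivalent requires i.i.d. (or at least identically structured) sample columns, so conditioning destroys exactly the structure you need. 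The paper sidesteps this entirely by absorbing $\varepsilon$ into a new matrix $Z_{ik}=X_{ik}\varepsilon_{ik}/p_{i}^{1/2}$, whose entries are genuinely i.i.d. (centered, unit-variance) without any conditioning, so that after the eight reduction steps in Appendix A one studies $\frac{1}{n}R_{d,n}^{1/2}Z_{d,n}Z_{d,n}^\ast R_{d,n}^{1/2}-S_{d,n}$ --- a proper sample-covariance-plus-deterministic-drift model. That reduction is the central idea and is missing from your proposal.

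Your claim that once the normalization is handled ``everything downstream (the fixed-point analysis and the passage from Stieltjes transforms to weak convergence) is then routine'' is also too optimistic, and the paper warns against exactly this. The deterministic drift $-S_{d,n}$ is \emph{negative} semidefinite, so $\hat\Xi_{d,n}$ is a priori indefinite and its spectrum leaks into $(-\infty,0)$. The Silverstein--Choi / Couillet--Hachem arguments you invoke rely on the sample covariance ensemble (or the shifted variant $B^{1/2}XX^\ast B^{1/2}+A$ with $A\succeq 0$) having nonnegative spectrum, because then $z\mapsto -1/(z(1+m(z)))$ is again a Stieltjes transform of a measure on $[0,\infty)$ and $|1+\tfrac dn e(z)|$ is bounded below by $1$ at $z=iv$. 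None of this holds once the spectrum includes negatives: the needed lower bounds on $|1+Y_k^\ast\hat G^{(k)}(z)Y_k|$ and $|1+\tfrac dn e_{d,n}(z)|$ have to be replaced by bounds that depend on $\max_i|\lambda_i|$ and the operator norms (the paper's estimates \eqref{eq: Stieltjes5}, \eqref{eq: bound3}). This is a genuine new technical difficulty, not a routine adaptation. Finally, for existence of $e_{d,n}^\circ$ the paper does not use Montel/contraction as you propose but a block-diagonal bootstrap: it forms $l$-fold block-diagonal replicas of $(R_{d,n},S_{d,n})$, observes that the fixed-point equation is invariant under this replication, and extracts a convergent subsequence of the already-controlled $e_{(d,n)_l}$. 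Your contraction route might be made to work, but it is a different argument and would again have to confront the indefiniteness issue in establishing the contraction constant is strictly less than one.
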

\begin{remark}
Note that the theorem covers in particular the case $d/n\rightarrow 0$. It follows from the proof that 
$$\arrowvert e_{d,n}^\circ(z)\arrowvert\le \frac{\| R_{d,n} \|_{S_\infty}}{\Im(z)}, \ \ z\in \C^+.$$
Due to $R_{d,n}-S_{d,n}=T_{d,n}$, this implies that the Stieltjes transforms $m_{d,n}^\circ$ approach those of the spectral measures of $T_{d,n}$ as $d/n\to 0$. That is, an effect caused by missing observations appears asymptotically only in the high-dimensional scenario $\liminf_d d/n>0$. 
\end{remark}
\noindent The equation \eqref{eq: def m-circ} characterizes uniquely the approximating spectral measure via its Stieltjes transform. Without missing observation, i.e. $p_{i,d,n}=1$, the solution of \eqref{eq: def m-circ} coincides with the solution to the Mar\v{c}enko-Pastur equation
\begin{align*}
m_{d,n}^\circ(z)=\frac{1}{d}\sum_{i=1}^d\frac{1}{T_{ii,d,n}\left(1-\frac{d}{n}-\frac{d}{n}z\cdot m_{d,n}^\circ(z)\right)-z}.
\end{align*}
The difference in the representation results from the fact that the spectra of $$T_{d,n}^{1/2}X_{d,n}^{~}X_{d,n}^\ast T_{d,n}^{1/2}\ \ \text{ and }\ \ X_{d,n}^\ast T_{d,n}^{~}X_{d,n}^{~} $$ are identical up to $\vert d-n\vert$ zero eigenvalues, which is used in the classical analysis. Except for special cases, this simplification is not possible in the missing at random scenario. \\
It is well-known that the Stieltjes transform of the Mar\v{c}enko-Pastur law  with parameters $\left(y,\sigma^2/p_0\right)$ is the unique solution to
$$
s(z)=\left(\frac{\sigma^2}{p_0}\cdot\frac{1}{1+\frac{\sigma^2}{p_0}ys(z)}-z\right)^{-1}
$$
from $\C^+\to \C^+$. In the special case $T_{d,n}=\sigma^2I_{d\times d}$ and $p_{d,n}=(p_0,\dots, p_0)\in(0,1)^d$, we have
$$
m_{d,n}^{\circ}\left(z-\sigma^2\frac{1-p_0}{p_0}\right)=\left(\frac{\sigma^2}{p_0}\frac{1}{1+\frac{d}{n}\frac{\sigma^2}{p_0}m_{d,n}^{\circ}\left(z-\sigma^2\frac{1-p_0}{p_0}\right)}-z\right)^{-1}.
$$
Hence, $\mu_{d,n}^\circ$ is the Mar\v{c}enko-Pastur law $\mu^{MP}_{\frac{d}{n},\frac{\sigma^2}{p_0}}$ shifted by $\sigma^2\frac{1-p_0}{p_0}$ to the left.
\begin{corollary}\label{cor: 1}
Grant the conditions of Theorem \ref{theorem: main}. If $p_{i,d,n}=p_0>0$ for $i=1,\dots,d$ and $d,n\in\N$ and $T_{d,n}=\sigma^2I_{d\times d}$, $\sigma^2>0$, we obtain
$$\mu_{d,n}\Longrightarrow   \mu^{\text{MP}}_{y,\frac{\sigma^2}{p_0}}\star \delta_{-\frac{1-p_0}{p_0}\sigma^2}\ \ \ a.s.$$
as $d\rightarrow\infty$ and $d/n\rightarrow y>0$. Eventually, as $ y<1$,
\begin{align*}
\limsup_d \lambda_{\min}\left(\hat  \Xi_{d,n}\right)<0 \ \ \text{a.s.~ ~ if }\ \ p_0<1-(1-\sqrt{y})^2.
\end{align*}

\end{corollary}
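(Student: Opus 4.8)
The plan is to derive both assertions from Theorem~\ref{theorem: main} together with the explicit identification of $\mu_{d,n}^\circ$ carried out in the display preceding the corollary. In the present setting $R_{d,n}=(\sigma^2/p_0)I_{d\times d}$ and $S_{d,n}=\sigma^2\tfrac{1-p_0}{p_0}I_{d\times d}$, so $\sup_d\Arrowvert R_{d,n}\Arrowvert_{S_\infty}=\sigma^2/p_0<\infty$ and Theorem~\ref{theorem: main} applies. Writing $c=\sigma^2\tfrac{1-p_0}{p_0}$ and letting $s_{y'}$ denote the $\C^+\!\to\C^+$ solution of the Mar\v{c}enko-Pastur fixed point equation with parameters $(y',\sigma^2/p_0)$, the shift identity stated before the corollary reads $m_{d,n}^\circ(z-c)=s_{d/n}(z)$. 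First I would invoke the standard fact that the Mar\v{c}enko-Pastur law depends weakly continuously on its ratio parameter, so that $s_{d/n}(z)\to s_y(z)$ pointwise on $\C^+$ as $d/n\to y$; consequently $m_{d,n}^\circ(z)=s_{d/n}(z+c)$ converges on $\C^+$ to the Stieltjes transform of $\mu^{\text{MP}}_{y,\sigma^2/p_0}\star\delta_{-c}$. Since pointwise convergence of Stieltjes transforms to the Stieltjes transform of a probability measure is equivalent to weak convergence of the measures, $\mu_{d,n}^\circ\Longrightarrow\mu^{\text{MP}}_{y,\sigma^2/p_0}\star\delta_{-c}$, and combining this deterministic limit with the almost sure statement $\mu_{d,n}^\circ-\mu_{d,n}\Longrightarrow 0$ from Theorem~\ref{theorem: main} gives $\mu_{d,n}\Longrightarrow\mu^{\text{MP}}_{y,\sigma^2/p_0}\star\delta_{-c}$ a.s., which is the first assertion.

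For the eigenvalue assertion I would then locate the left endpoint of the support of the limit $\nu:=\mu^{\text{MP}}_{y,\sigma^2/p_0}\star\delta_{-c}$. Since $y<1$, the law $\mu^{\text{MP}}_{y,\sigma^2/p_0}$ has no atom at $0$ and support $[a_0,b_0]$ with $a_0=\tfrac{\sigma^2}{p_0}(1-\sqrt y)^2$, its density being strictly positive on $(a_0,b_0)$; convolution with $\delta_{-c}$ only translates the support, so $\nu$ has left support endpoint
$$
\ell\ =\ a_0-c\ =\ \frac{\sigma^2}{p_0}(1-\sqrt y)^2-\frac{1-p_0}{p_0}\sigma^2\ =\ \frac{\sigma^2}{p_0}\Bigl((1-\sqrt y)^2-(1-p_0)\Bigr),
$$
and $\nu$ gives positive mass to $(\ell,\ell+\eta)$ for every $\eta>0$. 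The number $\ell$ is strictly negative exactly when $p_0<1-(1-\sqrt y)^2$.

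Assume $p_0<1-(1-\sqrt y)^2$, so $\ell<0$, and fix $\delta\in(0,-\ell)$. On the probability-one event on which $\mu_{d,n}\Longrightarrow\nu$, the portmanteau theorem applied to the open half-line $(-\infty,-\delta)$ yields $\liminf_{d\to\infty}\mu_{d,n}\bigl((-\infty,-\delta)\bigr)\ge\nu\bigl((-\infty,-\delta)\bigr)>0$, because $(\ell,-\delta)$ is a non-degenerate subinterval of $\supp\nu$. Hence for all large $d$ the matrix $\hat\Xi_{d,n}$ has at least one eigenvalue below $-\delta$, so $\limsup_d\lambda_{\min}(\hat\Xi_{d,n})\le-\delta<0$ a.s. The only genuinely delicate point is the convergence in the first paragraph — verifying that the solutions of the finite-$d$ fixed point equations converge pointwise on $\C^+$ to the Stieltjes transform of the limiting Mar\v{c}enko-Pastur law and translating this through the shift by $c$; the rest is soft analysis (portmanteau) plus the elementary edge computation. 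Note in particular that this direction does not require the sharp extreme-eigenvalue asymptotics of Theorem~\ref{theorem: extreme}, since a positive \emph{fraction} of eigenvalues is forced below $-\delta$ and hence no control of a vanishing fraction of possible outliers is needed.
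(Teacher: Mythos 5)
Your proof is correct and matches the paper's argument in substance: identify $\mu_{d,n}^\circ$ as the shifted Mar\v{c}enko--Pastur law (using the Stieltjes-transform display preceding the corollary), pass to the limit $d/n\to y$, combine with the almost sure weak approximation $\mu_{d,n}-\mu_{d,n}^\circ\Longrightarrow 0$ from Theorem~\ref{theorem: main}, and read off the sign of the left support edge. The only cosmetic difference is that you establish the deterministic limit via pointwise convergence of Stieltjes transforms and then make the portmanteau step explicit, whereas the paper appeals directly to the density formula \eqref{eq: mp dichte} and leaves the eigenvalue conclusion implicit; mathematically it is the same route.
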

\noindent In other words, under the missing at random scenario where each component is observed with the same probability $p_0$, the limiting spectral distribution is a Mar\v{c}enko-Pastur law shifted by $\sigma^2(1-p_0)/p_0$ to the left. Eventually, the sample covariance matrix is not positive definite if $p_0$ is smaller than
$$
1-\left(1-\sqrt{y}\right)^2.
$$ 
For the estimator $\hat\Sigma_{d,n}$ we even determine the almost sure limit of the extremal eigenvalues. %First we formulate an almost sure convergence theorem for the extreme eigenvalues of $\hat\Sigma_{d,n}$ in the null case. 

\begin{theorem}\label{theorem: extreme}
Grant the conditions of Corollary \ref{cor: 1} let additionally $\E X_{11}^4<\infty$ and $\varepsilon_{d,n}\in\R^{d\times n}$ be the upper left corner of a double array $(\varepsilon(i,k))_{i,k\in\N}$ of iid Bernoulli variables with parameter $p_0$.  Assume that $\E Y_{d,n}=0$.Then, if $0<y<1$,
\begin{align*}
\lim_{d\to\infty} \lambda_{\min}\left(\hat  \Sigma_{d,n}\right)&= \frac{\sigma^2}{p_0}\left(1-\sqrt{y}\right)^2-\frac{1-p_0}{p_0}\sigma^2 \ \ \text{a.s.},\ \ \ and\\
%\end{align*}
%and
%\begin{align*}
\lim_{d\to\infty} \lambda_{\max}\left(\hat  \Sigma_{d,n}\right)&= \frac{\sigma^2}{p_0}\left(1+\sqrt{y}\right)^2-\frac{1-p_0}{p_0}\sigma^2 \ \ \text{a.s.}
\end{align*}
\end{theorem}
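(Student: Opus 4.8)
The plan is to reduce the statement to the classical null-case theorems for completely observed data, by showing that $\hat\Sigma_{d,n}$ — once the fluctuations of the random denominators $N_{ij,d,n}$ are stripped off — agrees, up to an operator-norm error tending to zero almost surely, with the sample covariance matrix of a rescaled fully observed array shifted by a multiple of the identity. Write $W_{d,n}\in\R^{d\times n}$ for the matrix with entries $\varepsilon_{ik,d,n}\sigma X_{ik}$, set $M_{d,n}=\tfrac1n W_{d,n}W_{d,n}^\ast$ and $D_{d,n}=(n/N_{ij,d,n})_{ij}$, so that $\hat\Sigma_{d,n}=D_{d,n}\circ M_{d,n}$. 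Let $J\in\R^{d\times d}$ be the all-ones matrix, put $\bar D=p_0^{-2}J-\tfrac{1-p_0}{p_0^2}I_{d\times d}$ and $\Delta_{d,n}=D_{d,n}-\bar D$; then $\bar D\circ M_{d,n}=p_0^{-2}M_{d,n}-\tfrac{1-p_0}{p_0^2}\diag(M_{d,n})$. Finally, $Z_{ik}=\varepsilon_{ik,d,n}X_{ik}/\sqrt{p_0}$ defines a double array of i.i.d.\ centered unit-variance variables with $\E Z_{11}^4=\E X_{11}^4/p_0<\infty$, and $p_0^{-2}M_{d,n}=\tfrac{\sigma^2}{p_0}\cdot\tfrac1n Z_{d,n}Z_{d,n}^\ast$. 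Collecting terms,
\begin{align*}
&\hat\Sigma_{d,n}-\Big(\tfrac{\sigma^2}{p_0}\cdot\tfrac1n Z_{d,n}Z_{d,n}^\ast-\tfrac{1-p_0}{p_0}\sigma^2 I_{d\times d}\Big)\\
&\qquad=\ -\tfrac{1-p_0}{p_0^2}\big(\diag(M_{d,n})-p_0\sigma^2 I_{d\times d}\big)+\Delta_{d,n}\circ M_{d,n}.
\end{align*}
Since $\E Z_{11}^4<\infty$ and $y\in(0,1)$, the results of \cite{Bai1988} and \cite{Bai1993} give $\lambda_{\max}(\tfrac1n Z_{d,n}Z_{d,n}^\ast)\to(1+\sqrt y)^2$ and $\lambda_{\min}(\tfrac1n Z_{d,n}Z_{d,n}^\ast)\to(1-\sqrt y)^2$ almost surely; by Weyl's inequality it therefore suffices to prove that the operator norm of the right-hand side above vanishes almost surely.

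The first summand is handled by elementary concentration. Bernstein's inequality and a union bound over the $O(d^2)$ index pairs yield, eventually almost surely, $\max_{i\neq j}|N_{ij,d,n}-np_0^2|+\max_i|N_{ii,d,n}-np_0|\lesssim\sqrt{n\log n}$ and $\min_{i,j}N_{ij,d,n}\ge\tfrac12 np_0^2$, whence also $\max_{i,j}|(\Delta_{d,n})_{ij}|\lesssim\sqrt{\log n/n}$. A standard truncation argument using $\E X_{11}^4<\infty$, together with Borel--Cantelli on the common probability space carrying the two double arrays, gives $\max_i|(M_{d,n})_{ii}-p_0\sigma^2|\to 0$ almost surely, so $\|\diag(M_{d,n})-p_0\sigma^2 I_{d\times d}\|_{S_\infty}\to 0$.

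The heart of the proof, and the step I expect to be the main obstacle, is
\[
\|\Delta_{d,n}\circ M_{d,n}\|_{S_\infty}\ \longrightarrow\ 0\qquad\text{almost surely.}
\]
Every entry of $\Delta_{d,n}\circ M_{d,n}$ is of order $\sqrt{\log n/n}$, but a Hadamard product may inflate the operator norm, so crude norm inequalities fail (a Frobenius bound, for instance, only yields $O(1)$) and the full structure must be used. I would condition on $\varepsilon_{d,n}$: then $\Delta_{d,n}$ is deterministic with uniformly small entries, and $M_{d,n}=\tfrac1n\sum_{k=1}^n v_kv_k^\ast$ is a sum of independent rank-one matrices, $v_k$ being the $k$-th column of $W_{d,n}$. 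Because $\Delta_{d,n}\circ(v_kv_k^\ast)=\diag(v_k)\,\Delta_{d,n}\,\diag(v_k)$, this exhibits $\Delta_{d,n}\circ M_{d,n}$ as a diagonal conditional mean of order $\sqrt{\log n/n}$ plus a conditionally centered sum of $n$ independent matrices. After truncating the $X_{ik}$ at a level $\delta_n\sqrt n$ with $\delta_n\downarrow 0$ slowly, a matrix Bernstein (or high-moment trace) estimate applies; the point is that, since the entries of $\Delta_{d,n}$ are uniformly $\lesssim\sqrt{\log n/n}$ and its operator norm stays of at most polylogarithmic order, both the conditional variance proxy and the size of the largest summand are $o(1)$, forcing the conditional operator norm to $0$. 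A final Borel--Cantelli step over $d$ promotes this to almost sure convergence. Carrying out this Hadamard-type operator-norm estimate rigorously, with the truncation level calibrated to the finite-fourth-moment assumption, is where the bulk of the work lies and has no counterpart in the classical complete-data analysis.

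Granting the three displays, the right-hand side of the identity above vanishes in operator norm almost surely, and Weyl's inequality combined with the above limits for $\tfrac1n Z_{d,n}Z_{d,n}^\ast$ yields the two asserted limits. (As a cross-check, the weak convergence of the spectral measure in Corollary \ref{cor: 1} already pins $\liminf_d\lambda_{\max}(\hat\Sigma_{d,n})$ and $\limsup_d\lambda_{\min}(\hat\Sigma_{d,n})$ to the stated edges, so the genuinely new ingredient is precisely the ``no eigenvalue escapes the bulk'' estimate supplied above.)
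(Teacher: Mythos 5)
Your decomposition is exactly the one the paper uses, in slightly different notation: your $\bar D$ is $W_{d,n}=w_{d,n}w_{d,n}^\ast+\diag(W_{d,n}-w_{d,n}w_{d,n}^\ast)$, your $\Delta$ is $\hat W_{d,n}-W_{d,n}$, and the identity you display is precisely the splitting in Section \ref{section: thm 2}. The main term $p_0^{-2}M_{d,n}$ via Bai--Yin (\cite{Bai1988}, \cite{Bai1993}) and the diagonal correction via Marcinkiewicz--Zygmund are handled the same way as in the paper, and you correctly isolate $\|\Delta_{d,n}\circ M_{d,n}\|_{S_\infty}\to 0$ as the crux.

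The gap is in how you propose to prove that crux, and it is genuine. First, the claim that $\|\Delta_{d,n}\|_{S_\infty}$ is ``of at most polylogarithmic order'' is false: Taylor-expanding $n/N_{ij}-W_{ij}$ around $\E N_{ij}/n$ shows that $\Delta_{d,n}$ is, to first order, a scalar multiple of $\tfrac1n\varepsilon_{d,n}\varepsilon_{d,n}^\ast-p_0^2J-p_0(1-p_0)I$, i.e.\ of a centered Wishart fluctuation in the regime $d\asymp n$, whose operator norm converges to a positive constant (a Bai--Yin-type limit for the matrix $\tilde\varepsilon$), not to zero or to a polylog. So $\|\Delta_{d,n}\|_{S_\infty}=O(1)$, and even this needs an argument; the entrywise bound $\sqrt{\log n/n}$ only gives the divergent row-sum bound $\sqrt{d\log n}$. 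Second, and more fundamentally, even granting $\|\Delta_{d,n}\|_{S_\infty}=O(1)$, the maximal summand in $\frac1n\sum_k\diag(v_k)\Delta_{d,n}\diag(v_k)$ after truncation at $\delta_{d,n}\sqrt n$ is only bounded by $\sigma^2\delta_{d,n}^2\,\|\Delta_{d,n}\|_{S_\infty}=O(\delta_{d,n}^2)$. A matrix Bernstein tail gives, for fixed $t>0$, a bound of order $d\exp(-ct/\delta_{d,n}^2)$, which is summable in $d$ only if $\delta_{d,n}\lesssim(\log n)^{-1/2}$. But Bai's truncation lemma (Lemma 2.2 in \cite{Bai1988}), the only truncation available under $\E X_{11}^4<\infty$, produces a sequence $\delta_{d,n}\downarrow 0$ whose rate cannot be prescribed and may be arbitrarily slow; you cannot force the $L\log d$ term of Bernstein to vanish. ``Summand is $o(1)$'' is true but too coarse. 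The paper's Proposition \ref{proposition: sigma} avoids this issue entirely: it expands $\E\tr\big(\ind_{E}(n^{-1}A\circ((\tilde X\circ B)(\tilde X\circ B)^\ast))^{l}\big)$, uses the pointwise bound $\max|A_{ij}|\max B_{ik}^2<\alpha$ to pull the weight factors out, and observes that the remaining raw moment sum is exactly the one bounded by $\eta^{l}$ in the proof of Theorem 3.1 of \cite{Bai1988}. That combinatorial bound is calibrated to $\delta_{d,n}$ through the choice $\delta_{d,n}^{1/6}l_{d,n}/\log n\to 0$, which is precisely the flexibility a fixed Bernstein-type inequality lacks. If you replace your matrix Bernstein step by this unconditional trace-moment argument (your parenthetical ``or high-moment trace'' remark points in the right direction), the proof goes through and matches the paper's.
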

\noindent The limit of the smallest eigenvalue is always smaller than in the completely observed case $p_0=1$, whereas the largest eigenvalue is always larger.
In the limiting case $y\to 0$ both expressions on the right-hand side reduce to $\sigma^2$ as in the completely observed classical case, independently of $p_0$.\\
As in Theorem 1 of \cite{Bai1993} the existence of the fourth moment is necessary for the above Theorem to hold. The proof of the necessity is a straightforward adaption of the arguments in \cite{Bai1988}.\\
The characterization of positive definiteness in the null case under the missing at random scenario is an immediate corollary of Theorem \ref{theorem: extreme}. 
\begin{corollary}
Under the condition of Theorem \ref{theorem: extreme},
\begin{align*}
\lim_{d\rightarrow\infty} \lambda_{\min}\left(\hat  \Sigma_{d,n}\right)<0 \ \ &\text{a.s.~ ~ if }\ \ p_0<1-(1-\sqrt{y})^2,\ \ \ \text{and}\\
%\end{align*}
%and
%\begin{align*}
\lim_{d\rightarrow\infty} \lambda_{\min}\left(\hat  \Sigma_{d,n}\right)>0 \ \ &\text{a.s.~ ~ if }\ \ p_0>1-(1-\sqrt{y})^2.
\end{align*}

\end{corollary}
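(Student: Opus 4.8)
The plan is to obtain the corollary as a direct sign computation from the explicit limit furnished by Theorem~\ref{theorem: extreme}. That theorem gives, under its hypotheses and for $0<y<1$,
\[
\lim_{d\to\infty}\lambda_{\min}\big(\hat\Sigma_{d,n}\big)\ =\ \frac{\sigma^2}{p_0}\left(1-\sqrt{y}\right)^2-\frac{1-p_0}{p_0}\,\sigma^2\qquad\text{a.s.},
\]
so the whole matter reduces to deciding when the right-hand side is positive or negative.

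First I would pull out the strictly positive factor $\sigma^2/p_0$, which does not affect the sign, and rewrite
\[
\frac{\sigma^2}{p_0}\left(1-\sqrt{y}\right)^2-\frac{1-p_0}{p_0}\,\sigma^2\ =\ \frac{\sigma^2}{p_0}\Big(\left(1-\sqrt{y}\right)^2-(1-p_0)\Big)\ =\ \frac{\sigma^2}{p_0}\Big(p_0-\big(1-(1-\sqrt{y})^2\big)\Big).
\]
Reading off the sign of the bracket then yields both assertions: the almost sure limit of $\lambda_{\min}(\hat\Sigma_{d,n})$ is negative exactly when $p_0<1-(1-\sqrt{y})^2$ and positive exactly when $p_0>1-(1-\sqrt{y})^2$. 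Since positive definiteness of $\hat\Sigma_{d,n}$ is equivalent to $\lambda_{\min}(\hat\Sigma_{d,n})>0$, this is the claimed characterization, and the almost sure qualifier is inherited verbatim from Theorem~\ref{theorem: extreme}.

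Because everything of substance is already contained in Theorem~\ref{theorem: extreme}, there is essentially no obstacle here; the proof is a one-line rearrangement. The only points worth recording are that for every $y\in(0,1)$ the threshold $1-(1-\sqrt{y})^2$ genuinely lies in $(0,1)$, so that both regimes are non-empty and correspond to admissible observation probabilities, and that the borderline value $p_0=1-(1-\sqrt{y})^2$ is deliberately excluded, the limit then being $0$ and eventual definiteness not being decided by this first-order asymptotics alone.
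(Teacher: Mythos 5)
Your proof is correct and matches the paper's approach exactly: the paper states this corollary is "an immediate corollary of Theorem \ref{theorem: extreme}," and your one-line sign computation $\frac{\sigma^2}{p_0}\big((1-\sqrt{y})^2-(1-p_0)\big)$ is precisely that observation. Your remarks about the threshold lying in $(0,1)$ and the excluded borderline case are accurate and appropriate.
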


\section{Proof of Theorem \ref{theorem: main}, Part I}\label{section: part 1}
$$ 
\text{\bf Reduction to the form $\boldsymbol{\frac{1}{n}R_{d,n}^{1/2}Z_{d,n}^{~}Z_{d,n}^\ast R_{d,n}^{1/2}-S_{d,n}^{~}}$}
$$
With the notation $$\bar T_{d,n}=\frac{1}{n}R_{d,n}^{1/2}Z_{d,n}^{~}Z_{d,n}^\ast R_{d,n}^{1/2}-S_{d,n}^{~}$$
and 
$$Z_{d,n}\in\R^{d\times n},\ \ Z_{ik,d,n}=\frac{X_{ik,d,n}\varepsilon_{ik,d,n}}{p_{i,d,n}^{1/2}}, \ \ i=1,\dots,d, \ \ k=1,\dots,n,$$ let
$\bar\mu_{d,n}$ be the spectral measure of $\bar T_{d,n}$. 
The aim of this section is to show that the spectral distributions $\mu_{d,n}$ of $\hat \Xi_{d,n}$ may be approximated by $\bar \mu_{d,n}$. 
\begin{proposition}\label{proposition: prop}
Grant the conditions of Subsection \ref{sec: pre}. Then
$$
d_L\left(\bar \mu_{d,n},\mu_{d,n}\right)\longrightarrow 0\ \  \text{a.s.}
$$
\end{proposition}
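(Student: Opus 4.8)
The plan is to compare $\hat\Xi_{d,n}$ with $\bar T_{d,n}$ by showing that they differ, up to controlled perturbations, by matrices of vanishing rank or vanishing operator norm, and then invoke the standard rank inequality for the L\'evy distance (if $A,B$ are Hermitian, then $d_L(\mu^A,\mu^B)\le \frac1d\rank(A-B)$) together with the fact that $d_L(\mu^A,\mu^B)\to 0$ whenever $\|A-B\|_{S_\infty}\to 0$. Concretely, recall that $\hat\Sigma_{ij}=\frac1{N_{ij}}\sum_{k\in\NN_{ij}}Y_{ik}Y_{jk}$ with $N_{ij}=1\vee\#\NN_{ij}$, whereas the $(i,j)$ entry of $\frac1n Z Z^\ast$ equals $\frac1n\sum_k \frac{\varepsilon_{ik}\varepsilon_{jk}}{\sqrt{p_ip_j}}X_{ik}X_{jk}$, so after multiplying by $T_{ii}^{1/2}T_{jj}^{1/2}$ (the diagonal structure of $T_{d,n}$) the $(i,j)$ entry of $\frac1n R^{1/2}ZZ^\ast R^{1/2}$ is $\frac1n\sum_k \frac{\varepsilon_{ik}\varepsilon_{jk}}{p_ip_j}Y_{ik}Y_{jk}$ (in the centered $\hat\Sigma$ case). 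The first step is thus to show that replacing the random normalisation $N_{ij}/n$ by its "mean" $p_ip_j$ — equivalently, $1/N_{ij}$ by $1/(np_ip_j)$ — produces an asymptotically negligible perturbation.

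The key steps, in order, would be: (1) A concentration/law-of-large-numbers argument for $N_{ij,d,n}$: since $N_{ij}=\sum_k \varepsilon_{ik}\varepsilon_{jk}$ (ignoring the $1\vee$ which only matters on a negligible set) is a sum of $n$ iid Bernoulli($p_ip_j$) variables, $N_{ij}/(np_ip_j)\to 1$, and by a union bound over the $O(d^2)$ pairs together with the tightness assumption on $\mu^{w_{d,n}}$ (which keeps $p_i$ bounded away from $0$ for all but a vanishing fraction of coordinates) one gets uniform control. The degenerate coordinates where $p_i$ is tiny must be handled separately: these form a set of indices of size $o(d)$, so removing the corresponding rows and columns changes $\mu_{d,n}$ only by $o(1)$ in L\'evy distance via the rank inequality. (2) Writing the difference $\hat\Sigma_{d,n}-\frac1n R^{1/2}ZZ^\ast R^{1/2}$ as a Hadamard product of $\frac1n R^{1/2}ZZ^\ast R^{1/2}$ (off-diagonal part) with the matrix $\big(np_ip_j/N_{ij}-1\big)_{ij}$, and bounding its operator norm — here one uses that $\|A\circ B\|_{S_\infty}\le \|A\|_{S_\infty}\max_i\sqrt{\sum_j|B_{ij}|^2}$ or a cruder Schur-type bound, combined with the a.s. boundedness of $\|\frac1n R^{1/2}ZZ^\ast R^{1/2}\|_{S_\infty}$ (which follows from $\sup_d\|R_{d,n}\|_{S_\infty}<\infty$ and standard largest-eigenvalue bounds for $ZZ^\ast/n$, noting the entries of $Z$ have bounded variance but only finite second moment — so a truncation may be needed here too). (3) Handling the centering: $\bar Y_i$ versus $\E Y_i$ contributes a rank-one correction per coordinate pattern, but more carefully $\hat T$ versus $\hat\Sigma$ differ by a matrix of rank $O(1)$ (the mean subtraction), so by the rank inequality this is $o(1)$ in $d_L$; since the proposition is stated for $\hat\Xi$ (both estimators), one reduces the $\hat T$ case to the $\hat\Sigma$ case this way. (4) Assembling: $d_L(\mu_{d,n},\bar\mu_{d,n})\le d_L$(after removing degenerate coords)$+\,\frac1d\rank(\text{centering correction})+C\|\text{Hadamard error}\|_{S_\infty}\to 0$ a.s.

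The main obstacle I expect is the uniform (over all $O(d^2)$ index pairs $i,j$) control of the multiplicative errors $np_ip_j/N_{ij}-1$ in a form strong enough to bound the \emph{operator} norm — not merely the entrywise size — of the Hadamard-product error matrix, while simultaneously dealing with (a) the coordinates with very small $p_i$, where $N_{ij}$ may be $O(1)$ or even forced up to $1$ by the $1\vee$ truncation, making the relative error large, and (b) the fact that the $X_{ik}$ have only finite second moment (and no assumed higher moments in Theorem \ref{theorem: main}), so that $\|\frac1n ZZ^\ast\|_{S_\infty}$ is not a.s.\ bounded without a truncation step. The resolution is presumably a two-scale split of the coordinate set according to a threshold on $p_i$ (the tight set gets the operator-norm Hadamard bound; the small-$p_i$ set, of size $o(d)$, gets the rank bound) plus a truncation of the $X_{ik}$ at a slowly growing level, with the truncation error controlled again by the rank inequality and Borel--Cantelli.
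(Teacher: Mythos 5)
Your reduction plan correctly identifies the two genuine obstacles (coordinates with small $p_i$, lack of moments beyond the second), and steps (1) and the coarse architecture in (4) are in the spirit of the paper's proof. Two of your concrete steps, however, do not go through.

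\textbf{The centering terms are not rank $O(1)$.} You reduce $\hat T$ to $\hat\Sigma$ by saying the mean-subtraction contributes a finite-rank correction. Without missing observations this is true, but here $\hat T_{ij}-\hat\Sigma_{ij}$ involves $\bar Y_j\frac{1}{N_{ij}}\sum_{k\in\NN_{ij}}Y_{ik}$, and the index set $\NN_{ij}$ couples $i$ and $j$, so the difference is \emph{not} a sum of a few dyads. In the matrix rearrangement of Section~4 the offending terms are
$$
-\frac{1}{n}\hat W_{d,n}\circ\Big((\hat M_{d,n}\circ\varepsilon_{d,n})(Y_{d,n}\circ\varepsilon_{d,n})^\ast\Big)-\frac{1}{n}\hat W_{d,n}\circ\Big((Y_{d,n}\circ\varepsilon_{d,n})(\hat M_{d,n}\circ\varepsilon_{d,n})^\ast\Big),
$$
and the Hadamard factors $\varepsilon_{d,n}$ and $\hat W_{d,n}$ destroy the rank-one structure of $\hat M_{d,n}$ (whose columns are all equal). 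Only the purely quadratic term $\frac1n \hat W\circ((\hat M\circ\varepsilon)(\hat M\circ\varepsilon)^\ast)$ collapses to the rank-one $\hat m\hat m^\ast$, and only on the event that every $\NN_{ij}$ is nonempty. Removing the two cross terms is a nontrivial part of the paper's argument (Step~VI of Appendix~A), handled by the trace bound $d_L^3(\mu^A,\mu^B)\le\frac1d\tr\{(A-B)(A-B)^\ast\}$ combined with an extra truncation of the row sums $\sum_l|Y_{il}|$ and a Bernstein/Hoeffding tail analysis, not by a rank count.

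\textbf{The Schur multiplier route for the $\hat W\to W$ replacement is quantitatively too weak.} The inequality $\|A\circ B\|_{S_\infty}\le\|A\|_{S_\infty}\cdot\max_i(\sum_j|B_{ij}|^2)^{1/2}$ that you invoke is valid, but it is useless here: with $B_{ij}=np_ip_j/N_{ij}-1$ the entrywise error is $O(\sqrt{\log n/n})$ uniformly, so the row $\ell_2$-norm of $B$ is of order $\sqrt{d\log n/n}$, which stays of order $\sqrt{\log n}$ when $d/n$ is bounded away from zero and does not tend to zero. You would therefore prove divergence, not convergence. The fix is precisely to abandon the operator norm in favor of the trace (Frobenius) bound for $d_L^3$: it decouples pointwise as
$$
\frac{1}{dn^2}\sum_{i,j}\big(\hat W_{ij}-W_{ij}\big)^2\Big(\sum_k Y_{ik}Y_{jk}\varepsilon_{ik}\varepsilon_{jk}\Big)^2\ \le\ \max_{i,j}\big|\hat W_{ij}-W_{ij}\big|^2\cdot\frac{1}{d}\Big\|\tfrac{1}{n}(Y\circ\varepsilon)(Y\circ\varepsilon)^\ast\Big\|_{S_2}^2,
$$
and the second factor is $O(1)$ after truncation while the first vanishes. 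Even then, getting almost sure convergence under finite second moments forces the variance/moment combinatorics of Step~V (the $I_1,I_2$ split) plus Borel--Cantelli; the simple entrywise concentration you sketch only gives convergence in probability at each step. So switch to the trace bound, and expect to do real work both on the $\hat W$ replacement and on the centering cross terms.
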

\begin{remark}
Corollary \ref{cor: 1} can be equally deduced from Proposition \ref{proposition: prop}. Since in that case $S_{d,n}$ is a multiple of identity, the eigenvalues satisfy
$$\lambda_i\left(\bar T_{d,n}\right)=\lambda_i\left(\frac{1}{n}R_{d,n}^{1/2}Z_{d,n}^{~}Z_{d,n}^\ast R_{d,n}^{1/2}\right)-\frac{1-p_0}{p_0}\sigma^2,\ \ i=1,\dots,d.$$
For the matrix
$$\frac{1}{n}R_{d,n}^{1/2}Z_{d,n}^{~}Z_{d,n}^\ast R_{d,n}^{1/2}$$
it is well-known (see e.g. \cite{Silverstein1995}) that the spectral distribution converges weakly  to $\mu^{MP}_{y,\sigma^2/p_0}$ almost surely as $ d/n \rightarrow y>0$.
\end{remark}
\noindent The proof of Proposition \ref{proposition: prop} is postponed to Appendix \ref{A}. At this place we give a sketch of the proof. Subsequently we restrict our attention to the estimator $\hat T_{d,n}$. The proof for $\hat \Sigma_{d,n}$ is just a simplified version.\\
The proof of Proposition \ref{proposition: prop} is subdivided into eight steps. In each step %the triplet $$\left(X_{d,n},\varepsilon_{d,n},T_{d,n}\right)_{d,n}$$ 
$\hat T_{d,n}$ is modified in a way which does not affect its spectral distribution asymptotically. In order to simplify the notation each modification of $\hat T_{d,n}$ from one step will be again denoted by $\hat T_{d,n}$ in the next step. Within the proof denote
$$\hat W_{d,n}\in\R^{d\times d},\ \hat W_{ij,d,n}=\frac{n}{N_{ij,d,n}},$$
$$W_{d,n}\in\R^{d\times d}, \ W_{ij,d,n}=\frac{n}{\E \#\NN_{ij,d,n}}.$$
Before we start with the description of the proof we rearrange the entries $\hat T_{ij,d,n}$ as follows
\begin{align*}
&\frac{1}{N_{ij,d,n}}\sum_{k\in\NN_{ij,d,n}}\left(Y_{ik,d,n}-\bar{Y}_{i,d,n}\right)\left(Y_{jk,d,n}-\bar{Y}_{j,d,n}\right)\\
&=\frac{1}{N_{ij,d,n}}\sum_{k\in\NN_{ij,d,n}}\Big(\left(Y_{ik,d,n}-\E Y_{ik,d,n}\right)-\left(\bar{Y}_{i,d,n}-\E Y_{ik,d,n}\right)\Big)\\
&\hspace{3cm}\times\Big(\left(Y_{jk,d,n}-\E Y_{jk,d,n}\right)-\left(\bar{Y}_{j,d,n}-\E Y_{jk,d,n}\right)\Big)\\
%&=\frac{1}{N_{ij}}\sum_{k\in\NN_{ij}}\left((Y_{ik}-\E Y_{ik})+\left(\E Y_{ik}-\frac{1}{N_{ii}}\sum_{l\in\NN_{ii}}Y_{il}\right)\right)\\
%&\hspace{5cm}\times\left((Y_{jk}-\E Y_{jk})+\left(\E Y_{jk}-\frac{1}{N_{jj}}\sum_{l\in\NN_{jj}}Y_{jl}\right)\right)\\
&=\frac{1}{N_{ij,d,n}}\sum_{k\in\NN_{ij,d,n}}\left[(Y_{ik,d,n}-\E Y_{ik,d,n})-\frac{1}{N_{ii,d,n}}\sum_{l\in\NN_{ii,d,n}}(Y_{il,d,n}-\E Y_{il,d,n})\right]\\
&\hspace{3cm}\times\left[(Y_{jk,d,n}-\E Y_{jk,d,n})-\frac{1}{N_{jj,d,n}}\sum_{l\in\NN_{jj,d,n}}(Y_{jl,d,n}-\E Y_{jl,d,n})\right].
%&=\frac{1}{\sum_k\varepsilon_{ki}\varepsilon_{kj}}\sum_k\bigg(\left(Y_{ki}-\E Y_{ki}\right)\left(Y_{kj}-\E Y_{kj}\right)\varepsilon_{ki}\varepsilon_{kj}-\frac{1}{\sum_l\varepsilon_{li}}\sum_lY_{li}\varepsilon_{li}\left(Y_{kj}-\E Y_{kj}\right)\varepsilon_{ki}\varepsilon_{kj}&\\
%&\hspace{5cm}-\left(Y_{ki}-\E Y_{ki}\right)\frac{1}{\sum_l\varepsilon_{lj}}\sum_lY_{lj}\varepsilon_{lj}\varepsilon_{ki}\varepsilon_{kj}+\frac{1}{\sum_l\varepsilon_{li}}\sum_lY_{li}\varepsilon_{li}\frac{1}{\sum_l\varepsilon_{lj}}\sum_lY_{lj}\varepsilon_{lj}\varepsilon_{ki}\varepsilon_{kj}\bigg)
\end{align*}
Therefore, we may assume without loss of generality $Y_{d,n}$ to be centered. Rewrite $\hat T_{d,n}$ in the following way
\begin{align*}
\hat T_{d,n}&=\frac{1}{n}\hat W_{d,n}\circ \Big((Y_{d,n}\circ\varepsilon_{d,n})(Y_{d,n}\circ\varepsilon_{d,n})^\ast\Big)-\frac{1}{n}\hat W_{d,n}\circ\left((\hat M_{d,n}\circ\varepsilon_{d,n})(Y_{d,n}\circ\varepsilon_{d,n})^\ast\right)\\
&\hspace{1.5cm}-\frac{1}{n}\hat W_{d,n}\circ \left((Y_{d,n}\circ\varepsilon_{d,n})(\hat M_{d,n}\circ\varepsilon_{d,n})^\ast\right)\\
&\hspace{1.5cm}+\frac{1}{n}\hat W_{d,n}\circ \left((\hat M_{d,n}\circ\varepsilon_{d,n})(\hat M_{d,n}\circ\varepsilon_{d,n})^\ast\right),
\end{align*}
where  
\begin{align}\label{eq: M}
\hat M_{d,n}=(\underbrace{\hat m_{d,n},...,\hat m_{d,n}}_{\text{n times}})\in\R^{d\times n} \ \ \ \text{with} \ \ \  \hat m_{i,d,n}=\frac{1}{N_{ii,d,n}}\sum_{k\in \NN_{ii,d,n}}Y_{ik,d,n}.\end{align}
Let us briefly describe the separate steps of the proof. The first three steps use the inequality
$$ d_K(\mu^A,\mu^B)\le \frac{1}{d}\rank(A-B)$$
for Hermitian matrices $A,B\in\R^{d\times d}$ in order to regularize certain rows of $\varepsilon_{d,n}$ for which the probability of observation $p_{i,d,n}$ is smaller than some given value $p_0>0$, to get rid of the additive term 
$$\frac{1}{n}\hat W_{d,n}\circ \left((\hat M_{d,n}\circ\varepsilon_{d,n})(\hat M_{d,n}\circ\varepsilon_{d,n})^\ast\right),$$
and to truncate the diagonal entries of $T_{d,n}$. Thereafter we want to make use of the inequality
\begin{equation}\label{equation: spur}
d_L^3\left(\mu^A,\mu^B\right)\le \frac{1}{d}\tr\big((A-B)(A-B)^\ast\big),
\end{equation}
where, in our case, $A$ and $B$ are two $d\times d$ random Hermitian matrices. In order to deduce almost sure convergence to $0$ of the right-hand side by means of the Borel-Cantelli lemma, truncation of the random variables $X_{ik,d,n}$ is necessary to guarantee the existence of higher order moments of the empirical spectral distribution of $\hat T_{d,n}$. This is realized in Step IV. In Step V the matrix $\hat W_{d,n}$ is replaced by its deterministic counterpart $W_{d,n}$ the evaluation of which is based on a sophisticated combinatorial analysis of moments. In Step VI a combination of both inequalities displayed above is applied. More precisely, an entry $Y_{ik,d,n}$ is preserved depending on whether its absolute row sum $\sum_{l}\left\arrowvert Y_{il,d,n}\right\arrowvert$ exceeds a certain value or not. The number of removed rows is asymptotically negligible while the remaining matrix is suitable for an application of \eqref{equation: spur}. Hereby, the matrices 
$$-\frac{1}{n}W_{d,n}\circ\left((\hat M_{d,n}\circ\varepsilon_{d,n})(Y_{d,n}\circ\varepsilon_{d,n})^\ast\right)\ \ \text{and}\ \ -\frac{1}{n}W_{d,n}\circ \left((Y_{d,n}\circ\varepsilon_{d,n})(\hat M_{d,n}\circ\varepsilon_{d,n})^\ast\right)$$
are removed from $\hat T_{d,n}$.
The form $$W_{d,n}=w_{d,n}w_{d,n}^\ast+\diag\left(W_{d,n}-w_{d,n}w_{d,n}^\ast\right)$$  is the motivation for replacing 
$$ \frac{1}{n} \diag\left(W_{d,n}-w_{d,n}w_{d,n}^\ast\right)\circ \left((Y_{d,n}\circ\varepsilon_{d,n})(Y_{d,n}\circ\varepsilon_{d,n})^\ast\right) $$
by its expectation in Step VII. Reverting finally the truncation Steps II, III, IV yields the claim.\\
\noindent In the next section $\hat \Xi_{d,n}$ denotes the matrix
$$\frac{1}{n}(w_{d,n}w_{d,n}^\ast)\circ \left((Y_{d,n}\circ\varepsilon_{d,n})(Y_{d,n}\circ\varepsilon_{d,n})^\ast\right)-S_{d,n}=\frac{1}{n}R_{d,n}^{1/2}Z_{d,n}^{~}Z_{d,n}^{\ast~} R_{d,n}^{1/2}-S_{d,n}$$
which is obtained in step VIII. Correspondingly, we write $\mu_{d,n}$ and $m_{d,n}$ for its spectral measure and the Stieltjes transform.
\begin{remark}
In the case of non-diagonal $T_{d,n}$ we  cannot reduce the sample covariance matrix with missing observations to the form
$$\frac{1}{n}R_{d,n}^{1/2}Z_{d,n}^{~}Z_{d,n}^{\ast~} R_{d,n}^{1/2}-S_{d,n}$$
but instead have to analyze the spectrum of
\begin{align*}
\frac{1}{n}(w_{d,n}w_{d,n}^\ast)\circ \Big((Y_{d,n}\circ \varepsilon_{d,n})(Y_{d,n}\circ\varepsilon_{d,n})^\ast\Big)-S_{d,n}=\frac{1}{n}\left(\tilde Y_{d,n}\circ\varepsilon\right)\left(\tilde Y_{d,n}\circ\varepsilon\right)^\ast-S_{d,n}
\end{align*}
with 
$$\tilde Y_{d,n}=\diag(w)Y_{d,n}.$$
Nevertheless, the arguments of Section \ref{section: part 2} can be modified at the cost of additional technical expenditure. We find that the ideas of the proof are much clearer for the diagonal special case and therefore omitted this extension due to length of the paper.
\end{remark}

\section{Proof of Theorem \ref{theorem: main}, Part II}\label{section: part 2} Note that, in general, the spectral analysis and limiting behavior of $\hat \Xi_{d,n}$ significantly differ from those of the matrix analyzed in \cite{Bai1995}.
By Proposition \ref{proposition: prop} as well as Lemma \ref{7.7} and Lemma \ref{7.8}, we continue to show that 
$$\left\arrowvert m_{d,n}(z)-m_{d,n}^\circ(z) \right\arrowvert\longrightarrow 0 \ \ \text{a.s.}$$
for all $z\in \C^+$. %Therefore, the entries of $X$ are truncated again such that they remain bounded uniformly in $d$ and $n$. 
Such type of convergence has been established in \cite{couillet2011} for
\begin{align*}
B_{d,n}^{1/2}X_{d,n}^{~}X_{d,n}^\ast B_{d,n}^{1/2}+A_{d,n}
\end{align*}
for positive semidefinite Hermitian matrices $A_{d,n},B_{d,n}\in \C^{d\times d}$. For the proof of Theorem \ref{theorem: main} we establish the weak approximation in case of the negative semidefinite matrix $A_{d,n}=-S_{d,n}$. This requires several changes in the arguments of \cite{couillet2011} due to the fact that the function 
$$z\mapsto -\frac{1}{z(1+m(z))}$$
is a Stieltjes transform if $m$ is a Stieltjes transform of a finite measure on $[0,\infty)$ but in general, this is not true any longer if $m$
is just a Stieltjes transform of a finite measure on $\R$. Moreover, our proof includes also the case $d/n\rightarrow 0$.\\
The proof is structured as follows. In the first step we truncate the entries of $X_{d,n}$ at the threshold level $K>0$ which goes to infinity at the very end. Afterwards we start to analyze the Stieltjes transform of the empirical spectral distribution of $\hat\Xi_{d,n}$. With the resolvent
$$\hat G_{d,n}(z)=\left(\Xi_{d,n}-zI_{d\times d}\right)^{-1}$$
we prove that 
$$e_{d,n}(z) = \frac{1}{d}\tr\left\{R_{d,n}\hat G_{d,n}(z)\right\}$$
is an approximate solution to the fixed point equation in Theorem \ref{theorem: main} in Step II. Correspondingly, the Stieltjes transform $m_{d,n}$ is shown to be approximated by the expression \eqref{eq: def m-circ} with $e_{d,n}$ in place of $e_{d,n}^\circ$. In the third step existence and uniqueness of a solution to the system of equations for $m_{d,n}^\circ$ is established. The solution $m_{d,n}^\circ$ is identified as a Stieltjes transform in Step IV. In Step V and VI, pointwise almost sure convergence of $e_{d,n}-e_{d,n}^\circ$ and $m_{d,n}-m_{d,n}^\circ$ to zero is derived. Finally, we deduce the weak convergence $\mu_{d,n}-\mu_{d,n}^\circ\Longrightarrow 0$ almost surely in Step VII.

\subsection{Step I: Second truncation of $X_{d,n}$} 
%Subsequently, denote
%$$\hat T_{d,n}=\frac{1}{n}(w_{d,n}w_{d,n}^\ast)\circ \left((Y_{d,n}\circ\varepsilon_{d,n})(Y_{d,n}\circ\varepsilon_{d,n})^\ast\right)-S_{d,n}.$$
%First, rewrite 
%\begin{align*}
%\hat T:&=n^{-1} \Big((Y\circ\varepsilon\circ (w,...,w))(Y\circ\varepsilon\circ (w,...,w))'\Big)-S\\
%&=n^{-1} \left((X\circ(\dv(T^{1/2}),...,\dv(T^{1/2}))\circ\varepsilon\circ (c,...,c))(X\circ(\dv(T^{1/2}),...,\dv(T^{1/2}))\circ\varepsilon\circ (c,...,c))^T\right)\\
%&=n^{-1} \left((X\circ\varepsilon\circ(\dv(T^{1/2})\circ c,...,\dv(T^{1/2})\circ c))(X\circ\varepsilon\circ(\dv(T^{1/2})\circ c,...,\dv(T^{1/2})\circ c))^T\right)+A\\
%&=n^{-1} \left((X\circ\tilde\varepsilon\circ(\dv(T^{1/2})\circ \tilde c,...,\dv(T^{1/2})\circ \tilde c))(X\circ\tilde\varepsilon\circ(\dv(T^{1/2})\circ \tilde c,...,\dv(T^{1/2})\circ \tilde c))^T\right)+A\\
%&=n^{-1}\left((Z\circ(\dv(T^{1/2})\circ \tilde c,...,\dv(T^{1/2})\circ \tilde c))(Z\circ(\dv(T^{1/2})\circ \tilde c,...,\dv(T^{1/2})\circ \tilde c))^T\right)+A\\
%&=n^{-1}\left((Z\circ(\dv(T^{1/2})\circ \tilde c,...,\dv(T^{1/2})\circ \tilde c))(Z\circ(\dv(T^{1/2})\circ \tilde c,...,\dv(T^{1/2})\circ \tilde c))^T\right)+A\\
%&=n^{-1}R^{1/2}ZZ' R^{1/2}-S,
%\end{align*}
%where $$Z_{ik}=\frac{X_{ik}\varepsilon_{ik}}{p_i^{1/2}},~i=1,\dots,d,~k=1,\dots,n,$$ and $R^{1/2}$ is the diagonal matrix with diagonal entries $$R_{ii}^{1/2}=\sqrt{\frac{T_{ii}}{{p_i}}},~i=1,...,p.$$
For arbitrary $K>0$ define matrices $\tilde X_{d,n}$, $\tilde Z_{d,n}$ and $\tilde \Xi_{d,n}=n^{-1}R_{d,n}^{1/2}\tilde Z_{d,n} \tilde Z_{d,n}^\ast R_{d,n}^{1/2}-S_{d,n}$, where
$$\tilde X_{ik}=X_{ik}\ind\{ |X_{ik}|\le K\} \ \ \text{and} \ \ \tilde Z_{ik,d,n}=\frac{\tilde X_{ik,d,n}\varepsilon_{ik,d,n}}{p_{i,d,n}^{1/2}}.$$
Moreover, define for arbitrary $\delta>0$ the event
$$\Delta_{i,d,n}=\left\{\frac{1}{n}\left|\sum_{l=1}^nX_{il}^2-\E X_{il}^2\right|\vee\frac{1}{n}\left|\sum_{l=1}^nX_{il}^2\ind\{|X_{il}|>K\}-\E X_{il}^2\ind \{|X_{il}|>K\}\right|< \delta\right\}.$$
With this notation, let $$\hat\Xi_{d,n}'=\frac{1}{n}R_{d,n}^{1/2} Z_{d,n}' (Z_{d,n}')^\ast R_{d,n}^{1/2}-S_{d,n}\text{ and} \ \ \tilde \Xi_{d,n}'=\frac{1}{n}R_{d,n}^{1/2}\tilde Z_{d,n}' (\tilde Z_{d,n}')^\ast R_{d,n}^{1/2}-S_{d,n},$$ where
$$X'_{ik,d,n}=X_{ik}\ind_{\Delta_{i,d,n}}, \ \ \tilde X'_{ik,d,n}=\tilde X_{ik}\ind_{\Delta_{i,d,n}}$$
and
$$Z_{ik,d,n}'=\frac{ X_{ik,d,n}'\varepsilon_{ik,d,n}}{p_{i,d,n}^{1/2}}, \ \ \tilde Z_{ik,d,n}'=\frac{\tilde X_{ik,d,n}'\varepsilon_{ik,d,n}}{p_{i,d,n}^{1/2}}.$$
%$$\X'_{ik}=X_{ik}\ind\left\{\frac{1}{n}\left|\sum_{l=1}^nX_{il}^2-\E X_{il}^2\right|\vee\frac{1}{n}\left|\sum_{l=1}^nX_{il}^2\ind\{|X_{il}|>K\}-\E X_{il}^2\ind \{|X_{il}|>K\}\right|\le \delta\right\}$$
Then,
\begin{align}
&d_L\left(\mu^{\hat\Xi_{d,n}},\mu^{\tilde \Xi_{d,n}}\right)\notag\\ 
&\hspace{1cm}\le d_L\left(\mu^{\hat\Xi_{d,n}},\mu^{\hat \Xi_{d,n}'}\right)+d_L\left(\mu^{\hat\Xi_{d,n}'},\mu^{\tilde \Xi_{d,n}'}\right)+d_L\left(\mu^{\tilde\Xi_{d,n}'},\mu^{\tilde \Xi_{d,n}}\right).\label{equation: second truncation levy}
\end{align}
First, we evaluate the second term $d_L(\mu^{\hat\Xi_{d,n}'},\mu^{\tilde \Xi_{d,n}'})$ in \eqref{equation: second truncation levy}.
By Theorem \ref{theorem: A38} for $\alpha=1$, the Lidskii-Wielandt perturbation bound (1.2) in \cite{Li1999}, and H\"older's inequality for Schatten norms,
\begin{align*}
\ \ &d_L^2\left(\mu^{\hat \Xi_{d,n}'},\mu^{\tilde \Xi_{d,n}'}\right)\\
&\le \frac{1}{d}\sum_{i=1}^d\left|\lambda_i(\hat \Xi_{d,n}')-\lambda_i(\tilde \Xi_{d,n}')\right|\\
&\le \frac{1}{dn}\left\Arrowvert R_{d,n}^{1/2}Z_{d,n}'(Z_{d,n}')^\ast R_{d,n}^{1/2}-R_{d,n}^{1/2}\tilde Z_{d,n}' (\tilde Z_{d,n}')^\ast R_{d,n}^{1/2}\right\Arrowvert_{S_1}\\
&\le  \frac{1}{dn}\left\Arrowvert Z_{d,n}'(Z_{d,n}')^\ast-\tilde Z_{d,n}' (\tilde Z_{d,n}')^\ast\right\Arrowvert_{S_1}\left\Arrowvert R_{d,n}\right\Arrowvert_{S_\infty}\\
&=\frac{1}{dn}\left\Arrowvert(Z_{d,n}'-\tilde Z_{d,n}')(Z_{d,n}'-\tilde Z_{d,n}')^\ast+(Z_{d,n}'-\tilde Z_{d,n}')(\tilde Z_{d,n}')^\ast+\tilde Z_{d,n}'(Z_{d,n}'-\tilde Z_{d,n}')^\ast\right\Arrowvert_{S_1}\\
&\hspace{5cm}\times\left\Arrowvert R_{d,n}\right\Arrowvert_{S_\infty}\\
&\le \frac{1}{dn}\bigg(\left\Arrowvert (Z_{d,n}'-\tilde Z_{d,n}')(Z_{d,n}'-\tilde Z_{d,n}')^\ast \right\Arrowvert_{S_1}+2\left\Arrowvert (Z_{d,n}'-\tilde Z_{d,n}')(\tilde Z_{d,n}')^\ast\right\Arrowvert_{S_1}\bigg)\left\Arrowvert R_{d,n}\right\Arrowvert_{S_\infty}\\
&\le\frac{1}{dn}\bigg(\left\Arrowvert (Z_{d,n}'-\tilde Z_{d,n}')(Z_{d,n}'-\tilde Z_{d,n}')^\ast \right\Arrowvert_{S_1}+2\left\Arrowvert Z_{d,n}'-\tilde Z_{d,n}'\right\Arrowvert_{S_2}\left\Arrowvert\tilde Z_{d,n}'\right\Arrowvert_{S_2}\bigg)\\
&\hspace{5cm}\times\left\Arrowvert R_{d,n}\right\Arrowvert_{S_\infty}\\
&\le\frac{1}{dn}\bigg\{\tr\big((Z_{d,n}'-\tilde Z_{d,n}')(Z_{d,n}'-\tilde Z_{d,n}')^\ast\big)\\
&\hspace{1.2cm}+2\Big(\tr\big((Z_{d,n}'-\tilde Z_{d,n}')(Z_{d,n}'-\tilde Z_{d,n}')^\ast\big)\Big)^{1/2}\big(\tr (\tilde Z_{d,n}'(\tilde Z_{d,n}')^\ast)\big)^{1/2}\bigg\} \left\Arrowvert R_{d,n}\right\Arrowvert_{S_\infty}.
\end{align*}
As in  Subsection \ref{subsec: modify} let $p_0>0$ be the lower bound on $p_{i,d,n},~i=1,\dots,d$ and $d\in\N$. With this notation, we show that
$$\sup_{d}\frac{1}{dn}\tr\Big((Z_{d,n}'-\tilde Z_{d,n}')(Z_{d,n}'-\tilde Z_{d,n}')^\ast\Big)\le \frac{\E X_{11}^2\ind\{|X_{11}|>K\}+\delta}{p_0},$$
while $$\sup_{d}\frac{1}{dn}\tr\big(\tilde Z_{d,n}' (\tilde Z_{d,n}')^\ast\big) \le \frac{\delta+\E X_{11}^2}{p_0}.$$ 
We have
\begin{align*}
&\frac{1}{dn}\tr\Big((Z_{d,n}'-\tilde Z_{d,n}')(Z_{d,n}'-\tilde Z_{d,n}')^\ast\Big)\\
&\hspace{2cm}=\frac{1}{dn}\sum_{i=1}^d\sum_{k=1}^n(Z_{ik,d,n}'-\tilde Z_{ik,d,n}')^2\\
&\hspace{2cm}\le \frac{1}{dn}\max_{i=1,\dots,d} \left(\frac{1}{p_{i,d,n}}\right)\sum_{i=1}^d\ind_{\Delta_{i,d,n}}\sum_{k=1}^nX_{ik}^2\ind\{|X_{ik}|>K\}\\
&\hspace{2cm}\le \frac{\E X_{11}^2\ind\{|X_{11}|>K\}+\delta}{p_0}.
\end{align*}
%The last line can be made arbitrarily small for $\delta$ sufficiently small and $K$ sufficiently large.
Moreover,
\begin{align*}
\frac{1}{dn}\tr\left(\tilde Z_{d,n}' (\tilde Z_{d,n}')^\ast\right)&=\frac{1}{dn}\sum_{i=1}^d\sum_{k=1}^n (\tilde Z_{ik,d,n}')^2\\
&\le\frac{1}{dn}\max_{i=1,\dots,d} \left(\frac{1}{p_{i,d,n}}\right)\sum_{i=1}^d\ind_{\Delta_{i,d,n}}\sum_{k=1}^n X_{ik,d,n}^2\ind\{ |X_{ik,d,n}|\le K\} \\
&\le \frac{\E X_{11}^2+\delta}{p_0}.
\end{align*}
As concerns the first summand in \eqref{equation: second truncation levy}, it holds $\P(\Delta_{i,d,n})\rightarrow 1$ as $d\to\infty$ by weak law of large numbers. Note that $\P(\Delta_{1,d,n})=\P(\Delta_{2,d,n})=\dots=\P(\Delta_{d,d,n})$. Then by Hoeffding's inequality for sufficiently large $d$,
\begin{align*}
\P\left(\sum_{i=1}^d\ind_{\Delta_{i,d,n}^c}\ge \delta d\right)&\le \P\left(\sum_{i=1}^d\left(\ind_{\Delta_{i,d,n}^c}-\P(\Delta_{i,d,n}^c)\right)\ge \frac{1}{2}\delta d\right)\\
&\le \exp\left(-\frac{\delta^2d}{2}\right).
\end{align*}
Hence, by the Borel-Cantelli lemma
$$
\limsup_{d\to\infty}\frac{1}{d}\sum_{i=1}^d\ind_{\Delta_{i,d,n}^c}<\delta
$$
almost surely. As in inequality \eqref{inequality: null vector} of Subsection \ref{subsec: trunc X 1} we deduce
\begin{align*}\limsup_{d\to \infty} d_L\left(\mu^{\hat\Xi_{d,n}},\mu^{\hat \Xi_{d,n}'}\right)&\le \limsup_{d\to \infty} d_K\left(\mu^{\hat\Xi_{d,n}},\mu^{\hat \Xi_{d,n}'}\right)\\
&\le \limsup_{d\to \infty}\frac{1}{d}\rank\left(\hat\Xi_{d,n}-\hat \Xi_{d,n}'\right)\\
&\le 2\delta\end{align*}
almost surely. The third summand in \eqref{equation: second truncation levy} is bounded in the same way. Putting things together in right hand side of \eqref{equation: second truncation levy},
\begin{align*}
&\limsup_{d\to\infty} d_L\left(\mu^{\hat\Xi_{d,n}},\mu^{\tilde \Xi_{d,n}}\right)\\
&\hspace{1.5cm}\le 4\delta+\sup_d\left\Arrowvert R_{d,n}\right\Arrowvert_{S_\infty}^{1/2}\Bigg[\frac{\E X_{11}^2\ind\{|X_{11}|>K\}+\delta}{p_0}\\
&\hspace{4.5cm}+2\frac{\sqrt{\E X_{11}^2\ind\{|X_{11}|>K\}+\delta} \sqrt{\delta+\E X_{11}^2}}{p_0}\Bigg]^{1/2}
\end{align*}
almost surely. Since $\delta$ may be chosen arbitrarily small, we conclude
\begin{align*}
&\limsup_{d\to\infty} d_L\left(\mu^{\hat\Xi_{d,n}},\mu^{\tilde \Xi_{d,n}}\right)\\ 
&\hspace{0.8cm}\le \sup_d\left\Arrowvert R_{d,n}\right\Arrowvert_{S_\infty}^{1/2}\Bigg[\frac{\E X_{11}^2\ind\{|X_{11}|>K\}}{p_0}+2\frac{\sqrt{\E X_{11}^2\ind\{|X_{11}|>K\}} \sqrt{\E X_{11}^2}}{p_0}\Bigg]^{1/2}.
\end{align*}
In turn, the last expression can be made arbitrary small for $K$ sufficiently large.
Since the centralization of the truncated random variables $\tilde X_{ik}$ leads to a finite rank perturbation of $\tilde\Xi_{d,n}$ (uniformly in $d$), we may assume the entries of $\tilde X_{ik}$ to be centered. In the following denote the centered truncated random matrix again by $X_{d,n}$. Then, analogously to the truncation step by replacing $\ind\{ |X_{ik}|\le K\}$ with $(\E X_{11}^2)^{-1/2}$ in the definition of $\tilde X$ we may assume the entries to be standardized since the variance of the truncated variables converges to one as the truncation level tends to infinity.
%Similarly to the centralization and rescaling argument in \cite{Bai2010} on page 48,
%follows that 
%$$\limsup_{d\to\infty} d_L\left(\tilde \Xi_{d,n}, \bar \Xi_{d,n}\right)\overset{\text{a.s.}}{\longrightarrow} 0 \ \ \text{as }K\to\infty,$$
%where
%$$\bar \Xi_{d,n}=\frac{1}{n}R^{1/2}_{d,n}\tilde\sigma^{-1}_K\left(\tilde Z_{d,n}-\E \tilde Z_{d,n}\right)\left(\tilde Z_{d,n}-\E \tilde Z_{d,n}\right)^\ast\tilde\sigma^{-1}_K R^{1/2}_{d,n} \ - \ S_{d,n},$$
%and $\tilde\sigma^2_K=\Var(\tilde X_{11,d,n})$. 
Therefore, in the rest of the proof we analyze the matrix
$$\hat \Xi_{d,n}=\frac{1}{n}R^{1/2}_{d,n}Z_{d,n}^{~}Z_{d,n}^\ast R^{1/2}_{d,n}-S_{d,n},$$
where the entries of the matrix $Z_{d,n}$ are centered, standardized and bounded.

%Therefore we have finally to study the matrix $n^{-1}\tilde T^{1/2}ZZ^T \tilde T^{1/2}+A$, which is related to the %matrix model in "A Deterministic Equivalent for the Capacity Analysis of Correlated Multi-User MIMO Channels". %The difference to the model of the mentioned paper is that $Z$ is a rather of a "triangular scheme" and $A$ is a %negative-semidefinite matrix.

\subsection{Step II: Approximate solution to the fixed point equation \eqref{eq: def m-circ}} Subsequently, we assume that
\begin{equation}\label{dn}
\liminf_{d\to\infty}\frac{d^{\frac{3}{2}}}{n}>0.
\end{equation}
The general case is treated in Step VI.
Recall that $\mu_{d,n}$ denotes the (normalized) spectral measure of $\hat \Xi_{d,n}$, and denote its Stieltjes transform by
\begin{align}
m_{d,n}(z)\ =\ \int\frac{1}{\lambda-z}\dd \mu_{d,n}(\lambda), \ \ z\in\C^+\label{eq: def m}.
\end{align}
%We show subsequently that $|m_{d,n}(z)-m_{d,n}^{\circ}(z)|\to 0$ a.s. as $d\to\infty$ for every $z\in\C^+$ with
%\begin{align}
%m_{d,n}^{\circ}(z)\ =\ \frac{1}{d}\tr\left\{\left(  \frac{1}{1+\frac{d}{n}e_{d,n}^{\circ}(z)}R_{d,n}-S_{d,n} - zI_{d\times d}\right)^{-1}\right\}
%\end{align}
%and $e_{d,n}^{\circ}$ the (unique) solution of the fixed point equation
%\begin{align*}
%e_{d,n}^{\circ}(z)\ &=\ \frac{1}{d}\tr\left\{R_{d,n}\left(\frac{1}{1+\frac{d}{n}e_{d,n}^{\circ}(z)}R_{d,n}- S_{d,n}-zI_{d\times d}\right)^{-1}\right\}.
%\end{align*}
%with
%\begin{align*}
%R_{d,n}\ &=\ \diag \left\{\frac{t_{ii,d,n}}{p_{i,d,n}}\right\}\ \ \ \text{and}\ \ \ S_{d,n}\ =\ \diag \left\{\frac{1-p_{i,d,n}}{p_{i,d,n}}t_{ii,d,n}\right\}.
%\end{align*}

\medskip
\noindent
We use subsequently the following abbreviations for the resolvents
$$
\hat G_{d,n}(z)=\left(\hat \Xi_{d,n}-zI_{d\times d}\right)^{-1} \ \ \text{and} \ \ \hat G_{d,n}^{(k)}(z)=\left(\hat \Xi_{d,n}^{(k)}-zI_{d\times d}\right)^{-1}, \ k=1,\dots,n.
$$
For $z\in\C^+$, define
$$
e_{d,n}(z)\ =\ \frac{1}{d}\tr\left\{R_{d,n}\hat G_{d,n}(z)\right\}.
$$
%where
%$$
%B_{d,n}\ Ê=\ -S_{d,n} + \sum_{j=1}^n Y_j Y_j'
%$$
%and
%$$
%Y_j\ =\ \frac{1}{\sqrt{n}}R_{d,n}^{1/2}Z_j.
%$$
Our goal in this step is to show that
\begin{align}
\frac{1}{d}\tr\left(D_{d,n}^{-1}(z)\right)-m_{d,n}(z)\ &\rightarrow\ 0\ \ \ a.s., \ \ \ \text{and}\label{eq: intermediate1}\\
\frac{1}{d}\tr\left(R_{d,n}^{~}D_{d,n}^{-1}(z)\right)-e_{d,n}(z)\ &\rightarrow\ 0\ \ \ a.s.\label{eq: intermediate2}
\end{align}
with 
\begin{equation}
D_{d,n}(z)=\frac{1}{1+\frac{d}{n}e_{d,n}(z)}R_{d,n}-S_{d,n}-zI_{d\times d}.\label{eq: definition D}
\end{equation}
Let
$
\hat \Xi_{d,n}=O_{d,n}\Lambda_{d,n} O_{d,n}^\ast 
$ 
denote a spectral decomposition, where $$\Lambda_{d,n}=\diag(\lambda_{1,d,n},\dots,\lambda_{d,d,n}),$$ and define $
\underline{R}_{d,n}=O_{d,n}^\ast R_{d,n}O_{d,n}$.
With this notation,
\begin{align}\notag
e_{d,n}(z) &= \frac{1}{d}\tr\left\{R_{d,n}\hat G_{d,n}(z)\right\}\\
&= \frac{1}{d}\tr \left\{R_{d,n}\left(O_{d,n}\Lambda_{d,n} O_{d,n}^\ast-zI_{d\times d}\right)^{-1}\right\}\nonumber\\
&=\frac{1}{d}\tr \left\{R_{d,n}\left(O_{d,n}\left[\Lambda_{d,n} -zI_{d\times d}\right]O_{d,n}^\ast\right)^{-1}\right\}\nonumber\\
&= \frac{1}{d}\tr \left\{R_{d,n}\left(O_{d,n}\left[\Lambda_{d,n} -zI_{d\times d}\right]^{-1}O_{d,n}^\ast\right)\right\}\nonumber\\
&= \frac{1}{d}\tr \left\{O_{d,n}^\ast R_{d,n}O_{d,n}\left(\Lambda_{d,n} -zI_{d\times d}\right)^{-1}\right\}\nonumber\\
&= \frac{1}{d}\tr \left\{\underline{R}_{d,n}\left(\Lambda_{d,n} -zI_{d\times d}\right)^{-1}\right\}\nonumber\\
&= \frac{1}{d}\sum_{i=1}^d\frac{\underline{R}_{ii,d,n}}{\lambda_{i,d,n}-z}.\label{eq: Stieltjes 1}\end{align}
Since $R_{d,n}$ and therefore $\underline{R}_{d,n}$ are positive semidefinite, the diagonal entries $ \underline{R}_{ii,d,n}$, $i=1,...,d$, are non-negative. Hence, $e_{d,n}$ is the Stieltjes transform of a measure on $\R$ with at most $d$ support points and total mass 
$$
\frac{1}{d}\tr R_{d,n}.
$$ 
Note that $\hat \Xi_{d,n}$ is not necessarily positive semidefinite, hence the support points are not restricted to $[0,\infty)$. As a Stieltjes transform,
\begin{equation}
e_{d,n}: \C^+\rightarrow\C^+.\label{eq: e Stieltjes}
\end{equation}
This implies in particular that $D_{d,n}(z)$ as defined in \eqref{eq: definition D} is in fact invertible by means of Lemma \ref{lemma8}. 
Moreover, since $\Arrowvert R_{d,n}\Arrowvert_{S_{\infty}}\leq \kappa$ for some constant $\kappa>0$,
it follows by H\"older's inequality and the positive definiteness of $R_{d,n}$,
\begin{align}
\vert e_{d,n}(z) \vert\   & \leq\ \frac{1}{d}\Arrowvert\underline{R}_{d,n}\Arrowvert_{S_1}\left\Arrowvert \left(\Lambda_{d,n}-zI_{d\times d}\right)^{-1}\right\Arrowvert_{S_{\infty}}\nonumber\\
&=\ \left(\frac{1}{d}\tr R_{d,n}\right)\max_{1\leq i\leq d}\frac{1}{\arrowvert\lambda_{i,d,n}-z\arrowvert}\notag\\
&\leq\ \frac{ \kappa}{\Im z}\label{eq: stieltjes3}.
\end{align}
Let $Z_{k,d,n}$ be the $k$-th column of the matrix $Z_{d,n}$, and define
\begin{equation*}
Y_{k,d,n}= \frac{1}{\sqrt{n}}R_{d,n}^{1/2}Z_{k,d,n}\ \ \text{and} \ \ \hat \Xi_{d,n}^{(k)}\ =\ \hat \Xi_{d,n}-Y_{k,d,n}Y_{k,d,n}^\ast, \ k=1,\dots,n,
\end{equation*}
which arises from $\hat \Xi_{d,n}$ by taking away the $k$-th sample vector, and recall \eqref{eq: definition D}.
Then,
$$
\hat \Xi_{d,n}-zI_{d\times d}-D_{d,n}(z) = \sum_{k=1}^nY_{k,d,n}Y_{k,d,n}^\ast-\frac{1}{1+\frac{d}{n}e_{d,n}(z)}R_{d,n},
$$
whence
\begin{align*}
D_{d,n}(z)&\left\{\hat G_{d,n}(z)-D_{d,n}^{-1}(z)\right\}\left(\hat \Xi_{d,n}-zI_{d\times d}\right)\\
&=\ D_{d,n}(z)-\left(\hat \Xi_{d,n}-zI_{d\times d}\right)\\
&=\ \frac{1}{1+\frac{d}{n}e_{d,n}(z)}R_{d,n}-\sum_{k=1}^nY_{k,d,n}Y_{k,d,n}^\ast.
\end{align*}
Therefore,
\begin{align}
\hat G_{d,n}(z)-D_{d,n}^{-1}(z) &= -\sum_{k=1}^nD_{d,n}^{-1}(z)Y_{k,d,n}Y_{k,d,n}^\ast \hat G_{d,n}(z)\nonumber\\
&\quad\quad\quad +\frac{1}{1+\frac{d}{n}e_{d,n}(z)}D_{d,n}^{-1}(z)R_{d,n}\hat G_{d,n}(z)\nonumber\\
&=\ -\sum_{k=1}^n\frac{D_{d,n}^{-1}(z)Y_{k,d,n}Y_{k,d,n}^\ast\hat G_{d,n}^{(k)}(z)}{1+Y_{k,d,n}^\ast\hat G_{d,n}^{(k)}(z)Y_{k,d,n}}\label{eq: lemma4}\\
&\quad\quad\quad +\frac{1}{1+\frac{d}{n}e_{d,n}(z)}D_{d,n}^{-1}(z)R_{d,n}\hat G_{d,n}(z),\nonumber\end{align}
where \eqref{eq: lemma4} follows from Lemma \ref{lemma4}. Altogether,
\begin{equation}
\frac{1}{d}\tr\left(D_{d,n}^{-1}(z)\right)-m_{d,n}(z)\ =\ \frac{1}{n}\sum_{k=1}^nf_{k,m}\label{eq: m}
\end{equation}
with 
\begin{align*}
f_{k,m}\ =\ \frac{1}{d}\frac{Z_{k,d,n}^\ast R_{d,n}^{1/2}\hat G_{d,n}^{(k)}(z)D_{d,n}^{-1}(z)R_{d,n}^{1/2}Z_{k,d,n}}{1+Y_{k,d,n}^\ast\hat G_{d,n}^{(k)}(z)Y_{k,d,n}}\ -\ \frac{1}{d}\frac{\tr\left(R_{d,n}\hat G_{d,n}(z)D_{d,n}^{-1}(z)\right)}{1+\frac{d}{n}e_{d,n}(z)}.
\end{align*}
Multiplication of the matrix equality \eqref{eq: lemma4} with $R_{d,n}$ from the right, we deduce
\begin{equation}
\frac{1}{d}\tr\left(R_{d,n}D_{d,n}^{-1}(z)\right)-e_{d,n}(z) =\frac{1}{n}\sum_{k=1}^nf_{k,e}\label{eq: e}
\end{equation}
with
\begin{align*}
f_{k,e} &= \frac{1}{d}\frac{Z_{k,d,n}^\ast R_{d,n}^{1/2}\hat G_{d,n}^{(k)}(z)R_{d,n}D_{d,n}^{-1}(z)R_{d,n}^{1/2}Z_{k,d,n}}{1+Y_{k,d,n}^\ast\hat G_{d,n}^{(k)}(z)Y_{k,d,n}}\\ 
&\ \ \ \ \ \ \ \ \ \ \ \ \  -\ \frac{1}{d}\frac{\tr\left(R_{d,n}\hat G_{d,n}(z)R_{d,n}D_{d,n}^{-1}(z)\right)}{1+\frac{d}{n}e_{d,n}(z)}.
\end{align*}
Subsequently, we show that 
\begin{align}
\lim_{d\rightarrow\infty}\frac{1}{n}\sum_{k=1}^nf_{k,x}\ =\ 0\ \ a.s.,\ \ \ x=e,m.\label{eq: Konvergenz}
\end{align}
First observe that
\begin{align*}
Y_{k,d,n}^\ast\hat G_{d,n}^{(k)}(z)Y_{k,d,n} = \tr\left( Y_{k,d,n}Y_{k,d,n}^\ast\hat G_{d,n}^{(k)}(z)\right)
\end{align*}
is the Stieltjes transform of a measue on $\R$ with total mass $\Arrowvert Y_{k,d,n}\Arrowvert_2^2$, following the arguments in \eqref{eq: Stieltjes 1}. Next, with $\lambda_{1,d,n}^{(k)},...,\lambda_{d,d,n}^{(k)}$ denoting the eigenvalues of $\hat \Xi_{d,n}^{(k)}$,
\begin{align}
\left\Arrowvert\hat G_{d,n}^{(k)}(z)\right\Arrowvert_{S_{\infty}} %&=\ \max_{i=1,\dots,d}\frac{1}{\sqrt{(\lambda_{i,d,n}^{(k)}-z)(\lambda_{i,d,n}^{(k)}-z^*)}}\nonumber \\
&= \max_{i=1,\dots,d} \frac{1}{\sqrt{\left(\lambda_{i,d,n}^{(k)}-\Re z\right)^2+\Im(z)^2}}\nonumber\\
&\leq \frac{1}{\Im z}\label{eq: 22}. 
\end{align}
The same holds true for $\hat G_{d,n}(z)$ in place of $\hat G_{d,n}^{(k)}(z)$. Therefore, 
$$
\left\arrowvert Y_{k,d,n}^\ast\hat G_{d,n}^{(k)}(z)Y_{k,d,n}\right\arrowvert \leq \frac{\Arrowvert Y_k\Arrowvert_2^2}{\Im z},
$$
which gives
\begin{equation}\label{eq: first bound}
\left\arrowvert\frac{1}{1+ Y_{k,d,n}^\ast\hat G_{d,n}^{(k)}(z)Y_{k,d,n}}\right\arrowvert \leq  \frac{1}{1-\frac{\Arrowvert Y_{k,d,n}\Arrowvert_2^2}{\Im z}}\ \ \ \text{if}\ \frac{\Arrowvert Y_{k,d,n}\Arrowvert_2^2}{\Im z}<1.
\end{equation}
Denoting with   $O\Lambda O^\ast$ the spectral decomposition of $\hat \Xi_{d,n}^{(k)}$ and  $V_{ii}^{(k)}=(O^\ast Y_{k,d,n}Y_{k,d,n}^\ast O)_{ii}$ for the moment, we obtain for $\Arrowvert Y_{k,d,n}\Arrowvert_2>0$ the bound
\begin{align}
\left\arrowvert\frac{1}{1+ Y_{k,d,n}^\ast \hat G_{d,n}^{(k)}(z)Y_{k,d,n}}\right\arrowvert &\leq 
\frac{1}{\Im\left(Y_{k,d,n}^\ast \hat G_{d,n}^{(k)}(z)Y_{k,d,n}\right)}\nonumber\\
&= \frac{1}{\Im(z)\sum_{i=1}^d\frac{V_{ii}^{(k)}}{(\lambda_{i,d,n}^{(k)}-\Re(z))^2+\Im(z)^2}}\nonumber\\
&\leq  \frac{1}{\Im(z)\sum_{i=1}^d\frac{V_{ii}^{(k)}}{2\max_i\arrowvert \lambda_{i,d,n}^{(k)}\arrowvert^2 +2\arrowvert z\arrowvert^2}}\nonumber\\
&\leq \frac{2\max_i\left\arrowvert\lambda_{i,d,n}^{(k)}\right\arrowvert^2+2\arrowvert z\arrowvert^2}{\Im(z)\Arrowvert Y_k\Arrowvert^2_2}.\label{eq: second bound}
\end{align}
Combining the first bound \eqref{eq: first bound} in case $\Arrowvert Y_{k,d,n}\Arrowvert_2^2/\Im z\leq 1/2$ with the second bound \eqref{eq: second bound} if  $\Arrowvert Y_{k,d,n}\Arrowvert_2^2/\Im z> 1/2$ yields 
\begin{align}
\left\arrowvert\frac{1}{1+ Y_{k,d,n}^\ast \hat G_{d,n}^{(k)}(z)Y_{k,d,n}}\right\arrowvert &\leq 2\left\{\frac{\max_i\left\arrowvert\lambda_{i,d,n}^{(k)}\right\arrowvert^2+\arrowvert z\arrowvert^2}{\Im(z)^2}+ 1\right\}\nonumber\\
&\leq \frac{2\max_i\left\arrowvert\lambda_{i,d,n}^{(k)}\right\arrowvert^2+4\arrowvert z\arrowvert^2}{\Im(z)^2}.\label{eq: Stieltjes5}
\end{align}
Finally, due to 
$$\big\Arrowvert\hat \Xi_{d,n}\big\Arrowvert_{S_\infty}\le \big\Arrowvert S_{d,n}\big\Arrowvert_{S_\infty}+\Bigg\Arrowvert\sum_{\substack{l=1\\ l\neq k}}^nY_{l,d,n}Y_{l,d,n}^\ast \Bigg\Arrowvert_{S_\infty}$$ 
and Lemma \ref{lemma: log d},
\begin{align}
\underset{d\rightarrow\infty}{\lim\sup}\left\{\left( \frac{2c+4\arrowvert z\arrowvert^2}{\Im(z)^2}\right)^{-1}  \left\arrowvert\frac{1}{1+ Y_{k,d,n}^\ast \hat G_{d,n}^{(k)}(z)Y_{k,d,n}}\right\arrowvert\right\} &\leq C<\infty\label{eq: Stieltjes15}
\end{align}
almost surely for some constants $C,c>0$. Define
$$
e^{(k)}_{d,n} = \frac{1}{d}\tr\left(R_{d,n}\hat G_{d,n}^{(k)}(z)\right),\ \ k\in\{1,...,n\}.
$$
Note that analogously to \eqref{eq: Stieltjes 1}, $e_{d,n}^{(k)}$ is a Stieltjes transform. Using \eqref{eq: 22} and the arguments of \eqref{eq: first bound} for the case $n^{-1}\tr(R_{d,n})/\Im(z)\leq 1/2$ as well as  \eqref{eq: second bound} for  $n^{-1}\tr(R_{d,n})/\Im(z)> 1/2$
we obtain analogously
\begin{equation}\label{eq: bound3}
\left\arrowvert\frac{1}{1+\frac{d}{n}e_{d,n}^{(k)}(z)}\right\arrowvert \leq \frac{2\max_i\left\arrowvert\lambda_{i,d,n}^{(k)}\right\arrowvert^2+4\arrowvert z\arrowvert^2}{\Im(z)^2}
\end{equation}
and for some constants $C,c>0$
\begin{equation}
\underset{d\rightarrow\infty}{\lim\sup}\left\{\left( \frac{2c+4\arrowvert z\arrowvert^2}{\Im(z)^2}\right)^{-1} \left\arrowvert\frac{1}{1+\frac{d}{n}e_{d,n}^{(k)}(z)}\right\arrowvert\ \right\}\leq  C<\infty.\label{eq: Stieltjes16}
\end{equation}
The same bound holds true for $e_{d,n}$ instead of $e_{d,n}^{(k)}$, in which case $\lambda_{i,d,n}^{(k)}$ are to be replaced by the eigenvalues $\lambda_{i,d,n}$ of $\hat \Xi_{d,n}$. Therefore, with 
$$
\psi_{d,n}^{(k)} = \max_{i=1,\dots,d}\left\{\left(\lambda_{i,d,n}^{(k)}\right)^2,\lambda_{i,d,n}^2\right\},
$$
\begin{align}
\left\arrowvert \frac{1}{1+\frac{d}{n}e_{d,n}(z)}-\frac{1}{1+\frac{d}{n}e_{d,n}^{(k)}(z)}\right\arrowvert &= \frac{d}{n}\cdot\frac{\left\arrowvert e_{d,n}^{(k)}(z)-e_{d,n}(z)\right\arrowvert}{\left\arrowvert \left(1+\frac{d}{n}e_{d,n}(z)\right) \left(1+\frac{d}{n}e_{d,n}^{(k)}(z)\right)\right\arrowvert}\nonumber\\
&\leq \frac{d}{n}\frac{\Arrowvert R_{d,n}\Arrowvert_{S_{\infty}}}{\Im z}\frac{1}{d}\frac{1}{\left\arrowvert \left(1+\frac{d}{n}e_{d,n}(z)\right) \left(1+\frac{d}{n}e_{d,n}^{(k)}(z)\right)\right\arrowvert}\label{eq: stieltjes1}\\
&\leq \frac{1}{n}\frac{\Arrowvert R_{d,n}\Arrowvert_{S_{\infty}}}{\Im z}\left(\frac{2\psi_{d,n}^{(k)}+4\arrowvert z\arrowvert^2}{\Im(z)^2}\right)^2,
\label{eq: stieltjes2}
\end{align}
where inequality \eqref{eq: stieltjes1} follows from Lemma \ref{lemma2.6} and \eqref{eq: stieltjes2} results from \eqref{eq: bound3}.
Furthermore, with 
\begin{equation}
D_{d,n}^{(k)}(z)= \frac{1}{1+\frac{d}{n}e_{d,n}^{(k)}(z)}R_{d,n}-S_{d,n}-zI_{d\times d},
\end{equation}
it follows from Lemma \ref{lemma8} that
\begin{equation}
\left\Arrowvert D_{d,n}^{-1}(z)\right\Arrowvert_{S_{\infty}} \leq \frac{1}{\Im z}\ \  \ \text{as well as}\ \ \ \left\Arrowvert \left(D_{d,n}^{(k)}(z)\right)^{-1}\right\Arrowvert_{S_{\infty}} \leq \frac{1}{\Im z}.\label{eq: spectralnormD}
\end{equation}
We begin with establishing \eqref{eq: Konvergenz}. To this aim, let 
\begin{align*}
E_{x,d,n}=\begin{cases}I_{d\times d} &\text{for }x=m,\\ R_{d,n} &\text{for }x=e.\end{cases}
\end{align*}
 We  decompose 
$$
f_{k,x}=f_{k,x}^{[1]}+f_{k,x}^{[2]}+f_{k,x}^{[3]}+f_{k,x}^{[4]},
$$ 
where
\begin{align*}
f_{k,x}^{[1]}&= \frac{1}{d}\frac{Z_{k,d,n}^\ast R_{d,n}^{1/2}\hat G_{d,n}^{(k)}(z)E_{x,d,n}D_{d,n}^{-1}(z)R_{d,n}^{1/2}Z_{k,d,n}}{1+Y_{k,d,n}^\ast \hat G_{d,n}^{(k)}(z)Y_{k,d,n}}\\ 
&\quad\quad\quad\quad -\ \frac{1}{d}\frac{Z_{k,d,n}^\ast R_{d,n}^{1/2}\hat G_{d,n}^{(k)}(z)E_{x,d,n}\left(D_{d,n}^{(k)}(z)\right)^{-1}R_{d,n}^{1/2}Z_{k,d,n}}{1+Y_{k,d,n}^\ast \hat G_{d,n}^{(k)}(z)Y_{k,d,n}},\\
f_{k,x}^{[2]} &= \frac{1}{d}\frac{Z_{k,d,n}^\ast R_{d,n}^{1/2}\hat G_{d,n}^{(k)}(z)E_{x,d,n}\left(D_{d,n}^{(k)}(z)\right)^{-1}R_{d,n}^{1/2}Z_{k,d,n}}{1+Y_{k,d,n}^\ast \hat G_{d,n}^{(k)}(z)Y_{k,d,n}}\\
&\quad\quad\quad\quad -\ \frac{1}{d}\frac{\tr\left(R_{d,n}\hat G_{d,n}^{(k)}(z)E_{x,d,n}\left(D_{d,n}^{(k)}(z)\right)^{-1}\right)}{1+Y_{k,d,n}^\ast \hat G_{d,n}^{(k)}(z)Y_{k,d,n}},\\
f_{k,x}^{[3]} &= \frac{1}{d}\frac{\tr\left(R_{d,n}\hat G_{d,n}^{(k)}(z)E_{x,d,n}\left(D_{d,n}^{(k)}(z)\right)^{-1}\right)}{1+Y_{k,d,n}^\ast \hat G_{d,n}^{(k)}(z)Y_{k,d,n}}\\
&\quad\quad\quad\quad -\  \frac{1}{d}\frac{\tr\left(R_{d,n}\hat G_{d,n}(z)E_{x,d,n}D_{d,n}^{-1}(z)\right)}{1+Y_{k,d,n}^\ast \hat G_{d,n}^{(k)}(z)Y_{k,d,n}},\\
f_{k,x}^{[4]} &= \frac{1}{d}\frac{\tr\left(R_{d,n}\hat G_{d,n}(z)E_{x,d,n}D_{d,n}^{-1}(z)\right)}{1+Y_{k,d,n}^\ast \hat G_{d,n}^{(k)}(z)Y_{k,d,n}}\\
&\quad\quad\quad\quad  -\  \frac{1}{d}\frac{\tr\left(R_{d,n}\hat G_{d,n}(z)E_{x,d,n}D_{d,n}^{-1}(z)\right)}{1+\frac{d}{n}e_{d,n}(z)}.
\end{align*}
Using Lemma \ref{lemma4} in \eqref{eq: lemma4b} as well as the spectral norm bounds \eqref{eq: stieltjes2}, \eqref{eq: 22} and \eqref{eq: spectralnormD} in \eqref{eq: 23}, we obtain
\begin{align}
\left\arrowvert f_{k,x}^{[1]}\right\arrowvert &= \left\arrowvert\frac{1}{d}\frac{Z_{k,d,n}^\ast R_{d,n}^{1/2}\hat G_{d,n}^{(k)}(z)E_{x,d,n}\left[D_{d,n}^{-1}(z)-\left(D_{d,n}^{(k)}(z)\right)^{-1}\right]R_{d,n}^{1/2}Z_{k,d,n}}{1+Y_{k,d,n}^\ast \hat G_{d,n}^{(k)}(z)Y_{k,d,n}}\right\arrowvert\nonumber\\
&= \left\arrowvert \frac{n}{d}Y_{k,d,n}^\ast \hat G_{d,n}(z)E_{x,d,n}\left[D_{d,n}^{-1}(z)-\left(D_{d,n}^{(k)}(z) \right)^{-1}\right]Y_{k,d,n}\right\arrowvert\label{eq: lemma4b}\\
&= \bigg\arrowvert \frac{n}{d}Y_{k,d,n}^\ast \hat G_{d,n}(z)E_{x,d,n}\left(D_{d,n}^{(k)}(z) \right)^{-1}\notag\\
&\quad\quad\quad\times\left[D_{d,n}^{(k)}(z) -D_{d,n}(z)\right]D_{d,n}^{-1}(z)Y_{k,d,n}\bigg\arrowvert\nonumber\\
&\leq \frac{n}{d}\Arrowvert Y_{k,d,n}\Arrowvert_2^2\left\Arrowvert \hat G_{d,n}(z)\right\Arrowvert_{S_{\infty}}\left\Arrowvert E_{x,d,n}\right\Arrowvert_{S_{\infty}}\label{eq: lemma5}\\
&\quad\quad\quad\times \left\Arrowvert D_{d,n}^{-1}(z)\right\Arrowvert_{S_{\infty}}\left\Arrowvert D_{d,n}^{(k)}(z) -D_{d,n}\right\Arrowvert_{S_{\infty}}\left\Arrowvert \left(D_{d,n}^{(k)}(z)\right)^{-1} \right\Arrowvert_{S_{\infty}}\nonumber\\
&\leq \frac{1}{d} \Arrowvert Y_{k,d,n}\Arrowvert_2^2 \frac{\left(2\psi_{d,n}^{(k)}+4|z|^2\right)^2\Arrowvert R_{d,n}\Arrowvert_{S_{\infty}}^2\left\Arrowvert E_{x,d,n}\right\Arrowvert_{S_{\infty}}}{(\Im z)^8}\label{eq: 23}\\
&\le  \frac{1}{dn}  \Arrowvert Z_{k,d,n}\Arrowvert_2^2\frac{\left(2\psi_{d,n}^{(k)}+4|z|^2\right)^2\Arrowvert R_{d,n}\Arrowvert_{S_{\infty}}^3\left\Arrowvert E_{x,d,n}\right\Arrowvert_{S_{\infty}}}{(\Im z)^8}.\nonumber
\end{align}
By \eqref{eq: Stieltjes5},
\begin{align*}
\left\arrowvert f_{k,x}^{[2]}\right\arrowvert &= \left\arrowvert\frac{1}{d}\frac{\tr \left[ \left(R_{d,n}^{1/2}Z_{k,d,n}Z_{k,d,n}^\ast R_{d,n}^{1/2}-R_{d,n}\right)\hat G_{d,n}^{(k)}(z)E_{x,d,n}\left(D_{d,n}^{(k)}(z)\right)^{-1}\right]}{1+Y_{k,d,n}^\ast \hat G_{d,n}^{(k)}(z)Y_{k,d,n}}\right\arrowvert\\
&\leq \frac{2\max_i\left\arrowvert\lambda_{i,d,n}^{(k)}\right\arrowvert^2+4\arrowvert z\arrowvert^2}{\Im(z)^2}\\
&\ \ \ \ \ \times\frac{1}{d}\left\arrowvert \tr \left[ \left(R_{d,n}^{1/2}Z_{k,d,n}Z_{k,d,n}^\ast R_{d,n}^{1/2}-R_{d,n}\right)\hat G_{d,n}^{(k)}(z)E_{x,d,n}\left(D_{d,n}^{(k)}(z)\right)^{-1}\right]\right\arrowvert.
\end{align*}
Furthermore, using \eqref{eq: Stieltjes5} in \eqref{eq: in1}, the invariance of the trace under cyclic permutation and Lemma \ref{lemma2.6} in \eqref{eq: 24} for the first term in the curly brackets and the spectral norm bounds \eqref{eq: stieltjes2}, \eqref{eq: 22} and \eqref{eq: spectralnormD} in \eqref{eq: in2} yields the bound
\begin{align}
\left\arrowvert f_{k,x}^{[3]}\right\arrowvert &= \left\arrowvert\frac{1}{d}\frac{\tr\left[R_{d,n}\left(\hat G_{d,n}^{(k)}(z)E_{x,d,n}\left(D_{d,n}^{(k)}(z)\right)^{-1}-\hat G_{d,n}(z)E_{x,d,n}D_{d,n}^{-1}(z)\right)\right]}{1+Y_{k,d,n}^\ast \hat G_{d,n}^{(k)}(z)Y_{k,d,n}}\right\arrowvert\nonumber\\
&\leq \frac{2\max_i\left\arrowvert\lambda_{i,d,n}^{(k)}\right\arrowvert^2+4\arrowvert z\arrowvert^2}{\Im(z)^2}\label{eq: in1}\\
&\ \ \ \ \times\left\{\frac{1}{d}\left\arrowvert \tr\left[R_{d,n}\left(\hat G_{d,n}^{(k)}(z)-\hat G_{d,n}(z)\right)E_{x,d,n}\left(D_{d,n}^{(k)}(z)\right)^{-1}\right]\right\arrowvert\right.\notag\\
&\hspace{1.7cm}+ \left.\frac{1}{d}\left\arrowvert \tr\left[R_{d,n}\hat G_{d,n}(z)E_{x,d,n}\left(\left(D_{d,n}^{(k)}(z)\right)^{-1}-D_{d,n}^{-1}(z)\right)\right]\right\arrowvert\right\}\nonumber\\
&\leq  \frac{2\max_i\left\arrowvert\lambda_{i,d,n}^{(k)}\right\arrowvert^2+4\arrowvert z\arrowvert^2}{\Im(z)^2}\left\{\frac{1}{d}\frac{\left\Arrowvert E_{x,d,n}\left(D_{d,n}^{(k)}(z)\right)^{-1}R_{d,n}\right\Arrowvert_{S_{\infty}}}{\Im z}\right.\label{eq: 24}\\
&\hspace{-25mm}\left.\phantom{\frac{\left\Arrowvert \left(D_d^{(k)}\right)^{-1}\right\Arrowvert_{S_1}}{\Im z}}+ \frac{1}{d}\left\arrowvert \tr\left[R_{d,n}\hat G_{d,n}(z)E_{x,d,n}D_{d,n}^{-1}(z)\left[D_{d,n}(z)-D_{d,n}^{(k)}(z)\right]\left(D_{d,n}^{(k)}(z)\right)^{-1}\right]\right\arrowvert\right\}
\nonumber\\
&\leq  \frac{1}{d} \frac{2\max_i\left\arrowvert\lambda_{i,d,n}^{(k)}\right\arrowvert^2+4\arrowvert z\arrowvert^2}{\Im(z)^2}\frac{1}{(\Im z)^2}\left\Arrowvert R_{d,n}\right\Arrowvert_{S_{\infty}}\left\Arrowvert E_{x,d,n}\right\Arrowvert_{S_{\infty}}\label{eq: in2}\\
&\hspace{1.7cm} +\frac{1}{n} \left(\frac{2\psi_{d,n}^{(k)}+4\arrowvert z\arrowvert^2}{\Im(z)^2}\right)^3\frac{\Arrowvert R_{d,n}\Arrowvert_{S_{\infty}}^{3}}{(\Im z)^4}\left\Arrowvert E_{x,d,n}\right\Arrowvert_{S_{\infty}}.\nonumber
\end{align}
Finally, using \eqref{eq: 22} and \eqref{eq: spectralnormD} in \eqref{eq: 50},   \eqref{eq: Stieltjes5} and \eqref{eq: bound3} in \eqref{eq: 51} and Lemma \ref{lemma2.6} in \eqref{eq: nuller},
\begin{align}
\left\arrowvert f_{k,x}^{[4]}\right\arrowvert &= \frac{1}{d}\left\arrowvert \tr\left(R_{d,n}\hat G_{d,n}(z)E_{x,d,n}D_{d,n}^{-1}(z)\right)\right\arrowvert\nonumber\\
&\ \ \ \ \ \times \left\arrowvert \frac{n^{-1}\tr\left[R_{d,n}^{1/2}\hat G_{d,n}(z)R_{d,n}^{1/2}\right]-n^{-1}Z_{k,d,n}^\ast R_{d,n}^{1/2}\hat G_{d,n}^{(k)}(z)R_{d,n}^{1/2}Z_{k,d,n}}{\left(1+Y_{k,d,n}^\ast \hat G_{d,n}^{(k)}(z)Y_{k,d,n}\right)\left(1+\frac{d}{n}e_{d,n}(z)\right)}\right\arrowvert\nonumber\\
&\leq \frac{1}{d}\frac{\left\Arrowvert R_{d,n}\right\Arrowvert_{S_1}\left\Arrowvert E_{x,d,n}\right\Arrowvert_{S_{\infty}}}{(\Im z)^2}\label{eq: 50}\\
&\ \ \ \ \ \times \left\arrowvert \frac{n^{-1}\tr\left[R_{d,n}^{1/2}\hat G_{d,n}(z)R_{d,n}^{1/2}\right]-n^{-1}Z_{k,d,n}^\ast R_{d,n}^{1/2}\hat G_{d,n}^{(k)}(z)R_{d,n}^{1/2}Z_{k,d,n}}{\left(1+Y_{k,d,n}^\ast \hat G_{d,n}^{(k)}(z)Y_{k,d,n}\right)\left(1+\frac{d}{n}e_{d,n}(z)\right)}\right\arrowvert\nonumber\\
&\leq\ \frac{1}{d}\frac{\left\Arrowvert R_{d,n}\right\Arrowvert_{S_1}\left\Arrowvert E_{x,d,n}\right\Arrowvert_{S_{\infty}}}{(\Im z)^2}\left(\frac{2\left(\psi_{d,n}^{(k)}\right)^2+4\arrowvert z\arrowvert^2}{\Im(z)^2}\right)^2\label{eq: 51}\\
&\ \ \ \ \ \times \left\arrowvert \frac{1}{n}\tr\left[R_{d,n}^{1/2}\hat G_{d,n}(z)R_{d,n}^{1/2}\right]-\frac{1}{n}Z_{k,d,n}^\ast R_{d,n}^{1/2}\hat G_{d,n}^{(k)}(z)R_{d,n}^{1/2}Z_{k,d,n}\right\arrowvert\nonumber\\
&\leq\ \frac{1}{d}\frac{\left\Arrowvert R_{d,n}\right\Arrowvert_{S_1}\left\Arrowvert E_{x,d,n}\right\Arrowvert_{S_{\infty}}}{(\Im z)^2}\left(\frac{2\left(\psi_{d,n}^{(k)}\right)^2+4\arrowvert z\arrowvert^2}{\Im(z)^2}\right)^2\label{eq: nuller}\\
&\ \ \ \ \ \times \Bigg\{ \left\arrowvert \frac{1}{n}\tr\left[R_{d,n}^{1/2}\hat G_{d,n}^{(k)}(z)R_{d,n}^{1/2}\right]-\frac{1}{n}Z_{k,d,n}^\ast R_{d,n}^{1/2}\hat G_{d,n}^{(k)}(z)R_{d,n}^{1/2}Z_{k,d,n}\right\arrowvert \nonumber\\
&\hspace{2cm}+\frac{1}{n}\frac{\Arrowvert R_{d,n}\Arrowvert_{S_\infty}}{\Im z}\Bigg\}\notag
%&\le \frac{1}{d}\frac{\left\Arrowvert R_{d,n}\right\Arrowvert_{S_1}\left\Arrowvert E_{x,d,n}\right\Arrowvert_{S_{\infty}}}{(\Im z)^2}\left(\frac{2\left(\xi_{d,n}^{(k)}\right)^2+4\arrowvert z\arrowvert^2}{\Im(z)^2}\right)^2\notag
%&\ \ \ \ \ \times \Bigg\{ \left\arrowvert \frac{1}{n}\tr\left[R_{d,n}^{1/2}\hat G_{d,n}^{(k)}(z)R_{d,n}^{1/2}\right]-\frac{1}{n}Z_k^\ast R_{d,n}^{1/2}\hat G_{d,n}^{(k)}(z)R_{d,n}^{1/2}Z_k\right\arrowvert \nonumber\\
%&\hspace{2cm}+\frac{1}{n}\frac{\Arrowvert R_{d,n}\Arrowvert_{S_\infty}}{\Im z}\Bigg\}\notag\\
%&\leq\ \frac{1}{d}\frac{\left\Arrowvert R_{d,n}\right\Arrowvert_{S_1}\left\Arrowvert E_{x,d,n}\right\Arrowvert_{S_{\infty}}}{(\Im z)^2}\left(\frac{2\left(\xi_{d,n}^{(k)}\right)^2+4\arrowvert z\arrowvert^2}{\Im(z)^2}\right)^2.\label{eq: 52}\\
%&\ \ \ \ \ \times \Bigg\{ \frac{1}{n \Im z}\left\Arrowvert R_{d,n} \right\Arrowvert_{S_\infty}\Big\arrowvert \tr\left(I_{d\times d}\right)-Z_k^\ast Z_k\Big\arrowvert+\frac{1}{n}\frac{\Arrowvert R_{d,n}\Arrowvert_{S_\infty}}{\Im z}\Bigg\}.\notag
\end{align}
Based on these estimates on $f_{k,x}^{[l]}$, $l=1,2,3,4$, we are ready to prove \eqref{eq: intermediate1} and \eqref{eq: intermediate2}. In the next display, $c>0$ denotes a constant depending only on the support of $Z_{11}$, and may change from line to line. By means of Lemma \ref{lemma: moments}, Lemma \ref{lemma: eigenvalue}, Lemma \ref{lemma7} and the spectral norm bounds \eqref{eq: 22} and \eqref{eq: spectralnormD},
\begin{align*}
\E \left\arrowvert f_{k,x}^{[1]}\right\arrowvert^6 &\leq \frac{\Arrowvert R_{d,n}\Arrowvert_{S_{\infty}}^{18}\left\Arrowvert E_{x,d,n}\right\Arrowvert_{S_{\infty}}^{6}}{{n^6d^6(\Im z)^{48}}}\E\left\{\Arrowvert Z_{k,d,n}\Arrowvert_2^{12}
\left(2\psi_{d,n}^{(k)}+4|z|^2\right)^6 \right\}\\
& \le  2^{17}\frac{\Arrowvert R_{d,n}\Arrowvert_{S_{\infty}}^{18}\left\Arrowvert E_{x,d,n}\right\Arrowvert_{S_{\infty}}^{6}}{{d^6n^6(\Im z)^{48}}}\Bigg[\Big(\E\Arrowvert Z_{k,d,n}\Arrowvert_2^{24}\Big)^{1/2}\\
&\hspace{0.5cm}\times\Bigg(\E\Bigg(\Arrowvert S_{d,n}\Arrowvert_{S_\infty}^2+\max \Bigg\{\Bigg\Arrowvert \sum_{l=1}^nY_lY_l^\ast \Bigg\Arrowvert_{S_\infty}^2,\Bigg\Arrowvert \sum_{\substack{l=1 \\ l\neq k}}^nY_lY_l^\ast \Bigg\Arrowvert_{S_\infty}^2\Bigg\}\Bigg)^{12}\Bigg)^{1/2}\\
&\hspace{5cm}+|z|^{12}\Bigg) \Bigg]\\
& \le c \frac{\Arrowvert R_{d,n}\Arrowvert_{S_{\infty}}^{18}\left\Arrowvert E_{x,d,n}\right\Arrowvert_{S_{\infty}}^{6}}{{n^6(\Im z)^{48}}}\Big(\Arrowvert S_{d,n} \Arrowvert_{S_\infty}^{12}+\Arrowvert R_{d,n} \Arrowvert_{S_\infty}^{12} +|z|^{12} \Big)\\
%&\leq\ c\,d^3(\log d)^{12}\left\{\frac{\Arrowvert R_{d,n}\Arrowvert_{S_{\infty}}^2\left\Arrowvert E_{x,d,n}\right\Arrowvert_{S_{\infty}}}{n(\Im z)^7}\right\}^6,\\
\E \left\arrowvert f_{k,x}^{[2]}\right\arrowvert^6 &\le  \frac{c}{d^3}\E\Bigg\{\Bigg(\frac{2\max_i\left\arrowvert\lambda_{i,d,n}^{(k)}\right\arrowvert^2+4\arrowvert z\arrowvert^2}{\Im(z)^2}\Bigg)^6\\
&\hspace{3cm}\times\left\Arrowvert R_{d,n}^{1/2}\hat G_{d,n}^{(k)}(z)E_{x,d,n}\left(D_{d,n}^{(k)}(z)\right)^{-1}R_{d,n}^{1/2}\right\Arrowvert_{S_{\infty}}^6\Bigg\}\\
%&\le  \frac{\Arrowvert R_{d,n} \Arrowvert_{S_\infty}^6\Arrowvert E_{x,d,n} \Arrowvert_{S_\infty}^6}{(\Im z)^{24}} \E\Bigg\{\left(2\max_i\left\arrowvert\lambda_{i,d,n}^{(k)}\right\arrowvert^2+4\arrowvert z
%\arrowvert^2\right)^6 \Big\arrowvert \frac{1}{d} \tr \big(Z_{k,d,n}Z_{k,d,n}^\ast -I_{d\times d}\big)\Big\arrowvert^6 \Bigg\}\\
%&\le  \frac{\Arrowvert R_{d,n} \Arrowvert_{S_\infty}^6\Arrowvert E_{x,d,n} \Arrowvert_{S_\infty}^6}{(\Im z)^{24}}\left(\E\left(2\max_i\left\arrowvert\lambda_{i,d,n}^{(k)}\right\arrowvert^2+4\arrowvert z
%\arrowvert^2\right)^{12}\right)^{1/2} \\
%&\hspace{3cm} \times\left(\E\Big\arrowvert \frac{1}{d} \tr \big(Z_kZ_k^\ast -I_{d\times d}\big)\Big\arrowvert^{12} \right)^{1/2}\\
%&\leq\ \frac{c}{d^6(\Im z)^6}\left\Arrowvert R_{d,n}\hat G_{d,n}^{(k)}(z)E_{x,d,n}\left(D_{d,n}^{(k)}\right)^{-1}\right\Arrowvert_{S_{\infty}}^6d^3\\
&\leq  \frac{c}{d^3(\Im z)^{24}}\left\Arrowvert R_{d,n}\right\Arrowvert_{S_{\infty}}^6\left\Arrowvert E_{x,d,n}\right\Arrowvert_{S_{\infty}}^6\Big(\Arrowvert S_{d,n} \Arrowvert_{S_\infty}^{12}+\Arrowvert R_{d,n} \Arrowvert_{S_\infty}^{12} +|z|^{12} \Big),\\
\E \left\arrowvert f_{k,x}^{[3]}\right\arrowvert^6 &\leq  \frac{c}{(\Im z)^{24}d^6}\left\Arrowvert R_{d,n}\right\Arrowvert_{S_{\infty}}^{6}\left\Arrowvert E_{x,d,n}\right\Arrowvert_{S_{\infty}}^6\Big(\Arrowvert S_{d,n} \Arrowvert_{S_\infty}^{12}+\Arrowvert R_{d,n} \Arrowvert_{S_\infty}^{12} +|z|^{12} \Big)\\
&\hspace{0.5cm}+ \frac{c}{(\Im z)^{60}n^6}\left\Arrowvert R_{d,n}\right\Arrowvert_{S_{\infty}}^{18}\left\Arrowvert E_{x,d,n}\right\Arrowvert_{S_{\infty}}^6\Big(\Arrowvert S_{d,n} \Arrowvert_{S_\infty}^{36}+\Arrowvert R_{d,n} \Arrowvert_{S_\infty}^{36} +|z|^{36} \Big),\\
\E \left\arrowvert f_{k,x}^{[4]}\right\arrowvert^6 &\leq \frac{cd^3\left\Arrowvert R_{d,n}\right\Arrowvert_{S_{\infty}}^{12}\left\Arrowvert E_{x,d,n}\right\Arrowvert_{S_{\infty}}^6}{n^6(\Im z)^{42}}\Big(\Arrowvert S_{d,n} \Arrowvert_{S_\infty}^{24}+\Arrowvert R_{d,n} \Arrowvert_{S_\infty}^{24} +|z|^{24}\Big).
\end{align*}
In order to show finally \eqref{eq: Konvergenz}, it remains to note that for any $\varepsilon>0$,
\begin{align*}
\sum_{d=1}^{\infty}\P\left(\left\arrowvert \frac{1}{n}\sum_{k=1}^nf_{k,x}\right\arrowvert >\varepsilon\right)&\leq \sum_{d=1}^{\infty}\sum_{k=1}^n \sum_{l=1}^4 \P\left(\left\arrowvert f_{k,x}^{[l]}\right\arrowvert >\varepsilon/4\right)\\
&\leq  \sum_{d=1}^{\infty}\sum_{k=1}^n \sum_{l=1}^4\left(\frac{\varepsilon}{4}\right)^{-6}\E\left\arrowvert f_{k,x}^{[l]}\right\arrowvert^6< \infty
\end{align*}
by an application of the union bound, Markov's inequality, and \eqref{dn}. \eqref{eq: Konvergenz} is then a consequence of the Borel-Cantelli lemma.

\subsection{Step III: Existence and uniqueness of $e_{d,n}^{\circ}$}
We show that for any $d,n$ and $R_{d,n}$, there exists a unique $e(z)\in\C^+$ which solves the fixed point equation
\begin{equation}\label{eq: fixed point}
e^\circ_{d,n}(z) = \frac{1}{d}\tr\left\{R_{d,n}\left(\frac{1}{1+\frac{d}{n}e^\circ_{d,n}(z)}R_{d,n}-S_{d,n}-z I_{d\times d}\right)^{-1}\right\}, \ \ z\in\C^+.
\end{equation}
To this end, define for any fixed $d,n$ the subsequences $(d_l)_{l\in\N}$ and $(n_l)_{l\in\N}$, where $d_l=ld$ and $n_l=ln$, $l\in\N$, and correspondingly the $l$-block diagonal matrices
$$
R_{(d,n)_l} = \mathrm{diag}\left(R_{d,n},...., R_{d,n}\right)\ \ \ \text{and}\ \ \ S_{(d,n)_l} = \mathrm{diag}\left(S_{d,n},...., S_{d,n}\right)
$$
of size $dl\times dl$. Note that the right-hand side of \eqref{eq: fixed point} remains unchanged when
replacing $d,n,R_{d,n}$ and $I_{d\times d}$ by $d_l,n_l,R_{(d,n)_l}$ and $I_{d_l\times d_l}$. By \eqref{eq: intermediate2} of the previous section,
\begin{align*}
e_{(d,n)_l}(z)-\frac{1}{d_l}\tr\left\{R_{(d,n)_l}\left(\frac{1}{1+\frac{d_l}{n_l}e_{(d,n)_l}(z)}R_{(d,n)_l}-S_{(d,n)_l}-z I_{d_l\times d_l}\right)^{-1}\right\} \rightarrow 0
\end{align*}
a.s. as $l\to\infty$ with
$$
e_{(d,n)_l}(z) = \frac{1}{d_l}\tr\left\{R_{(d,n)_l}\left(\hat \Xi_{(d,n)_l}-z I_{d_l\times d_l}\right)^{-1}\right\},
$$
where
$$
\hat \Xi_{(d,n)_l}=\frac{1}{nl}\sum_{k=1}^{nl}R_{(d,n)_l}^{1/2}\check Z_{k,d_l,n_l}\check Z_{k,d_l,n_l}^\ast R_{(d,n)_l}^{1/2}-S_{(d,n)_l},
$$
$\check Z=(\check Z_{ik})_{i,k\in\N}$ is a double array of iid Rademacher variables, and $\check Z_{k,d,n}$ is the $k$-th column of the submatrix $\check Z_{d,n}=(\check Z_{ik,d,n})_{i\le d,~k\le n}.$
Consider a realization of these random variables where this convergence occurs. First note by \eqref{eq: stieltjes3},
$$
\left\arrowvert e_{(d,n)_l}(z)\right\arrowvert \ \leq\ \frac{\kappa}{\Im z}\ \ \ \forall\ l\in\N.
$$
By Bolzano-Weierstra{\ss}, there exists a convergent subsequence of $(e_{(d,n)_l})$ with limit $e(z)$, say, such that in particular
\begin{equation}\label{eq: ek-e}
\frac{1}{1+\frac{d_l}{n_l}e_{(d,n)_l}(z)}\ \ \rightarrow\ \ \frac{1}{1+\frac{d}{n}e(z)}
\end{equation}
along this subsequence due to \eqref{eq: Stieltjes16} for $e_{(d,n)_l}(z)$. By \eqref{eq: intermediate2}, $e(z)$ solves the fixed point equation \eqref{eq: fixed point}. As $\Im\left(e_{(d,n)_l}(z)\right)>0$ for any $l\in \N$ and $z\in\C^+$, it follows that its limit satisfies $\Im\left(e(z)\right)\geq 0$ and therefore $\Im\left(e(z)\right)>0$, because $\Im (e(z))=0$ contradicts with $e(z)$ being a solution of the fixed point equation. Consequently, any such solution $e$ of  \eqref{eq: fixed point} enjoys the following two properties:
\begin{equation}
e: \C^+\rightarrow\C^+
\end{equation}
and
\begin{equation}
\arrowvert e(z)\arrowvert \leq \frac{\kappa}{\Im z}\ \ \ \forall\ z\in\C^+.\label{eq: e circ im bound}
\end{equation}
It remains to show uniqueness. Denoting
\begin{equation}\label{eq: D}
\tilde{D}_{d,n}(z) = \tilde{D}_{d,n}(z,e(z))  = \frac{1}{1+\frac{d}{n}e(z)}R_{d,n}-S_{d,n}-zI_{d\times d},
\end{equation}
we obtain the representation
\begin{align*}
e(z) &=\frac{1}{d}\tr\left(\tilde{D}_{d,n}^{-1}(z)R_{d,n}\right)\\
&= \frac{1}{d} \tr\left(\tilde{D}_{d,n}^{-1}(z)R_{d,n}\left(\tilde{D}_{d,n}^{\ast}(z)\right)^{-1} \left[\frac{1}{1+\frac{d}{n}e(z)^*}R_{d,n}-S_{d,n}-z^*I_{d\times d}\right]\right).
\end{align*}
Note that $
(A^\ast)^{-1} = (A^{-1})^\ast.
$
Now, the expression
\begin{align}\label{eq: >0}
\tr\left(\tilde{D}_{d,n}^{-1}(z)R_{d,n}\left(\tilde{D}_{d,n}^\ast(z)\right)^{-1}S_{d,n}\right) %&= \tr\left(\tilde{D}_{d,n}^{-1}\left(\tilde{D}_{d,n}^\ast\right)^{-1}R_{d,n}S_{d,n}\right)
\geq 0
\end{align}
is in particular real because the trace of the product of two positive semidefinite Hermitian matrices is non-negative. Hence
\begin{align*}
\Im (e(z)) &= \frac{1}{d}\Im\left(\tr\left\{\tilde{D}_{d,n}^{-1}(z)R_{d,n}\left(\tilde{D}_{d,n}^\ast(z)\right)^{-1}\left(\frac{1}{1+\frac{d}{n}e(z)^*}R_{d,n}-z^*I_{d\times d}\right)\right\}\right)\\
&= \frac{1}{d}\tr\left\{\tilde{D}_{d,n}^{-1}(z)R_{d,n}\left(\tilde{D}_{d,n}^\ast(z)\right)^{-1}\left(\Im\left(\frac{1}{1+\frac{d}{n}e(z)^*}\right)R_{d,n}-\Im\left(z^*\right)I_{d\times d}\right)\right\}\\
&= \frac{1}{d}\tr\left\{\tilde{D}_{d,n}^{-1}(z)R_{d,n}\left(\tilde{D}_{d,n}^\ast(z)\right)^{-1}\left(\frac{(d/n)\Im(e(z))}{\left\arrowvert1+\frac{d}{n}e(z)\right\arrowvert^2}R_{d,n}+\Im\left(z\right)I_{d\times d}\right)\right\}\\
&= \alpha(e(z)) \Im(e(z)) +\beta(e(z)) \Im(z)
\end{align*}
with 
\begin{align*}
\alpha(e(z)) &=  \frac{1}{n}\left\arrowvert1+\frac{d}{n}e(z)\right\arrowvert^{-2}\tr\left\{\tilde{D}_{d,n}^{-1}(z)R_{d,n}\left(\tilde{D}_{d,n}^\ast(z)\right)^{-1}R_{d,n}\right\}\\
\beta(e(z)) &= \frac{1}{d}\tr\left\{\tilde{D}_{d,n}^{-1}(z)R_{d,n}\left(\tilde{D}_{d,n}^\ast(z)\right)^{-1}\right\}
\end{align*}
Note that both, $\alpha$ and $\beta$, are non-negative, and $\alpha(e(z))>0$ implies $\beta(e(z))>0$ since the trace of a positive semidefinite Hermitian matrix equals zero only for the null matrix.
If $\bar{e}(z)$ is another solution of \eqref{eq: fixed point}, we obtain the analogous identity
$$
\Im\left(\bar{e}(z)\right) = \alpha\left(\bar{e}(z)\right)\Im\left(\bar{e}(z)\right)\ +\ \beta\left(\bar{e}(z)\right)\Im\left(z\right).
$$
We denote by $\bar{D}_{d,n}(z)$ the matrix $\tilde{D}_{d,n}(z)$ as defined in \eqref{eq: D} with $\bar{e}(z)$ in place of $e(z)$, and define $\alpha(\bar{e}(z))$ and $\beta(\bar{e}(z))$ correspondingly.  Then
\begin{align}
e(z)-\bar{e}(z) &=\ \frac{1}{d}\tr\left\{\left(\tilde{D}_{d,n}^{-1}(z)-\bar{D}_{d,n}^{-1}(z)\right)R_{d,n}\right\}\notag\\
&= \frac{1}{d}\tr\left\{\tilde{D}_{d,n}^{-1}(z)\left(\bar{D}_{d,n}(z)-\tilde{D}_{d,n}(z)\right)\bar{D}_{d,n}^{-1}(z)R_{d,n}\right\}\notag\\
&= \frac{1}{d}\tr\left\{\tilde{D}_{d,n}^{-1}(z)\frac{\left(1+\frac{d}{n}{e(z)}\right)-\left(1+\frac{d}{n}\bar{e}(z)\right)}{\left(1+\frac{d}{n}\bar{e}(z)\right)\left(1+\frac{d}{n}{e(z)}\right)}R_{d,n}\bar{D}_{d,n}^{-1}(z)R_{d,n}\right\}\notag\\
&=  \left(e(z)-\bar{e}(z)\right)\frac{d/n}{\left(1+\frac{d}{n}\bar{e}(z)\right)\left(1+\frac{d}{n}e(z)\right)}\frac{1}{d}\tr\left\{\tilde{D}_{d,n}^{-1}(z)R_{d,n}\bar{D}_{d,n}^{-1}(z)R_{d,n}\right\}\notag\\
&=:  \left(e(z)-\bar{e}(z)\right)\gamma. \label{eq: gamma}
\end{align}
If $\gamma=0$, uniqueness of $e(z)$ follows immediately. In case $\gamma\not=0$, we deduce the inequality
\begin{align}
\arrowvert\gamma\arrowvert &\leq \left[\frac{d/n}{\left\arrowvert 1+\frac{d}{n}e(z)\right\arrowvert^2}\frac{1}{d}\tr\left\{\tilde{D}_{d,n}^{-1}(z)R_{d,n}\left(\tilde{D}^\ast_{d,n}(z)\right)^{-1}R_{d,n}\right\}\right]^{1/2}\notag \\
&\hspace{2cm} \times \left[\frac{d/n}{\left\arrowvert 1+\frac{d}{n}\bar{e}(z)\right\arrowvert^2}\frac{1}{d}\tr\left\{\bar{D}_{d,n}^{-1}(z)R_{d,n}\left(\bar{D}^\ast_{d,n}(z)\right)^{-1}R_{d,n}\right\}\right]^{1/2}\notag \\
&= \sqrt{\alpha(e(z))}\cdot\sqrt{\alpha(\bar{e}(z))}\label{eq: alpha}\\
&=  \left(\frac{\Im(e(z))\alpha(e(z))}{\Im(e(z))\alpha(e(z))+\Im(z)\beta(e(z))}\right)^{1/2}\notag\\
&\hspace{2cm}\times \left(\frac{\Im(\bar{e}(z))\alpha(\bar{e}(z))}{\Im(\bar{e}(z))\alpha(\bar{e}(z))+\Im(z)\beta(\bar{e}(z))}\right)^{1/2}\notag.
\end{align}
But $\beta(e(z)),\beta(\bar{e}(z))>0$ for $\alpha(e(z)),\alpha(\bar e(z))>0$ which implies $\arrowvert \gamma\arrowvert<1$ and therefore, $e=\bar{e}$.

\subsection{Step IV: Identification of $e_{d,n}^\circ$ and $m_{d,n}^\circ$ as Stieltjes transforms}
As concerns $e_{d,n}^{\circ}$, we know already that $e_{d,n}^{\circ}:\C^+\rightarrow \C^+$. Its analyticity follows by the analyticity of the pointwise approximating sequence $e_{(d,n)_l}$ and the local boundedness of $(e_{(d,n)_l})$ on $\C^+$. Note that the pointwise convergence occurs simultaneously on a countable set with a accumulation point in $\C^+$ with probability 1. Using on the right hand side of \eqref{eq: fixed point} the fact that $e_{d,n}^{\circ}(z)\rightarrow 0$ as $\Im(z)\rightarrow\infty$ which follows from \eqref{eq: e circ im bound}, we also have
\begin{align*}
z\cdot e_{d,n}^{\circ}(z)\ \rightarrow \ -\frac{1}{d}\tr\left(R_{d,n}\right)\ \ \ \text{as $\Im(z),\Re(z)\rightarrow\infty$}.
\end{align*}
Hence, Lemma 2.2 in \cite{Shohat1943} implies that $e_{d,n}^{\circ}$ is the Stieltjes transform of a measure on the real line with total mass $d^{-1}\tr(R_{d,n})$.\\
Define
\begin{equation}
D_{d,n}^{\circ}(z) = \frac{1}{1+\frac{d}{n}e_{d,n}^{\circ}(z)}R_{d,n}-S_{d,n}-zI_{d\times d}.\label{eq: def D-circ}
\end{equation}
 Finally, observe that for any $z\in\C^+$,
\begin{align}
\Im\left(m_{d,n}^{\circ}(z)\right) &= \frac{1}{d}\Im\tr\left\{D_{d,n}^\circ(z)^{-1}\left(\left(D_{d,n}^\circ(z)\right)^\ast\right)^{-1}\left(D_{d,n}^\circ(z)\right)^\ast\right\} \notag \\
&= \frac{1}{d}\Im\tr\Bigg\{D_{d,n}^\circ(z)^{-1}\left(\left(D_{d,n}^\circ(z)\right)^\ast\right)^{-1}\notag\\
&\hspace{2cm}\times\Bigg(\frac{1}{1+\frac{d}{n}(e_{d,n}^{\circ}(z))^\ast}R_{d,n}-z^\ast I_{d\times d}\Bigg)\Bigg\} \notag \\
&=\frac{1}{n}\frac{\Im\left(e_{d,n}^\circ(z)\right)}{\left|1+\frac{d}{n}e_{d,n}^\circ(z)\right|^2}\tr\left(D_{d,n}^\circ(z)^{-1}\left(\left(D_{d,n}^\circ(z)\right)^\ast\right)^{-1}R_{d,n}  \right)\notag\\
&\hspace{1cm}+\frac{1}{d}\Im(z)\tr\left(D_{d,n}^\circ(z)^{-1}\left(\left(D_{d,n}^\circ(z)\right)^\ast\right)^{-1}\right)\notag\\
&>0 \label{eq: im m}
%&= \frac{1}{d}\sum_{i=1}^d\Im\left\{\frac{1}{\frac{1}{1+(d/n)e_{d,n}^{\circ}(z)}\left(R_{d,n}\right)_{ii}-\left(S_{d,n}\right)_{ii}-z}\right\} \notag \\
%&= \frac{1}{d}\sum_{i=1}^d-\frac{\left(R_{d,n}\right)_{ii}\Im \left(\frac{1}{1+(d/n)e_{d,n}^{\circ}(z)}\right)-\Im(z)}{\left\arrowvert \frac{1}{1+(d/n)e_{d,n}^{\circ}(z)}\left(R_{d,n}\right)_{ii}-\left(S_{d,n}\right)_{ii}-z\right\arrowvert^2} \notag \\
%&= \frac{1}{d}\sum_{i=1}^d-\frac{\left(R_{d,n}\right)_{ii}\frac{d}{n}\left[-\Im\left(e_{d,n}^{\circ}(z)\right)\frac{1}{ \left\arrowvert1+(d/n)e_{d,n}^{\circ}(z)\right\arrowvert^2}\right]-\Im(z)}{\left\arrowvert \frac{1}{1+(d/n)e_{d,n}^{\circ}(z)}\left(R_{d,n}\right)_{ii}-\left(S_{d,n}\right)_{ii}-z\right\arrowvert^2} \notag \\
%&= \frac{1}{d}\sum_{i=1}^d\frac{\left(R_{d,n}\right)_{ii}\frac{d}{n}\Im\left(e_{d,n}^{\circ}(z)\right)\frac{1}{ \left\arrowvert1+(d/n)e_{d,n}^{\circ}(z)\right\arrowvert^2}+\Im(z)}{\left\arrowvert \frac{1}{1+(d/n)e_{d,n}^{\circ}(z)}\left(R_{d,n}\right)_{ii}-\left(S_{d,n}\right)_{ii}-z\right\arrowvert^2} >0, \label{eq: im m}
%&= \frac{1}{d}\Im\tr\left\{\left(\frac{1}{1+\frac{d}{n}e_{d,n}^{\circ}(z)}R_{d,n}-S_{d,n}-zI_{d\times d}\right)^{-1}\right\}\frac{1}{1+\frac{d}{n}e_{d,n}^{\circ}(z)}R_{d,n}-S_{d,n}-zI_{d\times d}\right)^{-1}
\end{align}
since both $\Im(z)$ and $\Im\left(e_{d,n}^{\circ}(z)\right)$ are strictly positive. Furthermore, since $e_{d,n}^{\circ}(z)\rightarrow 0$ as $\Im(z)\rightarrow\infty$ by \eqref{eq: e circ im bound}, we conclude 
\begin{align*}
z\cdot m_{d,n}^{\circ}(z) \rightarrow -1\ \ \ \text{as $\Im(z),\Re(z)\rightarrow\infty$}.
\end{align*}
As above, $m_{d,n}^{\circ}$ is the Stieltjes transform of a measure on the real line with total mass $1$. 

\subsection{Step V: Approximation of $e_{d,n}$ by $e_{d,n}^{\circ}$}
Let $e_{d,n}^{\circ}$ denote the solution of \eqref{eq: fixed point}. We will show that for any $z\in\C^+$,
\begin{equation}\label{eq: konvergenz 3}
e_{d,n}(z)-e_{d,n}^{\circ}(z) \rightarrow 0\ \ a.s.\ \ \text{as}\ \ d\rightarrow\infty.
\end{equation}
Define
$$
\alpha^{\circ}(z) = \alpha\left(e_{d,n}^{\circ}(z)\right)\ \ \ \text{and}\ \ \ \beta^{\circ}(z) = \beta\left(e_{d,n}^{\circ}(z)\right)
$$
such that
\begin{equation}\label{eq: circs}
\Im\left(e_{d,n}^{\circ}(z)\right) = \alpha^{\circ}(z)\Im\left(e_{d,n}^{\circ}\right)+\beta^{\circ}(z)\Im(z).
\end{equation}
Noting that
\begin{align*}
\frac{\alpha^{\circ}(z)}{\beta^{\circ}(z)} \leq \Arrowvert R_{d,n}\Arrowvert_{S_{\infty}}\frac{d}{n}\left\arrowvert1+\frac{d}{n}e_{d,n}^{\circ}(z)\right\arrowvert^{-2},
\end{align*}
we deduce
\begin{align}
\Im\left(e_{d,n}^{\circ}(z)\right)\frac{\alpha^{\circ}(z)}{\beta^{\circ}(z)}& \leq \Im\left(e_{d,n}^{\circ}(z)\right)\Arrowvert R_{d,n}\Arrowvert_{S_\infty}\frac{d}{n}\left\arrowvert1+\frac{d}{n}e_{d,n}^{\circ}(z)\right\arrowvert^{-2}\label{eq: doch number}\\
&= - \Arrowvert R_{d,n}\Arrowvert_{S_\infty}\Im\left(\frac{1}{1+\frac{d}{n}e_{d,n}^{\circ}(z)}\right)\nonumber\\
&\le  \Arrowvert R_{d,n}\Arrowvert_{S_\infty} \left\arrowvert \frac{1}{1+\frac{d}{n}e_{d,n}^{\circ}(z)}\right\arrowvert \nonumber\\
&\leq \Arrowvert R_{d,n}\Arrowvert_{S_\infty}\limsup_{l\to\infty}\frac{2\max_i\left\arrowvert \lambda_{i}\left(\hat\Xi_{(d,n)_l}\right)\right\arrowvert^2+4|z|^2}{\Im(z)^2},\label{eq: haesslich}
\end{align}
where the last inequality follows by convergence  \eqref{eq: ek-e} and bound \eqref{eq: bound3} (in the latter the eigenvalues corresponding to $\hat \Xi_{(d,n)_l}$ have to be inserted). As a consequence,
\begin{align}
\alpha^{\circ}(z) &= \left(\frac{\Im\left(e_{d,n}^{\circ}(z)\right)\alpha^{\circ}(z)}{(\Im z)\beta^{\circ}(z) +\Im\left(e_{d,n}^{\circ}(z)\right)\alpha^{\circ}(z)}\right)\label{eq: first id}\\
&\leq \frac{2\Arrowvert R_{d,n}\Arrowvert_{S_\infty}\Arrowvert\hat \Xi_{d,n}\Arrowvert_{S_\infty}^2+4|z|^2}{(\Im z)^3+2\Arrowvert R_{d,n}\Arrowvert_{S_\infty}\Arrowvert\hat \Xi_{d,n}\Arrowvert_{S_\infty}^2+4|z|^2},\label{eq: last in}
\end{align}
where the first identity \eqref{eq: first id} follows by rearrangement of \eqref{eq: circs}, and after expanding the fraction by $(\beta^{\circ}(z))^{-1}$ we used the elementary inequality
$$
\frac{x}{y+x}\ \leq\ \frac{z}{y+z}\ \ \ \text{for $x,y,z>0$ and $x\leq z$}
$$
and \eqref{eq: haesslich} in \eqref{eq: last in}. 
By \eqref{eq: e},
\begin{equation*}
e_{d,n}(z) = \frac{1}{d}\tr\left(R_{d,n}D_{d,n}^{-1}(z)\right)- \frac{1}{n}\sum_{k=1}^nf_{k,e}.
\end{equation*}
Then as previously in \eqref{eq: >0} and the subsequent display, we obtain the representation
\begin{align}
\Im\left(e_{d,n}(z)\right)\ &=\ \frac{1}{d}\Im\left(\frac{1}{1+\frac{d}{n}e_{d,n}^\ast(z)}\right)\tr\left\{{D}_{d,n}^{-1}(z)R_{d,n}\left({D}_{d,n}^\ast(z)\right)^{-1}R_{d,n}\right\} \notag\\
&\hspace{1cm}-\frac{1}{d}\Im\left(z^*\right)\tr\left\{{D}_{d,n}^{-1}(z)R_{d,n}\left({D}_{d,n}^\ast(z)\right)^{-1}\right\}-\frac{1}{n}\sum_{k=1}^n\Im\left(f_{k,e}\right) \notag\\
&=\ \Im\left(e_{d,n}(z)\right)\alpha\left(e_{d,n}(z)\right)+\Im(z)\beta\left(e_{d,n}(z)\right) -\frac{1}{n}\sum_{k=1}^n\Im\left(f_{k,e}\right), \label{eq: alpha fehler}
\end{align}
and as in \eqref{eq: gamma},
\begin{equation}\label{eq: e identity}
e_{d,n}(z)-e_{d,n}^{\circ}(z)=\ \gamma\left(e_{d,n}(z)-e_{d,n}^{\circ}(z)\right)-\frac{1}{n}\sum_{k=1}^nf_{k,e}
\end{equation}
with 
\begin{equation}
\arrowvert\gamma\arrowvert\leq\sqrt{\alpha^{\circ}(z)\alpha(e_{d,n}(z))}.\label{eq: alpha circ alpha}
\end{equation} 
Consider a realization for which the convergence
$$
\frac{1}{n}\sum_{k=1}^nf_{k,e}\ \rightarrow\ 0
$$
occurs. Then in particular,
\begin{equation}\label{eq: beta2}
\left\arrowvert \frac{1}{n}\sum_{k=1}^nf_{k,e}\right\arrowvert  \leq \Im(z)\frac{n}{4d\left(\Arrowvert R_{d,n} \Arrowvert_{S_\infty}\vee 1\right)}  \left(\frac{2\Arrowvert \hat \Xi_{d,n}\Arrowvert_{S_\infty}^2+4\arrowvert z\arrowvert^2}{\Im(z)^2}\right)^{-2}
\end{equation}
for sufficiently large $d$. Recall that by definition of $\alpha(e_{d,n}(z))$ and $\beta(e_{d,n}(z))$,
\begin{align}\label{eq: mal wieder}
\frac{\alpha(e_{d,n}(z))}{\beta(e_{d,n}(z))} \leq \Arrowvert R_{d,n}\Arrowvert_{S_{\infty}}\frac{d}{n}\left\arrowvert1+\frac{d}{n}e_{d,n}(z)\right\arrowvert^{-2}.
\end{align}
Hence, if 
$$
\beta(e_{d,n}(z)) \leq \frac{n}{4d\left(\Arrowvert R_{d,n} \Arrowvert_{S_\infty}\vee 1\right)}  \left(\frac{2\Arrowvert \hat \Xi_{d,n}\Arrowvert_{S_\infty}^2+4\arrowvert z\arrowvert^2}{\Im(z)^2}\right)^{-2}, 
$$
then inserting \eqref{eq: bound3} into
\eqref{eq: mal wieder} yields 
\begin{align*}
\alpha(e_{d,n}(z)) &\leq \Arrowvert R_{d,n} \Arrowvert_{S_\infty}\frac{d}{n}\left(\frac{2\Arrowvert \hat \Xi_{d,n}\Arrowvert_{S_\infty}^2+4\arrowvert z\arrowvert^2}{\Im(z)^2}\right)^{2} \beta(e_{d,n}(z)) \leq \frac{1}{4},
\end{align*}
in which case \eqref{eq: alpha circ alpha} implies $\arrowvert\gamma\arrowvert\leq 1/2$ since $\alpha^{\circ}(z)\leq 1$ by \eqref{eq: first id} and the non-negativity of $\alpha^{\circ}(z),\beta^{\circ}(z)$ and $\Im(e_{d,n}^{\circ}(z))$. Otherwise, if 
$$
\beta(e_{d,n}^\circ(z)) > \frac{n}{4d\left(\Arrowvert R_{d,n} \Arrowvert_{S_\infty}\vee 1\right)}  \left(\frac{2\Arrowvert \hat \Xi_{d,n}\Arrowvert_{S_\infty}^2+4\arrowvert z\arrowvert^2}{\Im(z)^2}\right)^{-2}, 
$$
\eqref{eq: alpha circ alpha}, \eqref{eq: alpha fehler}, \eqref{eq: beta2}, and \eqref{eq: last in} imply
\begin{align}
\arrowvert\gamma\arrowvert &\leq \sqrt{\alpha^{\circ}(z)}\left(\frac{\Im\left(e_{d,n}(z)\right)\alpha(e_{d,n}(z))}{\Im\left(e_{d,n}(z)\right)\alpha(e_{d,n}(z))+\Im(z)\beta(e_{d,n}(z))-\frac{1}{n}\sum_{k=1}^n\Im\left(f_{k,e}\right)}\right)^{1/2}\nonumber\\
&\leq \left(\frac{2\Arrowvert R_{d,n}\Arrowvert_{S_\infty}\Arrowvert\hat \Xi_{d,n}\Arrowvert_{S_\infty}^2+4|z|^2}{(\Im z)^3+2\Arrowvert R_{d,n}\Arrowvert_{S_\infty}\Arrowvert\hat \Xi_{d,n}\Arrowvert_{S_\infty}^2+4|z|^2}\right)^{1/2}.\nonumber
\end{align}
As $d\to\infty$ the limes superior of the last expression is bounded by some positive constant $\tilde\gamma(z)<1$ almost surely. Finally, solving the equation \eqref{eq: e identity} for $e_{d,n}-e_{d,n}^{\circ}$ and using the upper bounds on $\arrowvert\gamma\arrowvert$, we obtain 
\begin{align}
\left\arrowvert e_{d,n}(z)-e_{d,n}^{\circ}(z)\right\arrowvert &\leq \frac{\left\arrowvert \frac{1}{n}\sum_{k=1}^nf_{k,e}\right\arrowvert }{1- \left(\frac{1}{4}\vee\tilde\gamma(z)\right)}\notag \\ 
&\rightarrow\ 0\ \ \text{a.s.} \label{eq: conv e}
\end{align}
as $d\rightarrow\infty$, by \eqref{eq: Konvergenz}. This proves \eqref{eq: konvergenz 3}.

\subsection{Step VI: Approximation of $m_{d,n}$ by $m_{d,n}^{\circ}$}
Without loss of generality we may assume that either
$$\frac{d^{\frac{3}{2}}}{n}>1 \ \ \text{or} \ \ \frac{d^{\frac{3}{2}}}{n}\le 1$$
holds on the whole sequence.
We start with the first case. Recall the definition \eqref{eq: def m-circ} of $m_{d,n}^{\circ}$ and \eqref{eq: def D-circ} of $D_{d,n}^{\circ}(z)$, and note that
$$
m_{d,n}^{\circ}(z)=\frac{1}{d}\tr\left( \left(D_{d,n}^{\circ}(z)\right)^{-1}\right),
$$
while by \eqref{eq: m},
$$
m_{d,n}=\frac{1}{d}\tr\left(\left(D_{d,n}(z)\right)^{-1}\right)-\frac{1}{n}\sum_{k=1}^nf_{k,m}
$$
with
$$
\frac{1}{n}\sum_{k=1}^nf_{k,m}\rightarrow 0 \ \ \text{a.s. as }d\to\infty.
$$
Then,
\begin{align*}
m_{d,n}(z)-m_{d,n}^{\circ}(z) &= \frac{1}{d}\tr \left\{D_{d,n}^{-1}(z)-\left(D_{d,n}^{\circ}(z)\right)^{-1}\right\}-\frac{1}{n}\sum_{k=1}^nf_{k,m}\\
&= \frac{1}{d}\tr \left\{D_{d,n}^{-1}(z)\left(D_{d,n}^{\circ}(z)-D_{d,n}(z)\right)\left(D_{d,n}^{\circ}(z)\right)^{-1}\right\}-\frac{1}{n}\sum_{k=1}^nf_{k,m}\\
&=  \frac{1}{n}\frac{e_{d,n}(z)-e_{d,n}^{\circ}(z)}{(1+\frac{d}{n}e_{d,n}(z))(1+\frac{d}{n}e_{d,n}^\circ(z))}\tr\left\{D_{d,n}^{-1}(z)R_{d,n}\left(D_{d,n}^{\circ}(z)\right)^{-1}\right\}\\
&\hspace{6cm}-\frac{1}{n}\sum_{k=1}^nf_{k,m}.
\end{align*}
So, almost surely by \eqref{eq: spectralnormD}, \eqref{eq: bound3} and \eqref{eq: conv e},
\begin{align*}
&\limsup_{d\to\infty}\arrowvert m_{d,n}-m_{d,n}^\circ\arrowvert\\
&\hspace{1cm}\le \limsup_{d\to\infty} \arrowvert e_{d,n}-e_{d,n}^\circ \arrowvert \limsup_{d\to\infty}\frac{d}{n}\Arrowvert R_{d,n}\Arrowvert_{S_\infty}\frac{\big(2\Arrowvert\hat \Xi_{d,n}\Arrowvert_{S_\infty}+4\arrowvert z\arrowvert^4\big)^2}{\Im(z)^6}\\
&\hspace{1cm}=0.
\end{align*}
Now, consider the case
$$\frac{d^{\frac{3}{2}}}{n}\le 1.$$
Due to
$$\frac{d}{n}|e_{d,n}^\circ|\le \frac{d}{n}\frac{\sup_d\Arrowvert R_{d,n}\Arrowvert_{S_\infty}}{\Im z}\longrightarrow 0$$
for any $z\in C^+$ and by reasons of continuity, we conclude
$$\left|m_{d,n}^\circ(z)-m_{\mu^{T_{d,n}}}(z)\right|\rightarrow 0$$
for any $z\in \C^+$, where $\mu^{T_{d,n}}$ is the spectral measure of the matrix $T_{d,n}$. Therefore, it remains to show that
$$\left|m_{d,n}(z)-m_{\mu^{T_{d,n}}}(z)\right|\rightarrow 0.$$
By Lemma \ref{7.7} and Lemma \ref{7.8}  this convergence holds true if $d_L(\mu_{d,n},\mu^{T_{d,n}})\rightarrow 0$.
Theorem \ref{theorem: A38} for $\alpha=1$ and inequality (1.2) of \cite{Li1999} yield
$$d_L^2\left(\mu_{d,n},\mu^{T_{d,n}}\right)\le \frac{1}{d}\sum_{i=1}^d\left|\lambda_{i}(\Xi_{d,n})-\lambda_{i}(T_{d,n})\right|\le \left\Arrowvert \frac{1}{n}R_{d,n}^{1/2}X_{d,n}X_{d,n}^\ast R_{d,n}^{1/2}-R_{d,n} \right\Arrowvert_{S_\infty}.$$
Finally, for arbitrary $\varepsilon>0$ and $d$ sufficiently large we apply Corollary 5.50 of \cite{Vershynin2011} with $t=1$ so that
$$
\left\Arrowvert \frac{1}{n}R_{d,n}^{1/2}X_{d,n}X_{d,n}^\ast R_{d,n}^{1/2}-R_{d,n} \right\Arrowvert_{S_\infty}\le \varepsilon
$$
with probability at least $1-2\exp(-d)$. Again, by the Borel-Cantelli lemma,
$$
d_L\left(\mu_{d,n},\mu^{T_{d,n}}\right)\le \left\Arrowvert \frac{1}{n}R_{d,n}^{1/2}X_{d,n}X_{d,n}^\ast R_{d,n}^{1/2}-R_{d,n} \right\Arrowvert_{S_\infty}^{1/2}\rightarrow 0
$$
almost surely as $d\to\infty$.
\subsection{Step VII: Weak approximation of the spectral measures} First we show that the measure $\mu_{d,n}^\circ$ has compact support. Thereto, define similarly to the definition of $e_{(d,n)_l},~l\in\N$ in Step III,
$$m_{(d,n)_l}(z)=\frac{1}{d_l}\tr\left\{\left(\hat\Xi_{(d,n)_l}-zI_{d_l\times d_l}\right)^{-1}\right\}.$$
By \eqref{eq: intermediate1},
$$m_{(d,n)_l}-\frac{1}{d_l}\tr\left\{\left(\frac{1}{1+\frac{d}{n}e_{(d,n)_l}(z)}R_{(d,n)_l}-S_{(d,n)_l}-zI_{d_l\times d_l}\right)^{-1}\right\}\rightarrow 0 \ \ \text{as }l\to\infty$$
almost surely. Note that
\begin{align*}
&\frac{1}{d_l}\tr\left\{\left(\frac{1}{1+\frac{d}{n}e_{(d,n)_l}(z)}R_{(d,n)_l}-S_{(d,n)_l}-zI_{d_l\times d_l}\right)^{-1}\right\}\\
&\hspace{4cm}=\frac{1}{d}\tr\left\{\left(\frac{1}{1+\frac{d}{n}e_{(d,n)_l}(z)}R_{d,n}-S_{d,n}-zI_{d\times d}\right)^{-1}\right\},
\end{align*}
and therefore by reasons of continuity
$$m_{(d,n)_l}-\frac{1}{d}\tr\left\{\left(\frac{1}{1+\frac{d}{n}e_{d,n}^\circ(z)}R_{d,n}-S_{d,n}-zI_{d\times d}\right)^{-1}\right\} \rightarrow 0 \ \ \text{as }l\to\infty$$
almost surely because of \eqref{eq: konvergenz 3}. This implies that $\mu_{d,n}^\circ$ is the weak limit of $\mu_{(d,n)_l}$, and in particular the support of $\mu_{d,n}^\circ$ is bounded since
\begin{align*}
%a^\circ_{d,n}=
\left\arrowvert\inf\left\{x:\mu_{d,n}^\circ((-\infty,x])>0\right\}\right\arrowvert\ge \liminf_{l\to\infty} \lambda_{dl}\left(\hat\Xi_{{(d,n)}_l}\right)\ge -\Arrowvert S_{d,n} \Arrowvert_{S_\infty}
\end{align*}
and
\begin{align*}
%b^\circ_{d,n}=
\left\arrowvert\sup\left\{x:\mu_{d,n}^\circ((-\infty,x])<1\right\}\right\arrowvert\le \limsup_{l\to\infty} \big\Arrowvert \hat\Xi_{(d,n)_l}\big\Arrowvert_{S_\infty}\le \Arrowvert S_{d,n} \Arrowvert_{S_\infty}+c',
\end{align*}
where $c'>0$ is a constant satisfying inequality \eqref{c prime} of Lemma \ref{lemma: log d} applied to 
$$\frac{1}{nl}\sum_{k=1}^{nl}R_{(d,n)_l}^{1/2}\check Z_{k,d_l,n_l}\check Z_{k,d_l,n_l}^\ast R_{(d,n)_l}^{1/2},$$ 
and is chosen uniformly over $d\in\N$. Subsequently, we assume that $d$ (in dependence on the specific realization) is sufficiently large such that 
 $$\left\arrowvert\inf\left\{x:\mu_{d,n}((-\infty,x])>0\right\}\right\arrowvert\ge -\Arrowvert S_{d,n} \Arrowvert_{S_\infty}-c''$$
 and
 $$
 \left\arrowvert\sup\left\{x:\mu_{d,n}((-\infty,x])<1\right\}\right\arrowvert\le \Arrowvert S_{d,n} \Arrowvert_{S_\infty}+c''
 $$
 with an appropriate contant $c''>0$ from \eqref{c prime}. Now, define $c=c'\vee c''$.
For fixed $0<v< 1$, define the closed interval $K=\big[u_0,u_{\lfloor v^{-3} \rfloor+1}\big]$ 
with 
$$u_l=-v^{-1/4}\left(\left\|S_{d,n}\right\|_{S_{\infty}}+c\right)+\frac{2l}{\lfloor v^{-3} \rfloor+1}v^{-1/4}\left(\left\|S_{d,n}\right\|_{S_{\infty}}+c\right)$$ 
for $l=1,\dots,\lfloor v^{-3} \rfloor+1.$ By Step VI, we have $$|m_{d,n}(u_l+iv)-m_{d,n}^\circ(u_l+iv)|<v$$   simultaneously at all points $u_l,~l=0,\dots,\lfloor v^{-3} \rfloor+1,$ almost surely for all $d$ sufficiently large. Furthermore, for any inner point $u$ of $K$, pick $l$ such that $u\in [u_{l},u_{l+1})$. Then,
\begin{align*}
&\left\arrowvert m_{d,n}(u+iv)-m_{d,n}^\circ(u+iv) \right\arrowvert\\
&\hspace{0.5cm}\le \left\arrowvert m_{d,n}(u+iv)-m_{d,n}(u_l+iv) \right\arrowvert+\left\arrowvert m_{d,n}^\circ(u+iv)-m_{d,n}^\circ(u_l+iv) \right\arrowvert\\
&\hspace{1cm}+\left\arrowvert m_{d,n}(u_l+iv)-m_{d,n}^\circ(u_l+iv) \right\arrowvert \\
&\hspace{0.5cm}\le\int\left| \frac{1}{x-u-iv}-\frac{1}{x-u_l-iv}\right|\d\mu_{d,n}(x)\\
&\hspace{1cm}+\int\left| \frac{1}{x-u-iv}-\frac{1}{x-u_l-iv}\right|\d\mu_{d,n}^\circ(x)+v\\
&\hspace{0.5cm}\le \int\frac{u-u_l}{v^2}\d(\mu_{d,n}+\mu_{d,n}^\circ)(x)+v\\
&\hspace{0.5cm}\le v(4|u_0|+1).
\end{align*}
Next, we derive an upper bound on the integral
$$\int_{K^c}\left\arrowvert m_{d,n}(u+iv)-m_{d,n}^\circ(u+iv) \right\arrowvert\d u$$
which tends to zero for $v\to 0$. For this aim, we decompose the integral into
\begin{align*}
&\int_{K^c}\left\arrowvert m_{d,n}(u+iv)-m_{d,n}^\circ(u+iv) \right\arrowvert\d u\\ 
&\hspace{0.5cm}= \int_{(-\infty,u_0)}\left\arrowvert m_{d,n}(u+iv)-m_{d,n}^\circ(u+iv) \right\arrowvert\d u\\
&\hspace{3cm}+ \int_{(u_{\lfloor v^{-3} \rfloor+1},\infty)}\left\arrowvert m_{d,n}(u+iv)-m_{d,n}^\circ(u+iv) \right\arrowvert\d u
\end{align*}
We can use the same arguments for both integrals and therefore only consider the first one. By Fubini's theorem and the bounds on the support of $\mu_{d,n}$ and $\mu_{d,n}^\circ$,
\begin{align*}
&\int_{(-\infty,u_0)}\left\arrowvert m_{d,n}(u+iv)-m_{d,n}^\circ(u+iv) \right\arrowvert\d u\\
&\hspace{0.5cm}\le
\int\int\int_{(-\infty,u_0)}\left|\frac{1}{x-u-iv}-\frac{1}{y-u-iv}\right| \d u~\d \mu_{d,n}(x)~\d\mu_{d,n}^\circ(y) \\
&\hspace{0.5cm}\le
\int\int\int_{(-\infty,u_0)}\frac{|x-y|}{(u-v^{1/4}u_0)^2} \d u~\d \mu_{d,n}(x)~\d\mu_{d,n}^\circ(y) \\
&\hspace{0.5cm}\le
\frac{1}{(1-v^{1/4})|u_0|}\int\int|x-y| ~\d\mu_{d,n}(x)~\d\mu_{d,n}^\circ(y) \\
&\hspace{0.5cm}\le \frac{1}{(1-v^{1/4})|u_0|}\left(\int |x|  \d\mu_{d,n}(x)+\int |y|  \d\mu_{d,n}^\circ(y)\right)\\
&\hspace{0.5cm}\le 2\frac{v^{1/4}}{1-v^{1/4}}.
\end{align*}  
Now, by Lemma \ref{lemma: Levy distance} we conclude
\begin{align*}
d_L(\mu_{d,n},\mu^\circ_{d,n})&\le 2\sqrt{\frac{v}{\pi}}+ \frac{1}{2\pi}\int\left\arrowvert m_{d,n}(u+iv)-m_{d,n}^\circ(u+iv) \right\arrowvert\d u\\
&\le 2\sqrt{\frac{v}{\pi}}+\frac{1}{\pi}|u_0|(4|u_0|+1)v+\frac{2}{\pi}\frac{v^{1/4}}{1-v^{1/4}},
\end{align*}
where the inequalities hold almost surely for all $d$ sufficiently large. Hence, 
$$d_L(\mu_{d,n},\mu^\circ_{d,n})\longrightarrow 0$$ 
almost surely as $d\to \infty$. Lemma  \ref{7.8} yields finally
$\mu_{d,n}-\mu_{d,n}^{\circ}\Longrightarrow 0$ a.s.\hfill$\square$

\subsection{Proof of Corollary \ref{cor: 1}}
%It is well-known that the Stieltjes transform of the Mar\v{c}enko-Pastur law  with parameters $\left(y,\sigma^2/p_0\right)$ is the unique solution from $\C^+\to \C^+$ to
%$$
%s(z)=\left(y\frac{\sigma^2}{p_0}\cdot\frac{1}{1+\frac{\sigma^2}{p_0}s(z)}-z\right)^{-1}.
%$$
%In the special case $T_{d,n}=\sigma^2I_{d\times d}$ and $p_{d,n}=(p_0,\dots, p_0)\in(0,1)^d$, we have
%$$
%m_{d,n}^{\circ}\left(z-\sigma^2\frac{1-p_0}{p_0}\right)=\left(\frac{\sigma^2}{p_0}\frac{1}{1+\frac{d}{n}\frac{\sigma^2}{p_0}m_{d,n}^{\circ}\left(z-\sigma^2\frac{1-p_0}{p_0}\right)}-z\right)^{-1}.
%$$
As afore-mentioned to the corollary, 
$$\mu_{d,n}^\circ=\mu^{MP}_{\frac{d}{n},\frac{p_0}{\sigma^2}}\star \delta_{-\sigma^2\frac{1-p_0}{p_0}}.$$
Therefore, by the representation \eqref{eq: mp dichte} of the Mar\v{c}enko-Pastur distribution we deduce $$\mu_{d,n}^\circ\Longrightarrow \mu^{\text{MP}}_{y,\frac{\sigma^2}{p_0}}\star \delta_{-\frac{1-p_0}{p_0}\sigma^2},$$
such that
$$ \mu_{d,n}\Longrightarrow \mu^{\text{MP}}_{y,\frac{\sigma^2}{p_0}}\star \delta_{-\frac{1-p_0}{p_0}\sigma^2}. $$
Furthermore, if the left edge of the limiting distribution  $$\mu^{\text{MP}}_{y,\frac{\sigma^2}{p_0}}\star \delta_{-\frac{1-p_0}{p_0}\sigma^2}$$ is smaller than zero, then almost surely
$$\limsup_{d\to\infty}\lambda_{\min}(\hat \Xi_{d,n})<0.$$
For $y<1$ the left edge of the limiting distribution is smaller than zero if and only if $p_0<1-(1-\sqrt{y})^2$. \hfill$\square$

\section{Proof of Theorem \ref{theorem: extreme}}\label{section: thm 2}
We will show Theorem \ref{theorem: extreme} by means of the next proposition. The proof of the proposition is postponed to Appendix \ref{B}.
\begin{proposition}\label{proposition: sigma}
Let $(X(i,k))_{i,k\in\N}$ be a double array of iid centered random variables with unit variance and finite fourth moment, and denote by $X_{d,n}\in \R^{d\times n}$ its $d\times n$ submatrix in the upper left corner. Moreover, let $(A_{d,n})_{d,n}$, $A_{d,n}\in \R^{d\times d}$, be a sequence of symmetric random matrices and  $(B_{d,n})_{d,n}$, $B_{d,n}\in \R^{d\times n}$ be another sequence of random matrices such that $(A_{d,n},B_{d,n})$ and $X_{d,n}$ are independent. Let $d,n\to \infty$ and $d/n\to y>0$. If 
\begin{align}\label{eq: bound A B}
\limsup_{d\to\infty}\max_{i,j} |A_{ij,d,n}|\max_{i,k}B_{ik,d,n}^2< \alpha \ \ \text{a.s.}
\end{align}
for some absolute constant $\alpha>0$, then
\begin{align}
\limsup_{d\to \infty}\left\Arrowvert \frac{1}{n}A_{d,n}\circ\left(\left(X_{d,n}\circ B_{d,n}\right)\left(X_{d,n}\circ B_{d,n}\right)^\ast\right)\right\Arrowvert_{S_\infty}\le \alpha\left(1+\sqrt{y}\right)^2   \ \ \text{a.s.}
\end{align}
\end{proposition}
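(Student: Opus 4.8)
The plan is to prove the bound by the method of moments, after reducing to deterministic weight matrices. Since $(A_{d,n},B_{d,n})_{d,n}$ is independent of $(X(i,k))$, I would first condition on the $\sigma$-algebra it generates, so that by Fubini's theorem it suffices to treat \emph{fixed} sequences $(A_{d,n})$, $(B_{d,n})$ with $\max_{i,j}|A_{ij,d,n}|\,\max_{i,k}B_{ik,d,n}^2\le\alpha$ for all large $d$, and to show that then $\limsup_{d\to\infty}\|H_{d,n}\|_{S_\infty}\le\alpha(1+\sqrt y)^2$ a.s., where $H_{d,n}=\tfrac1n A_{d,n}\circ((X_{d,n}\circ B_{d,n})(X_{d,n}\circ B_{d,n})^\ast)$; this is enough because on the almost sure event in \eqref{eq: bound A B} every realisation eventually satisfies such a bound with some $\alpha'<\alpha$ in place of $\alpha$.

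The argument then rests on the diagonal representation
\[
H_{d,n}=\frac1n\sum_{k=1}^n\diag(X_{k,d,n})\,A^{(k)}_{d,n}\,\diag(X_{k,d,n}),\qquad A^{(k)}_{ij,d,n}:=B_{ik,d,n}B_{jk,d,n}A_{ij,d,n},
\]
where $X_{k,d,n}$ is the $k$-th column of $X_{d,n}$ and each $A^{(k)}_{d,n}$ is symmetric with entries bounded by $\alpha$ in modulus, so that $\|\diag(u)A^{(k)}_{d,n}\diag(u)\|_{S_\infty}\le\alpha\|u\|_2^2$ for $u\in\R^d$ by Cauchy--Schwarz. Using this, I would truncate the entries of $X_{d,n}$ at a level $\delta_d\sqrt n$ with $\delta_d\downarrow0$, $\delta_d\sqrt n\to\infty$, re-centre and re-normalise, as in the corresponding step of \cite{Bai1993}: the diagonal form reduces the control of the discarded part to the same scalar estimates used there, and the hypothesis $\E X_{11}^4<\infty$ makes this perturbation asymptotically negligible. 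Hence one may assume the entries of $X_{d,n}$ centred, of unit variance and bounded by $\delta_d\sqrt n$.

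For the main step, since $H_{d,n}$ is symmetric, $\|H_{d,n}\|_{S_\infty}^{2p}\le\tr(H_{d,n}^{2p})$, and expanding the trace (with $i_{2p+1}:=i_1$) gives
\[
\tr\big(H_{d,n}^{2p}\big)=\frac1{n^{2p}}\sum_{i_1,\dots,i_{2p}}\sum_{k_1,\dots,k_{2p}}\Big(\prod_{l=1}^{2p}A^{(k_l)}_{i_li_{l+1},d,n}\Big)\Big(\prod_{l=1}^{2p}X_{i_lk_l,d,n}X_{i_{l+1}k_l,d,n}\Big).
\]
Since $\big|\prod_{l}A^{(k_l)}_{i_li_{l+1},d,n}\big|\le\alpha^{2p}$ for every index configuration, and since the $X$-factors are exactly those occurring in the expansion of $\tr\big((\tfrac1n X_{d,n}X_{d,n}^\ast)^{2p}\big)$, taking expectations yields
\[
\E\,\tr\big(H_{d,n}^{2p}\big)\ \le\ \alpha^{2p}\,\frac1{n^{2p}}\sum_{i_1,\dots,i_{2p}}\sum_{k_1,\dots,k_{2p}}\Big|\E\prod_{l=1}^{2p}X_{i_lk_l,d,n}X_{i_{l+1}k_l,d,n}\Big|,
\]
and the last sum is precisely the one estimated in the classical moment-method proof of the Bai--Yin law, which for the truncated entries and a suitable growth $p=p(d)\to\infty$ is bounded by $C\,d\,((1+\sqrt y)^2+o(1))^{2p}$ (cf.\ \cite{Bai1993}, and \cite{Silverstein1995} for the limiting law behind it). Markov's inequality and the Borel--Cantelli lemma then give $\limsup_{d\to\infty}\|H_{d,n}\|_{S_\infty}\le(1+\varepsilon)\alpha(1+\sqrt y)^2$ a.s.\ for every $\varepsilon>0$, and letting $\varepsilon\downarrow0$ concludes.

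The hard part is the moment estimate, namely checking that the variable weights $B_{ik,d,n}$ and the fixed matrices $A^{(k)}_{d,n}$ inside the trace expansion do not disturb the classical graph enumeration; this is exactly what the deterministic termwise bound $\big|\prod_lA^{(k_l)}_{i_li_{l+1},d,n}\big|\le\alpha^{2p}$ takes care of, by extracting all the geometry from the matrix factors and leaving the combinatorics underlying $\lambda_{\max}(\tfrac1n X_{d,n}X_{d,n}^\ast)\to(1+\sqrt y)^2$. The remaining technical point is the truncation under only a fourth-moment assumption, which the diagonal representation of $H_{d,n}$ and the bound $\|\diag(u)A^{(k)}_{d,n}\diag(u)\|_{S_\infty}\le\alpha\|u\|_2^2$ reduce to the scalar estimates of \cite{Bai1993}.
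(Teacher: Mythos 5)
Your proposal is correct and follows essentially the same route as the paper's proof in Appendix B: truncate the entries of $X_{d,n}$ at level $\delta_{d,n}\sqrt n$ via the Yin--Bai--Krishnaiah truncation lemma, re-centre (bounding the centring correction in spectral norm), pass to $\E\tr(H_{d,n}^{2p})$ with $p=l_{d,n}$ growing faster than $\log n$, expand the trace over index cycles, pull the $A,B$-factors out termwise using the bound $\bigl|\prod_l B_{i_lk_l}B_{i_{l+1}k_l}A_{i_li_{l+1}}\bigr|\le\alpha^{2p}$, and recognise the remaining sum as the classical moment sum for $\tr\bigl((n^{-1}X_{d,n}X_{d,n}^\ast)^{2p}\bigr)$ controlled by $\eta^{2p}$ for any $\eta>(1+\sqrt y)^2$; Markov and Borel--Cantelli finish. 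The only cosmetic difference is that you condition on $\sigma(A_{d,n},B_{d,n})$, while the paper inserts the indicator $\ind_{E_{d,n}}$ of the event $\{\max_{i,j}|A_{ij,d,n}|\max_{i,k}B_{ik,d,n}^2<\alpha\}$ and factors the expectation by independence; these are equivalent implementations of the same idea.
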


\begin{proof}[Proof of Theorem \ref{theorem: extreme}]
By Weyl's inequality, we obtain
\begin{align*}
\lambda_{\max}&\left(\frac{1}{n}W_{d,n}\circ\left(\left(X_{d,n}\circ\varepsilon_{d,n}\right)\left(X_{d,n}\circ\varepsilon_{d,n}\right)^\ast\right)\right)\\
&\hspace{2cm}+\lambda_{\min}\left(\frac{1}{n}\left(\hat W_{d,n}-W_{d,n}\right)\circ\left(\left(X_{d,n}\circ\varepsilon_{d,n}\right)\left(X_{d,n}\circ\varepsilon_{d,n}\right)^\ast\right)\right)\\
&\le \lambda_{\max}\left(\frac{1}{n}\hat W_{d,n}\circ\left(\left(X_{d,n}\circ\varepsilon_{d,n}\right)\left(X_{d,n}\circ\varepsilon_{d,n}\right)^\ast\right)\right)\\
&\le \lambda_{\max}\left(\frac{1}{n}W_{d,n}\circ\left(\left(X_{d,n}\circ\varepsilon_{d,n}\right)\left(X_{d,n}\circ\varepsilon_{d,n}\right)^\ast\right)\right)\\
&\hspace{2cm}+\lambda_{\max}\left(\frac{1}{n}\left(\hat W_{d,n}-W_{d,n}\right)\circ\left(\left(X_{d,n}\circ\varepsilon_{d,n}\right)\left(X_{d,n}\circ\varepsilon_{d,n}\right)^\ast\right)\right).
\end{align*}
and
\begin{align*}
\lambda_{\min}&\left(\frac{1}{n}W_{d,n}\circ\left(\left(X_{d,n}\circ\varepsilon_{d,n}\right)\left(X_{d,n}\circ\varepsilon_{d,n}\right)^\ast\right)\right)\\
&\hspace{2cm}+\lambda_{\min}\left(\frac{1}{n}\left(\hat W_{d,n}-W_{d,n}\right)\circ\left(\left(X_{d,n}\circ\varepsilon_{d,n}\right)\left(X_{d,n}\circ\varepsilon_{d,n}\right)^\ast\right)\right)\\
&\le \lambda_{\min}\left(\frac{1}{n}\hat W_{d,n}\circ\left(\left(X_{d,n}\circ\varepsilon_{d,n}\right)\left(X_{d,n}\circ\varepsilon_{d,n}\right)^\ast\right)\right)\\
&\le \lambda_{\min}\left(\frac{1}{n}W_{d,n}\circ\left(\left(X_{d,n}\circ\varepsilon_{d,n}\right)\left(X_{d,n}\circ\varepsilon_{d,n}\right)^\ast\right)\right)\\
&\hspace{2cm}+\lambda_{\max}\left(\frac{1}{n}\left(\hat W_{d,n}-W_{d,n}\right)\circ\left(\left(X_{d,n}\circ\varepsilon_{d,n}\right)\left(X_{d,n}\circ\varepsilon_{d,n}\right)^\ast\right)\right).
\end{align*}
Because of
\begin{align*}
&\lambda_{\max}\left(\frac{1}{n}W_{d,n}\circ\left(\left(X_{d,n}\circ\varepsilon_{d,n}\right)\left(X_{d,n}\circ\varepsilon_{d,n}\right)^\ast\right)\right)\\
&\hspace{0.25cm}=\lambda_{\max}\bigg(\frac{1}{n}\left(w_{d,n}w_{d,n}^\ast\right)\circ\left(\left(X_{d,n}\circ\varepsilon_{d,n}\right)\left(X_{d,n}\circ\varepsilon_{d,n}\right)\right)-\frac{1-p_0}{p_0}\sigma^2I_{d\times d}\\
&\hspace{0.5cm}+\diag\left[\frac{1}{n}\left(W_{d,n}-w_{d,n}w_{d,n}^\ast\right)\circ\left(\left(X_{d,n}\circ\varepsilon_{d,n}\right)\left(X_{d,n}\circ\varepsilon_{d,n}\right)^\ast\right)\right]+\frac{1-p_0}{p_0}\sigma^2I_{d\times d}\bigg),
%&\hspace{0.25cm}\overset{\text{a.s.}}{\longrightarrow} \frac{\sigma^2}{p_0}\left(1+\sqrt{y}\right)^2-\frac{1-p_0}{p_0}\sigma^2,
\end{align*}
and 
\begin{align*}
&\left\Arrowvert \diag\left[\frac{1}{n}\left(W_{d,n}-w_{d,n}w_{d,n}^\ast\right)\circ\left(\left(X_{d,n}\circ\varepsilon_{d,n}\right)\left(X_{d,n}\circ\varepsilon_{d,n}\right)^\ast\right)\right]+\frac{1-p_0}{p_0}\sigma^2I_{d\times d} \right\Arrowvert_{S_\infty}\\
&\hspace{2cm}\longrightarrow 0 \ \ \text{a.s. as $d\to\infty$}
\end{align*}
by the Marcinkiewicz-Zygmund strong law of large numbers (cf. Lemma B.25 in \cite{Bai2010}),
we obtain again by Weyl's inequality and Theorem 1 of \cite{Bai1993}
$$
\lambda_{\max}\left(\frac{1}{n}W_{d,n}\circ\left(\left(X_{d,n}\circ\varepsilon_{d,n}\right)\left(X_{d,n}\circ\varepsilon_{d,n}\right)^\ast\right)\right)\overset{\text{a.s.}}{\longrightarrow} \frac{\sigma^2}{p_0}\left(1+\sqrt{y}\right)^2-\frac{1-p_0}{p_0}\sigma^2.
$$
With same argument,
\begin{align*}
\lambda_{\min}\left(\frac{1}{n}W_{d,n}\circ\left(\left(X_{d,n}\circ\varepsilon_{d,n}\right)\left(X_{d,n}\circ\varepsilon_{d,n}\right)^\ast\right)\right)\overset{\text{a.s.}}{\longrightarrow}  \frac{\sigma^2}{p_0}\left(1-\sqrt{y}\right)^2-\frac{1-p_0}{p_0}\sigma^2.
\end{align*}
In order to finish the proof, it suffices to show that
\begin{align*}
\left\Arrowvert\frac{1}{n}\left(\hat W_{d,n}-W_{d,n}\right)\circ\left(\left(X_{d,n}\circ\varepsilon_{d,n}\right)\left(X_{d,n}\circ\varepsilon_{d,n}\right)^\ast\right)\right\Arrowvert_{S_{\infty}}\overset{\text{a.s.}}{\longrightarrow} 0.
\end{align*}
But this is an easy consequence of Proposition \ref{proposition: sigma} since by \eqref{eq: p},
\begin{align*}
\limsup_{n\to\infty} \max_{i,j} \left|\hat W_{ij,d,n}-W_{ij,d,n}\right| \overset{\text{a.s.}}{\longrightarrow} 0.
\end{align*}
\end{proof}

%\begin{center}\chapter{\Large Appendix}
%\end{center}
\appendix
%\bigskip
%The appendix is structured as follows. In Section I we prove Proposition \ref{proposition: prop}. Section II is devoted to the proof of Proposition \ref{proposition: sigma}, and Section III contains additional auxiliary results which are used throughout the proofs.
\smallskip
\section{Proof of Proposition 4.1}\label{A}
\smallskip
\subsection{Step I: Modifying $\varepsilon_{d,n}$}\label{subsec: modify}
By tightness of $(\mu^{w_{d,n}})$ we have for any $\delta>0$ a constant $p_0>0$ such that for sufficiently large $d\in\N$ 
\[
\#\{p_{i,d,n}<p_0\}\le d\delta.
\]
We replace the matrix $\varepsilon_{d,n}$ by $\tilde \varepsilon_{d,n}$, where $\tilde\varepsilon_{ik,d,n}=\varepsilon_{ik,d,n}$ if $p_i\ge p_0$ and otherwise $\tilde\varepsilon_{ik,d,n}$ is a Bernoulli random variable with $\P(\tilde\varepsilon_{ik}=1)=p_0$ such that the entries of $\tilde\varepsilon_{d,n}$ are independent and jointly independent of $Y_{1,d,n},...,Y_{n,d,n}$. $\tilde T_{d,n}$ be the matrix as $\hat T_{d,n}$ but relying on the missingness matrix $\tilde \varepsilon_{d,n}$ in place of $\varepsilon_{d,n}$. Since by Theorem \ref{theorem: A43}
\[
d_K\left(\mu^{\tilde T_{d,n}},\mu^{\hat T_{d,n}}\right)\le \frac{1}{d}\rank(\tilde T_{d,n}-\hat T_{d,n})\le \delta,
\]
we may assume subsequently $p_{i,d,n}\ge p_0$.
%In the next subsection we refer to this $\tilde T_{d,n}$ as $\hat T_{d,n}$ and to $\tilde\varepsilon_{d,n}$ as $\varepsilon_{d,n}$.
\subsection{Step II: Removing $\frac{1}{n}\hat W_{d,n}\circ \left((\hat M_{d,n}\circ\varepsilon_{d,n})(\hat M_{d,n}\circ\varepsilon_{d,n})^\ast\right)$}\label{subsec: m} Let 
$$\tilde T_{d,n}=\hat T_{d,n}-\frac{1}{n}\hat W_{d,n}\circ \left((\hat M_{d,n}\circ\varepsilon_{d,n})(\hat M_{d,n}\circ\varepsilon_{d,n})^\ast\right).$$ 
First note that
$$\P\left(\min_{i,j}\#\NN_{ij}=0\right)\le d^2 \max_{i,j} P\left(\#\NN_{ij}=0\right)\le d^2 (1-p_0^2)^n.$$
Hence, by the Borel-Cantelli lemma we have almost surely for all but finitely many indices $d$
$$\frac{1}{n}\hat W_{d,n}\circ \left((\hat M_{d,n}\circ\varepsilon_{d,n})(\hat M_{d,n}\circ\varepsilon_{d,n})^\ast\right)=\hat m_{d,n}\hat m_{d,n}^\ast.$$
Now, by Theorem \ref{theorem: A43} we have
\[
\limsup_{d\to\infty}d_K\left( \mu^{\hat T_{d,n}},\mu^{\tilde T_{d,n}}\right)=0 \ \ \text{a.s.}
\]
Therefore it is sufficient to prove $d_L(\mu^{\tilde T_{d,n}},\mu^{\bar T_{d,n}})\to 0$. In the next subsection, we refer to $\tilde T_{d,n}$ as $\hat T_{d,n}$.

\subsection{Step III: Truncation of $T_{d,n}$}\label{subsec: trunc T} By the tightness of the sequence $(\mu^{T_{d,n}})$ we have for any $\delta>0$ a constant $\tau_0>0$ such that for sufficiently large $d\in\N$
\[
\#\{T_{kk,d,n}>\tau_0\}\le d\delta. 
\]
Therefore, let $\check T_{d,n}=\diag(\ind\{T_{11,d,n}\le \tau_0\}T_{11,d,n},...,\ind\{T_{dd,d,n}\le \tau_0\}T_{dd,d,n})$ and $\tilde T_{d,n}$ be the sample covariance matrix with missing observations built from the random variables 
$$\tilde Y_{i,d,n}=\check T_{d,n}X_{i,d,n}, \ \ i=1,\dots,n,$$ while $\varepsilon_{d,n}$ remains the same. Since again by Theorem \ref{theorem: A43}  $$d_K\left( \mu^{\check T_{d,n}},\mu^{\hat T_{d,n}}\right)\le \frac{1}{d}\rank\left(\check T_{d,n}-\hat T_{d,n}\right)\le \delta,$$
it is sufficient to assume subsequently that the spectral measures of the sequence $(T_{d,n})$ have uniformly bounded support.

\subsection{Step IV: Truncation of $X_{d,n}$} \label{subsec: trunc X 1} For $0<\delta< \frac{1}{2}$ we truncate the variables $X_{ik,d,n}$ at the threshold level $n^{1/2}d^{\alpha-1/2}$, $\alpha>\frac{1+\delta}{2}$. Hence, let 
\begin{equation}
\tilde X_{ik,d,n}=X_{ik,d,n}\ind(|X_{ik,d,n}|\le n^{1/2}d^{\alpha-1/2})\label{eq: tilde1}
\end{equation}
 and  $\tilde T_{d,n},~\tilde Y_{d,n},\text{ and }\tilde M_{d,n}$ be the matrices constructed by replacing $X_{d,n}$ with $\tilde X_{d,n}=(\tilde X_{ik,d,n})$ in $\hat T_{d,n},~Y_{d,n},\text{ and }\hat M_{d,n}$. We have
\begin{align}
&\hspace{-1cm}d_K\left( \mu^{\tilde T_{d,n}},\mu^{\hat T_{d,n}}\right) \notag\\
&\le \frac{1}{d}\rank(\tilde T_{d,n}-\hat T_{d,n}) \notag \\
&=\frac{1}{d}\rank\bigg[\frac{1}{n}\hat W_{d,n}\circ\Big((Y_{d,n}\circ \varepsilon_{d,n})(Y_{d,n}\circ \varepsilon_{d,n})^\ast-(\tilde Y_{d,n}\circ \varepsilon_{d,n})(\tilde Y_{d,n}\circ \varepsilon_{d,n})^\ast\notag \\
&\hspace{2cm}-(\hat M_{d,n}\circ \varepsilon_{d,n})( Y_{d,n}\circ \varepsilon_{d,n})^\ast+(\tilde M_{d,n}\circ \varepsilon_{d,n})(\tilde Y_{d,n}\circ \varepsilon_{d,n})^\ast\notag \\
&\hspace{2cm}-(Y_{d,n}\circ \varepsilon_{d,n})(\hat M_{d,n}\circ \varepsilon_{d,n})^\ast+(\tilde Y_{d,n}\circ \varepsilon_{d,n})(\tilde M_{d,n}\circ \varepsilon_{d,n})^\ast\Big)\bigg] \notag\\
&=\frac{1}{d}\rank\bigg[\frac{1}{n}\hat W_{d,n}\circ\Big(((Y_{d,n}-\tilde Y_{d,n})\circ \varepsilon_{d,n})(Y_{d,n}\circ \varepsilon_{d,n})^\ast\notag \\
&\hspace{2cm}+(\tilde Y_{d,n}\circ \varepsilon_{d,n})((Y_{d,n}-\tilde Y_{d,n})\circ \varepsilon_{d,n})^\ast\notag\\
&\hspace{2cm}-(\hat M_{d,n}\circ \varepsilon_{d,n})( (Y_{d,n}-\tilde Y_{d,n})\circ \varepsilon_{d,n})^\ast \notag\\
&\hspace{2cm}-((\hat M_{d,n}-\tilde M_{d,n})\circ \varepsilon_{d,n})(\tilde Y_{d,n}\circ \varepsilon_{d,n})^\ast\notag\\
&\hspace{2cm}-( (Y_{d,n}-\tilde Y_{d,n})\circ \varepsilon_{d,n})(\hat M_{d,n}\circ \varepsilon_{d,n})^\ast\notag\\
&\hspace{2cm}-(\tilde Y_{d,n}\circ \varepsilon_{d,n})((\hat M_{d,n}-\tilde M_{d,n})\circ \varepsilon_{d,n})^\ast\Big)\bigg] \notag\\
%&\le\frac{1}{d}\rank\Big(\hat C\circ\big(((Y-\tilde Y)\circ \varepsilon)((Y-\hat M)\circ \varepsilon)'-((\hat M-\tilde M)\circ \varepsilon)(\tilde Y\circ \varepsilon)'\big)\Big)\\
%&\hspace{0.2cm}+\frac{1}{d}\rank\Big(\hat C\circ\big(((\tilde Y-\hat M)\circ \varepsilon)((\tilde Y-Y)\circ \varepsilon)'-(\tilde Y\circ \varepsilon)((\hat M-\tilde M)\circ \varepsilon)'\big)\Big)\\
&\le \frac{1}{d}\rank\bigg[\frac{1}{n}\hat W_{d,n}\circ\big(((Y_{d,n}-\tilde Y_{d,n})\circ \varepsilon_{d,n})((Y_{d,n}-\hat M_{d,n})\circ \varepsilon_{d,n})^\ast\notag\\
&\hspace{2cm}-((\hat M_{d,n}-\tilde M_{d,n})\circ \varepsilon_{d,n})(\tilde Y_{d,n}\circ \varepsilon_{d,n})^\ast\big)\bigg]\notag\\
&\hspace{0.5cm}+\frac{1}{d}\rank\bigg[\frac{1}{n}\hat W_{d,n}\circ\big(((\tilde Y_{d,n}-\hat M_{d,n})\circ \varepsilon_{d,n})((Y_{d,n}-\tilde Y_{d,n})\circ \varepsilon_{d,n})^\ast\notag\\
&\hspace{2.5cm}-(\tilde Y_{d,n}\circ \varepsilon_{d,n})((\hat M_{d,n}-\tilde M_{d,n})\circ \varepsilon_{d,n})^\ast\big)\bigg]\notag\\
&\le \frac{2}{d} \#\left\{i\in\{1,\dots,d\}:\sum_{k=1}^n\ind(|X_{ik,d,n}|>n^{1/2}d^{\alpha-1/2})>0\right\} \label{inequality: null vector}\\
&\le \frac{2}{d}\sum_{i,k}\ind(|X_{ik,d,n}|>n^{1/2}d^{\alpha-1/2}) \label{inequality: bound number null vectors},
\end{align}
where inequality (\ref{inequality: null vector}) follows by the simple observation that the $i$-th row respectively the $i$-th column of the matrices 
$$\left((Y_{d,n}-\tilde Y_{d,n})\circ \varepsilon_{d,n}\right)\left((Y_{d,n}-\hat M_{d,n})\circ \varepsilon_{d,n}\right)^\ast$$ 
and
$$\left((\hat M_{d,n}-\tilde M_{d,n})\circ \varepsilon_{d,n}\right)(\tilde Y_{d,n}\circ \varepsilon_{d,n})^\ast$$
respectively
$$\left((\tilde Y_{d,n}-\hat M_{d,n})\circ \varepsilon_{d,n}\right)\left((Y_{d,n}-\tilde Y_{d,n})\circ \varepsilon_{d,n}\right)^\ast$$
and
$$(\tilde Y_{d,n}\circ \varepsilon_{d,n})\left((\hat M_{d,n}-\tilde M_{d,n})\circ \varepsilon_{d,n}\right)^\ast$$ 
is the null vector if 
$$\sum_{k=1}^n\ind(|X_{ik,d,n}|>n^{1/2}d^{\alpha-1/2})=0.$$
Next we prove that 
$$\frac{2}{d}\sum_{i,k}\ind(|X_{ik,d,n}|>n^{1/2}d^{\alpha-1/2})\overset{\text{a.s.}}{\longrightarrow}0$$
as $d\rightarrow \infty$. Note first that by Markov's inequality
\begin{align}
\Var\left(\ind\{|X_{11,d,n}|>n^{1/2}d^{\alpha-1/2}\}\right)&\le \E\ind(|X_{ik,d,n}|>n^{1/2}d^{\alpha-1/2}\}\notag\\
&\le n^{-1}d^{1-2\alpha}.\label{eq: markov}
\end{align}
 Using \eqref{eq: markov} in \eqref{eq: 3.4}, and  \eqref{eq: markov} in Bernstein's inequality in \eqref{eq: 3.7}, we conclude for sufficiently large $d$ and some constant $\beta>0$
\begin{align}\notag
\P&\left(\sum_{k,i}\ind\{|X_{ik,d,n}|>n^{1/2}d^{\alpha-1/2}\}\ge d^{1-\delta}\right)\notag \\
&=\P\Bigg(\sum_{k,i}\Big(\ind\{|X_{ik,d,n}|>n^{1/2}d^{\alpha-1/2}\}-\E\ind(|X_{ik,d,n}|>n^{1/2}d^{\alpha-1/2}\}\Big)\notag\\
&\hspace{4cm}\ge d^{1-\delta}-nd\E \ind\{|X_{11,d,n}|>n^{1/2}d^{\alpha-1/2}\}\Bigg)\notag\\
&\le\P\Bigg(\sum_{k,i}(\ind\{|X_{ik,d,n}|>n^{1/2}d^{\alpha-1/2}\}-\E\ind\{|X_{ik,d,n}|>n^{1/2}d^{\alpha-1/2}\})\notag\\
&\hspace{6cm}\ge d^{1-\delta}-d^{2(1-\alpha)}\Bigg)\label{eq: 3.4}\\
&\le\P\Bigg(\sum_{k,i}(\ind\{|X_{ik,d,n}|>n^{1/2}d^{\alpha-1/2}\}-\E\ind\{|X_{ik,d,n}|>n^{1/2}d^{\alpha-1/2}\})\notag\\
&\hspace{6cm}\ge \frac{1}{2} d^{1-\delta}\Bigg)\label{ine: fir}\\
%&\le \exp\left(-\frac{n^{2(1-\delta)}}{2n^{2(1-\alpha)}+\frac{2}{3}n^{1-\delta}}\right),
&\le\exp\left(-\beta d^{1-\delta}\right) \label{eq: 3.7},
\end{align}
where inequality \eqref{ine: fir} holds since $\alpha>(1+\delta)/2$. So, by inequality (\ref{inequality: bound number null vectors}) follows
$$d_K\left(\mu^{\tilde T_{d,n}},\mu^{\hat T_{d,n}}\right)\overset{\text{a.s.}}{\longrightarrow} 0 \ \ \text{ for } \ \ d\to\infty.$$ Note that $\tilde X_{d,n}$ is not centered and standardized, but by Cauchy-Schwarz inequality and Markov inequality,
\begin{align}
|\E\tilde X_{ik,d,n}|&=|\E X_{ik,d,n}-\E \tilde X_{ik,d,n}|\notag\\
&=|\E X_{ik,d,n}\ind(|X_{ik,d,n}|> n^{1/2}d^{\alpha-1/2})|\notag\\
&\le \sqrt{\P(|X_{ik,d,n}|>n^{1/2}d^{\alpha-1/2})}\notag\\
&\le n^{-1/2}d^{1/2-\alpha}\label{eq: tilde}
\end{align}
and moreover, $\Var(\tilde X_{ik,d,n})\uparrow 1$ as $d\to \infty$. In the subsequent section we redefine the matrix $X_{d,n}$ by $\tilde X_{d,n}$ and keep the initial notations.
\subsection{Step V: Replacing the normalizing matrix $n^{-1}\hat W_{d,n}$} Let 
\begin{align*}
\tilde T_{d,n}&=\frac{1}{n}W_{d,n}\circ \Big((Y_{d,n}\circ\varepsilon_{d,n})(Y_{d,n}\circ\varepsilon_{d,n})^\ast\Big)\\
&\hspace{2cm}-\frac{1}{n}W_{d,n}\circ\left((\hat M_{d,n}\circ\varepsilon_{d,n})(Y_{d,n}\circ\varepsilon_{d,n})^\ast\right)\\
&\hspace{2cm}-\frac{1}{n}W_{d,n}\circ \left((Y_{d,n}\circ\varepsilon_{d,n})(\hat M_{d,n}\circ\varepsilon_{d,n})^\ast\right).
\end{align*}
By Theorem \ref{theorem: inequality Levy distance trace}, the elementary inequality $$\tr\left((C+D)^2\right)\le 2\tr(C^2+D^2)$$
 for symmetric $d\times d$ matrices $C$ and $D$, applied to
 \begin{align*}
 C&=\frac{1}{n}(\hat W_{d,n}-W_{d,n})\circ \Big(\big((Y_{d,n}\circ\varepsilon_{d,n})(Y_{d,n}\circ\varepsilon_{d,n})^\ast \big)\Big),\\
D&=-\frac{1}{n}(\hat W_{d,n}-W_{d,n})\circ \Big(\big((\hat M_{d,n}\circ\varepsilon_{d,n})(Y_{d,n}\circ\varepsilon_{d,n})^\ast\big)+\big((Y_{d,n}\circ\varepsilon_{d,n})(\hat M_{d,n}\circ\varepsilon_{d,n})^\ast\big)\Big),
 \end{align*}
as well as the inequality
$$\tr[\left(A+A^\ast\right)^2]\le 4\tr(AA^\ast)$$
 for any matrix $A$ with real entries, we deduce 
\begin{align}
\notag d_L^3\Big(&\mu^{\tilde T_{d,n}},\mu^{\hat T_{d,n}}\Big)\\
&\le \frac{1}{d}\tr \Bigg[\Bigg(\frac{1}{n}(\hat W_{d,n}-W_{d,n})\circ \Big(\big((Y_{d,n}\circ\varepsilon_{d,n})(Y_{d,n}\circ\varepsilon_{d,n})^\ast \big)\notag\\
&\hspace{1.1cm}-\big((\hat M_{d,n}\circ\varepsilon_{d,n})(Y_{d,n}\circ\varepsilon_{d,n})^\ast\big)-\big((Y_{d,n}\circ\varepsilon_{d,n})(\hat M_{d,n}\circ\varepsilon_{d,n})^\ast\big)\Big)\Bigg)^2\Bigg]\notag\\
&\le \frac{2}{d}\tr\Bigg[\left(\frac{1}{n}(\hat W_{d,n}-W_{d,n})\circ \Big(\big((Y_{d,n}\circ\varepsilon_{d,n})(Y_{d,n}\circ\varepsilon_{d,n})^\ast \big)\Big)\right)^2\Bigg]\notag\\
&\hspace{1.1cm}+\frac{8}{d}\tr\Bigg[\frac{1}{n^2}(\hat W_{d,n}-W_{d,n})^2\circ\Big(\big((\hat M_{d,n}\circ\varepsilon_{d,n})(Y_{d,n}\circ\varepsilon_{d,n})^\ast\big)\notag\\&\hspace{6cm}\times\big((Y_{d,n}\circ\varepsilon_{d,n})(\hat M_{d,n}\circ\varepsilon_{d,n})^\ast\big)\Big)\Bigg]\notag\\
\label{ine:detwei}&=:h_{d,n}.
\end{align}
We prove that $h_{d,n}\to 0$ a.s. as $d\to\infty$. %For some sufficiently large constant define the event 
%$$A=\left\{\forall 1\le i,j\le d: \left(\hat C_{ij}-C_{ij}\right)^2\le \frac{c}{n^3}\right\}.$$
%Then we have
%\begin{align*}
%\P (A^C)&\le d^2 \max_{i\neq j}\P\left((\hat C_{ij}-C_{ij})^2>\frac{c}{n^3}\right)+d\max_i\P\left((\hat C_{ij}-C_{ij})^2>\frac{c}{n^3}\right)\\
%&\le d^2 \max_{i\neq j}\P\left(\left(\frac{p_ip_j n-\sum_k\varepsilon_{ki}\varepsilon_{kj}}{p_ip_j n\sum_k\varepsilon_{ki}\varepsilon_{kj}}\right)^2> \frac{c}{n^3}\right)+d\max_{i}\P\left(\left(\frac{p_i n-\sum_k\varepsilon_{ki}}{p_i n\sum_k\varepsilon_{ki}}\right)^2>\frac{c}{n^3}\right)\\
%&
%\end{align*}
Thereto, define for an arbitrary constant 
\begin{equation}
\gamma>\sqrt{4\alpha+7} \label{eq: gamma1}
\end{equation}
the event %\marginpar{log n statt log d}
\begin{equation}
A_{d,n}=\left\{\forall  \ 1\le i,j\le d: \left|(\hat W_{ij,d,n})^{-1}-(W_{ij,d,n})^{-1}\right|\le \gamma\sqrt{\frac{\log n}{n}}\right\}. \label{eq: def A}
\end{equation}
%Let $n$ be sufficiently such that 
%$$n\min p_ip_j(1-p_ip_j)>\frac{c}{3}\sqrt{n\log d}$$ 
Then, for sufficiently large $d$ the union bound and Hoeffding's inequality yield
\begin{align*}
\P (A_{d,n})&=1-\P(A_{d,n}^c)\\ 
&\ge 1- d^2 \max_{i, j}\P\left(\left|(\hat W_{ij,d,n})^{-1}-(W_{ij,d,n})^{-1}\right|>\gamma\sqrt{\frac{\log n}{n}}\right)\\
%&\hspace{4cm}-d\max_i\P\left(\left|(\hat C_{ii})^{-1}-(C_{ii})^{-1}\right|>\gamma\sqrt{n\log d}\right)\\
&\ge 1-2d^2\exp\left(-\frac{\gamma^2\log n}{2}\right)\\%-2d\exp\left(-\frac{\gamma^2\log d}{2}\right)\\
&= 1-2d^{2}n^{-2\gamma^2}.
\end{align*}
By the Borel-Cantelli lemma all but finitely many events $A_{d,n}$ almost surely occur. Hence, if $\ind_{A_{d,n}}h_{d,n}\to 0$ a.s. for $d\to \infty$ then $h_{d,n}\to 0$ a.s.
Note furthermore that on the event $A_{d,n}$,
\begin{align}
\left|\hat W_{ij,d,n}-W_{ij,d,n}\right|&=\left|\frac{1}{(\hat W_{ij,d,n})^{-1}}-\frac{1}{(W_{ij,d,n})^{-1}}\right| \notag \\
&=\frac{\left|(\hat W_{ij,d,n})^{-1}-W_{ij,d,n}^{-1}\right|}{\left|(\hat W_{ij,d,n})^{-1}(W_{ij,d,n})^{-1}\right|}\notag \\
&\leq \frac{\gamma \sqrt{(\log n)/n}}{\min_i p_{i,d,n}^2\left((W_{ij,d,n})^{-1}-\left|(\hat W_{ij,d,n})^{-1}-(W_{ij,d,n})^{-1}\right|\right)}\notag \\
&\le \frac{\gamma\sqrt{(\log n)/n}}{\min_i p_{i,d,n}^2\left(\min_ip_{i,d,n}^2-\gamma \sqrt{(\log n)/n}\right)} \notag \\
&\le \frac{2\gamma}{\min_i p_{i,d,n}^4}\sqrt{\frac{\log n}{n}}\label{eq: p}
\end{align}
for $d$ sufficiently large. 
Now we prove that
$\E \ind_{A_{d,n}} h_{d,n}\to 0$. In order to save space the explicit dependence on  $d$ and $n$ is suppressed in the displays until the end of the section.
 By inequality \eqref{eq: p}, we have
\begin{align*}
\E h\ind_A&\le \frac{8\gamma^2\log n}{\min_ip_i^8dn^3}\sum_{i,j=1}^d\E\left(\left(\sum_{k=1}^n Y_{ik}Y_{jk}\varepsilon_{ik}\varepsilon_{jk}\right)^2+4\left(\sum_{k=1}^n\hat M_{ik}Y_{jk}\varepsilon_{ik}\varepsilon_{jk}\right)^2\right)\ind_A\\
&\le \frac{8\gamma^2\log n}{\min_ip_i^8dn^3}\Bigg(\sum_{i,j=1}^d\sum_{k,l=1}^n|\E Y_{ik}Y_{jk}Y_{il}Y_{jl}|\\
&\hspace{3cm}+4\sum_{k,l=1}^n\E\hat M_{ik}Y_{jk}\hat M_{il}Y_{jl}\varepsilon_{ik}\varepsilon_{jk}\varepsilon_{il}\varepsilon_{jl}\ind_A\Bigg)\\
&=I_1+I_2,
\end{align*}
where
\begin{align*}
I_1=\frac{8\gamma^2\log n}{\min_ip_i^8dn^3}\sum_{i,j=1}^d\sum_{k,l=1}^n|\E Y_{ik}Y_{jk}Y_{il}Y_{jl}|
\end{align*}
and
\begin{align*}
I_2=\frac{32\gamma^2\log n}{\min_ip_i^8dn^3}\sum_{i,j=1}^d\sum_{k,l=1}^n\E\hat M_{ik}Y_{jk}\hat M_{il}Y_{jl}\varepsilon_{ik}\varepsilon_{jk}\varepsilon_{il}\varepsilon_{jl}\ind_A.
\end{align*}
For the first term we obtain by \eqref{eq: tilde}, \eqref{eq: asymptotic}, uniform boundedness of the entries of $T_{d,n}$, and  \eqref{eq: tilde1}
\begin{align*}
I_1&=\frac{8\gamma^2\log n}{\min_ip_i^8dn^3}\Bigg(\sum_{\substack{i,j=1\\ i\neq j}}^d\sum_{\substack{k,l=1\\ k\neq l}}^n|\E Y_{ik}Y_{jk}Y_{il}Y_{jl}|+\sum_{i=1}^d\sum_{\substack{k,l=1\\ k\neq l}}^n|\E Y_{ik}^2Y_{il}^2|\\
&\hspace{2cm}+\sum_{\substack{i,j=1\\ i\neq j}}^d\sum_{k=1}^n|\E Y_{ik}^2Y_{jk}^2|+\sum_{i=1}^d\sum_{k=1}^n\E Y_{ik}^4\Bigg)\\
%&\lesssim d^{-1}\frac{\log d}{n^3}\frac{d^2n^2}{n^{4\alpha}}+d^{-1}\frac{\log d}{n^3}dn^2+d^{-1}\frac{\log d}{n^3}d^2 n+d^{-1}\frac{\log d}{n^3}dnn^{2\alpha}\\
&\lesssim\frac{\log n}{nd^{4\alpha-1}}+\frac{\log n}{n}+\frac{d\log n}{n^2}+\frac{\log n}{n d^{1-2\alpha}}\\
&\lesssim \frac{\log n}{nd^{1-2\alpha}}\longrightarrow 0.
\end{align*}
Recall the definition \eqref{eq: M} of $\hat M_{d,n}$. Using again the bound 
\begin{equation}
|\hat W_{ii}|\le \frac{1}{(W_{ii})^{-1}-|(\hat W_{ii})^{-1}-(W_{ii})^{-1}|}\le \frac{2}{\min_i p_i^2}\ \ \ \text{on the event} \  A\label{eq: C on A}
\end{equation}
for $d$ sufficiently large, we get for the second term with the same type of arguments
\begin{align*}
I_2&=\frac{24\gamma^2\log n}{\min_ip_i^8dn^3}\sum_{i,j=1}^d\sum_{k_1,k_2,k_3,k_4=1}^n\hspace{-1mm}\E\frac{1}{n^2}\hat W_{ii}^2 Y_{jk_1}Y_{jk_2}Y_{ik_3} Y_{ik_4}\varepsilon_{ik_1}\varepsilon_{jk_1}\varepsilon_{ik_2}\varepsilon_{jk_2}\varepsilon_{ik_3}\varepsilon_{ik_4}\ind_A\\
&\le   \frac{96\gamma^2\log n}{\min_ip_i^{12}dn^5}\sum_{i,j=1}^d\sum_{k_1,k_2,k_3k_4=1}^n\left| \E Y_{jk_1} Y_{jk_2} Y_{ik_3}  Y_{ik_4} \right|\\
&\lesssim \frac{\log n}{dn^5}\Bigg[\sum_{i=1}^d\Bigg(\sum_{\substack{k_1,k_2,k_3,k_4=1\\ k_1\neq k_2\neq k_3\neq k_4}}^n+\sum_{\substack{k_1,k_2,k_3,k_4=1\\ \neg (k_1\neq k_2\neq k_3\neq k_4)}}^n\Bigg)\left| \E Y_{ik_1} Y_{ik_2} Y_{ik_3}Y_{ik_4} \right|\\
&\hspace{2cm}+ \sum_{\substack{i,j=1\\ i\neq j}}^d\Bigg(\sum_{\substack{k_1,k_2,k_3,k_4=1\\ k_1\neq k_2\neq k_3\neq k_4}}^n+\sum_{\substack{k_1,k_2,k_3,k_4=1\\ \neg(k_1\neq k_2\neq k_3\neq k_4)}}^n\Bigg)\left| \E Y_{jk_1} Y_{jk_2} Y_{ik_3}  Y_{ik_4} \right|\Bigg]\\
&\lesssim \frac{\log n}{dn^5}\left(d^{3-4\alpha}n^2+d^{2\alpha}n^{4}+d^{4-4\alpha}n^2+d^2n^{3}\right)\\
&\lesssim \frac{d^{2\alpha-1}\log n}{n}\longrightarrow 0.
\end{align*}
We need a sufficiently tight bound on the variance of $h_{d,n}\ind_{A_{d,n}}$ in order to conclude by the Borel-Cantelli lemma  that in addition $h_{d,n}\ind_{A_{d,n}}\to 0$ almost surely. Thereto, define 
 $$\hat G_{ij,d,n}=\frac{1}{n}\left(\hat W_{ij,d,n}-W_{ij,d,n}\right),\ \ i,j=1,...,d.$$ 
 Using \eqref{eq: p} in \eqref{equation: first term} and dropping those summands of \eqref{eq: drop} whose indices satisfy $\{i_1,j_1\}\cap\{i_2,j_2\}\neq\emptyset$, we get
\begin{align}
&\Var h\ind_A\notag \\
%&=\frac{1}{d^2}\E\left\{\sum_{i,j=1}^d \hat G_{ij}^2\left[\left(\sum_{k\in\NN_{ij}} Y_{ik}Y_{jk}\right)^2+\left(\sum_{k\in\NN_{ij}}^n\hat M_{ik}Y_{jk}\right)^2\right] \right\}^2\ind_A\notag \\
%&\hspace{0.5cm}-\frac{1}{d^2}\left\{\E\sum_{i,j=1}^d \hat G_{ij}^2\left[\left(\sum_{k\in\NN_{ij}}^n Y_{ik}Y_{jk}\right)^2+\left(\sum_{k\in\NN_{ij}}^n\hat M_{ik}Y_{jk}\right)^2\right]\ind_A \right\}^2\notag \\
&=\frac{1}{d^2}\sum_{i_1,i_2,j_1,j_2=1}^d\E\Bigg\{\hat G_{i_1j_1}^2\Bigg(2\Bigg(\sum_{k\in\NN_{i_1j_1}}Y_{i_1k}Y_{j_1k}\Bigg)^2+8\Bigg(\sum_{k\in\NN_{i_1j_1}}\hat M_{i_1k}Y_{j_1k}\Bigg)^2\Bigg)\notag \\
&\hspace{3cm}\times \hat G_{i_2j_2}^2\Bigg(2\Bigg(\sum_{k\in\NN_{i_2j_2}} Y_{i_2k}Y_{j_2k}\Bigg)^2+8\Bigg(\sum_{k\in\NN_{i_2j_2}}\hat M_{i_2k}Y_{j_2k}\Bigg)^2\Bigg)\ind_A\Bigg\} \notag \\
&\hspace{0.5cm}-\frac{1}{d^2}\sum_{i_1,i_2,j_1,j_2=1}^d\E \Bigg\{\hat G_{i_1j_1}^2\Bigg(2\Bigg(\sum_{k\in\NN_{i_1j_1}} Y_{i_1k}Y_{j_1k}\Bigg)^2+8\Bigg(\sum_{k\in\NN_{i_1j_1}}\hat M_{i_1k}Y_{j_1k}\Bigg)^2\Bigg)\ind_A\Bigg\} \label{eq: drop}\\
&\hspace{3cm}\times\E \Bigg\{\hat G_{i_2j_2}^2\Bigg(2\Bigg(\sum_{k\in\NN_{i_2j_2}} Y_{i_2k}Y_{j_2k}\Bigg)^2+8\Bigg(\sum_{k\in\NN_{i_2j_2}}\hat M_{i_2k}Y_{j_2k}\Bigg)^2\Bigg)\ind_A\Bigg\} \notag \\
&\le \frac{2^{10}\gamma^4(\log n)^2}{\min p_i^{16} d^2n^6}\hspace{-2mm}\sum_{\substack{i_1,i_2,j_1,j_2=1\\ \{i_1,j_1\}\cap\{i_2,j_2\}\neq\emptyset}}^d\hspace{-2mm}\E\Bigg\{\Bigg(\Bigg(\sum_{k\in\NN_{i_1j_1}} Y_{i_1k}Y_{j_1k}\Bigg)^2+\Bigg(\sum_{k\in\NN_{i_1j_1}}\hat M_{i_1k}Y_{j_1k}\Bigg)^2\ind_A\Bigg) \notag \\
&\hspace{4.2cm}\times\Bigg(\Bigg(\sum_{k\in\NN_{i_2j_2}}^n Y_{i_2k}Y_{j_2k}\Bigg)^2+\Bigg(\sum_{k\in\NN_{i_2j_2}}\hat M_{i_2k}Y_{j_2k}\Bigg)^2\ind_A\Bigg)\Bigg\} \label{equation: first term}\\
&\hspace{0.5cm}+\frac{1}{d^2}\hspace{-2mm}\sum_{\substack{i_1,i_2,j_1,j_2=1\\ \{i_1,j_1\}\cap \{i_2,j_2\}=\emptyset}}^d\hspace{-2mm}\E\Bigg\{\hat G_{i_1j_1}^2\Bigg(\Bigg(\sum_{k\in\NN_{i_1j_1}} Y_{i_1k}Y_{j_1k}\Bigg)^2+\Bigg(\sum_{k\in\NN_{i_1j_1}}\hat M_{i_1k}Y_{j_1k}\Bigg)^2\Bigg)\notag \\
&\hspace{3.5cm}\times\hat G_{i_2j_2}^2\Bigg(\Bigg(\sum_{k\in\NN_{i_2j_2}} Y_{i_2k}Y_{j_2k}\Bigg)^2+\Bigg(\sum_{k\in\NN_{i_2j_2}}\hat M_{i_2k}Y_{j_2k}\Bigg)^2\Bigg)\ind_A\Bigg\}\label{equation: first second term}\\
&\hspace{0.5cm}-\frac{1}{d^2}\hspace{-2mm}\sum_{\substack{i_1,i_2,j_1,j_2=1\\ \{i_1,j_1\}\cap\{i_2,j_2\}=\emptyset }}^d\hspace{-2mm}\E\Bigg\{\hat G_{i_1j_1}^2\Bigg(\Bigg(\sum_{k\in\NN_{i_1j_1}} Y_{i_1k}Y_{j_1k}\Bigg)^2+\Bigg(\sum_{k\in\NN_{i_1j_1}}\hat M_{i_1k}Y_{j_1k}\Bigg)^2\Bigg)\ind_A \Bigg\}\notag \\
&\hspace{2.8cm}\times\E\Bigg\{\hat G_{i_2j_2}^2\Bigg(\Bigg(\sum_{k=\in\NN_{i_2j_2}} Y_{i_2k}Y_{j_2k}\Bigg)^2+\Bigg(\sum_{k\in\NN_{i_2j_2}}\hat M_{i_2k}Y_{j_2k}\Bigg)^2\Bigg)\ind_A\Bigg\} \label{equation: second second term} \\
&=I_1+I_2\notag,
\end{align}
where $I_1$ consists of the term (\ref{equation: first term}) and $I_2$ of (\ref{equation: first second term}) and (\ref{equation: second second term}).
The term $I_1$ yields
\begin{align*}
I_1%&=\frac{(\log d)^2}{d^2n^6}\sum_{\substack{i_1,i_2,j_1,j_2=1\\ \{i_1,j_1\}\cap\{i_2,j_2\}\neq\emptyset}}^d\E\Bigg\{\Bigg(\Bigg(\sum_{k\in\NN_{i_1j_1}} Y_{i_1k}Y_{j_1k}\Bigg)^2+\Bigg(\sum_{k\in\NN_{i_1j_1}}\hat M_{i_1k}Y_{j_1k}\Bigg)^2\ind_A\Bigg)\\
%&\hspace{4.4cm}\times\Bigg(\Bigg(\sum_{k\in\NN_{i_2j_2}} Y_{i_2k}Y_{j_2k}\Bigg)^2+\Bigg(\sum_{k\in\NN_{i_2j_2}}^n\hat M_{i_2k}Y_{j_2k}\Bigg)^2\ind_A\Bigg)\Bigg\}\\
%&\lesssim %\frac{(\log d)^2}{d^2n^6}\sum_{\substack{i_1,i_2,j_1,j_2=1\\ \{i_1,j_1\}\cap\{i_2,j_2\}\neq\emptyset}}^d\sum_{k_1,k_2,k_3,k_4=1}^n\left|\E Y_{i_1k_1}Y_{j_1k_1}Y_{i_1k_2}Y_{j_1k_2}Y_{i_2k_3}Y_{j_2k_3}Y_{i_2k_4}Y_{j_2k_4}\right|\\
%%&\hspace{7cm}\times\varepsilon_{i_1k_1}\varepsilon_{j_1k_1}\varepsilon_{i_1k_2}\varepsilon_{j_1k_2}\varepsilon_{i_2k_3}\varepsilon_{j_2k_3}\varepsilon_{i_2k_4}\varepsilon_{j_2k_4}\\
%&\hspace{0.2cm}+\frac{(\log d)^2}{d^2n^6}\sum_{\substack{i_1,i_2,j_1,j_2=1\\ \{i_1,j_1\}\cap\{i_2,j_2\}\neq\emptyset}}^d\sum_{k_1,k_2,k_3,k_4=1}^n\E Y_{i_1k_1}Y_{j_1k_1}Y_{i_1k_2}Y_{j_1k_2}\hat M_{i_2k_3}Y_{j_2k_3}\hat M_{i_2k_4}Y_{j_2k_4}\\
%&\hspace{5cm}\times\varepsilon_{i_1k_1}\varepsilon_{j_1k_1}\varepsilon_{i_1k_2}\varepsilon_{j_1k_2}\varepsilon_{i_2k_3}\varepsilon_{j_2k_3}\varepsilon_{i_2k_4}\varepsilon_{j_2k_4}\ind_A\\
%&\hspace{0.2cm}+\frac{(\log d)^2}{d^2n^6}\sum_{\substack{i_1,i_2,j_1,j_2=1\\ \{i_1,j_1\}\cap\{i_2,j_2\}\neq\emptyset}}^d\sum_{k_1,k_2,k_3,k_4=1}^n\E \hat M_{i_1k_1}Y_{j_1k_1}\hat M_{i_1k_2}Y_{j_1k_2}\hat M_{i_2k_3}Y_{j_2k_3}\hat M_{i_2k_4}Y_{j_2k_4}\\
%&\hspace{5cm}\times\varepsilon_{i_1k_1}\varepsilon_{j_1k_1}\varepsilon_{i_1k_2}\varepsilon_{j_1k_2}\varepsilon_{i_2k_3}\varepsilon_{j_2k_3}\varepsilon_{i_2k_4}\varepsilon_{j_2k_4}\ind_A\\
&\lesssim I_{1,1}+I_{1,2}+I_{1,3},
\end{align*}
with
\begin{align*}
I_{1,1}&=\frac{(\log n)^2}{d^2n^6}\sum_{\substack{i_1,i_2,j_1,j_2=1\\ \{i_1,j_1\}\cap\{i_2,j_2\}\neq\emptyset}}^d\sum_{k_1,k_2,k_3,k_4=1}^n\left|\E Y_{i_1k_1}Y_{j_1k_1}Y_{i_1k_2}Y_{j_1k_2}Y_{i_2k_3}Y_{j_2k_3}Y_{i_2k_4}Y_{j_2k_4}\right|,\\
I_{1,2}&=\frac{(\log n)^2}{d^2n^6}\sum_{\substack{i_1,i_2,j_1,j_2=1\\ \{i_1,j_1\}\cap\{i_2,j_2\}\neq\emptyset}}^d\sum_{k_1,k_2,k_3,k_4=1}^n\Big\arrowvert\E \Big(Y_{i_1k_1}Y_{j_1k_1}Y_{i_1k_2}Y_{j_1k_2}\hat M_{i_2k_3}Y_{j_2k_3}\hat M_{i_2k_4}Y_{j_2k_4}\\
&\hspace{5cm}\times\varepsilon_{i_1k_1}\varepsilon_{j_1k_1}\varepsilon_{i_1k_2}\varepsilon_{j_1k_2}\varepsilon_{i_2k_3}\varepsilon_{j_2k_3}\varepsilon_{i_2k_4}\varepsilon_{j_2k_4}\ind_A\Big)\Big\arrowvert,\\
I_{1,3}&=\frac{(\log n)^2}{d^2n^6}\sum_{\substack{i_1,i_2,j_1,j_2=1\\ \{i_1,j_1\}\cap\{i_2,j_2\}\neq\emptyset}}^d\sum_{k_1,k_2,k_3,k_4=1}^n\Big\arrowvert\E \Big(\hat M_{i_1k_1}Y_{j_1k_1}\hat M_{i_1k_2}Y_{j_1k_2}\hat M_{i_2k_3}Y_{j_2k_3}\hat M_{i_2k_4}Y_{j_2k_4}\\
&\hspace{5cm}\times\varepsilon_{i_1k_1}\varepsilon_{j_1k_1}\varepsilon_{i_1k_2}\varepsilon_{j_1k_2}\varepsilon_{i_2k_3}\varepsilon_{j_2k_3}\varepsilon_{i_2k_4}\varepsilon_{j_2k_4}\ind_A\Big)\Big\arrowvert.
\end{align*}
For $I_{1,1}$ we have 
\begin{align*}
I_{1,1}%&=\frac{(\log d)^2}{d^2n^6}\sum_{\substack{i_1,i_2,j_1,j_2=1\\ \{i_1,j_1\}\cap\{i_2,j_2\}\neq\emptyset}}^d\sum_{k_1,k_2,k_3,k_4=1}^n|\E Y_{i_1k_1}Y_{j_1k_1}Y_{i_1k_2}Y_{j_1k_2}Y_{i_2k_3}Y_{j_2k_3}Y_{i_2k_4}Y_{j_2k_4}|\\
%&\hspace{7cm}\times\varepsilon_{i_1k_1}\varepsilon_{j_1k_1}\varepsilon_{i_1k_2}\varepsilon_{j_1k_2}\varepsilon_{i_2k_3}\varepsilon_{j_2k_3}\varepsilon_{i_2k_4}\varepsilon_{j_2k_4}\\
&=\frac{(\log n)^2}{d^2n^6}\sum_{\substack{i_1,i_2,j_1,j_2=1\\ \{i_1,j_1\}\cap\{i_2,j_2\}\neq\emptyset \\ i_1\neq j_1\vee i_2\neq j_2}}^d\sum_{k_1,k_2,k_3,k_4=1}^n|\E Y_{i_1k_1}Y_{j_1k_1}Y_{i_1k_2}Y_{j_1k_2}Y_{i_2k_3}Y_{j_2k_3}Y_{i_2k_4}Y_{j_2k_4}|\\
%&\hspace{7cm}\times\varepsilon_{i_1k_1}\varepsilon_{j_1k_1}\varepsilon_{i_1k_2}\varepsilon_{j_1k_2}\varepsilon_{i_2k_3}\varepsilon_{j_2k_3}\varepsilon_{i_2k_4}\varepsilon_{j_2k_4}\\
&\hspace{0.5cm}+\frac{(\log n)^2}{d^2n^6}\sum_{i=1}^d\sum_{k_1,k_2,k_3,k_4=1}^n\E Y_{ik_1}^2Y_{ik_2}^2Y_{ik_3}^2Y_{ik_4}^2\\%\varepsilon_{ik_1}\varepsilon_{ik_2}\varepsilon_{ik_3}\varepsilon_{ik_4}\\
%&\lesssim  \frac{(\log d)^2}{d^2n^6} \left(d^3n^4n^{-2\alpha}+d^3n^3+d^3n^2n^{2\alpha}+d^3nn^{4\alpha}+dn^4+dn^3n^{2\alpha}+dn^2n^{4\alpha}+dnn^{6\alpha}\right)\\
&\lesssim \frac{n^{4\alpha}(\log n)^2}{n^4},
\end{align*}
where we used for $i_1,j_1,i_2,j_2$ with $\{i_1,j_1\}\cap\{i_2,j_2\}\neq\emptyset$ and $i_1\neq j_1$ or $i_2\neq j_2$ the bounds
\begin{align*}
|\E Y_{i_1k_1}Y_{j_1k_1}Y_{i_1k_2}Y_{j_1k_2}Y_{i_2k_3}Y_{j_2k_3}Y_{i_2k_4}Y_{j_2k_4}|\lesssim
\begin{cases}
 n^2d^{4\alpha-2}          &\text{ for }\#\{k_1,k_2,k_3,k_4\}=1\\
 nd^{2\alpha-1}              &\text{ for }\#\{k_1,k_2,k_3,k_4\}=2\\
1              &\text{ for }\#\{k_1,k_2,k_3,k_4\}=3\\
n^{-2}d^{2-4\alpha}                   &\text{ for }\#\{k_1,k_2,k_3,k_4\}=4
\end{cases}
\end{align*}
and for $i=i_1=j_1=i_2=j_2$ the estimates
\begin{align*}
\E Y_{ik_1}^2Y_{ik_2}^2Y_{ik_3}^2Y_{ik_4}^2\lesssim
\begin{cases}
n^3d^{6\alpha-3}           &\text{ for }\#\{k_1,k_2,k_3,k_4\}=1\\
n^2d^{4\alpha-2}             &\text{ for }\#\{k_1,k_2,k_3,k_4\}=2\\
nd^{2\alpha-1}           &\text{ for }\#\{k_1,k_2,k_3,k_4\}=3\\
1                &\text{ for }\#\{k_1,k_2,k_3,k_4\}=4.
\end{cases}
\end{align*}
These estimates are deduced by the following consideration. First, the expectation is factorized by independence into a product of moments of the $Y_{ik}$'s. Then applying \eqref{eq: tilde1} and \eqref{eq: tilde}, the $l$-th moment is bounded by
$$\left|\E Y_{ik}^l\right|\lesssim \left(n^{1/2}d^{\alpha-1/2}\right)^{l-2}, \ \ l\in\N.$$
Now we evaluate $I_{1,2}$. Using \eqref{eq: C on A} in \eqref{eq: I12}
\begin{align}
I_{1,2}&=\frac{(\log n)^2}{d^2n^6}\sum_{\substack{i_1,i_2,j_1,j_2=1\\ \{i_1,j_1\}\cap\{i_2,j_2\}\neq\emptyset}}^d\notag \\
&\hspace{0.5cm}\sum_{k_1,\dots,k_6=1}^n\big|\E Y_{i_1k_1}Y_{j_1k_1}Y_{i_1k_2}Y_{j_1k_2}Y_{j_2k_3}Y_{j_2k_4}Y_{i_2k_5}Y_{i_2k_6}\big|\notag \\
&\hspace{2.2cm}\times \E \Big(\frac{1}{n^2}\hat W_{i_2i_2}^2\varepsilon_{i_1k_1}\varepsilon_{j_1k_1}\varepsilon_{i_1k_2}\varepsilon_{j_1k_2}\varepsilon_{i_2k_3}\varepsilon_{j_2k_3}\varepsilon_{i_2k_4}\varepsilon_{j_2k_4}\varepsilon_{i_2k_5}\varepsilon_{i_2k_6}\ind_A\Big)\notag\\
&\lesssim \frac{(\log n)^2}{d^2n^8}\sum_{\substack{i_1,i_2,j_1,j_2=1\\ \{i_1,j_1\}\cap\{i_2,j_2\}\neq\emptyset}}^d\sum_{k_1,\dots,k_6=1}^n |\E Y_{i_1k_1}Y_{j_1k_1}Y_{i_1k_2}Y_{j_1k_2}Y_{i_2k_5}Y_{j_2k_3}Y_{i_2k_6}Y_{j_2k_4}| \label{eq: I12}\\
%&=\frac{(\log d)^2}{d^2n^8}\sum_{\substack{i_1,i_2,j_1,j_2=1\\ \{i_1,j_1\}\cap\{i_2,j_2\}\neq\emptyset \\ i_1\neq j_1 \vee i_2\neq j_2}}^d\sum_{k_1,k_2,k_3,k_4,k_5,k_6=1}^n |\E Y_{i_1k_1}Y_{j_1k_1}Y_{i_1k_2}Y_{j_1k_2}Y_{i_2k_5}Y_{j_2k_3}Y_{i_2k_6}Y_{j_2k_4}|\notag\\
%&\hspace{0.5cm}+ \frac{(\log d)^2}{d^2n^8}\sum_{i=1}^d\sum_{k_1,k_2,k_3,k_4,k_5,k_6=1}^n |\E Y_{ik_1}^2Y_{ik_2}^2Y_{ik_3}Y_{ik_4}Y_{ik_5}Y_{ik_6}|\notag\\
%&\lesssim  \frac{(\log d)^2}{d^2n^8}\big(d^3n^6n^{-4\alpha}+d^3n^5n^{-2\alpha}+d^3n^4+d^3n^3n^{2\alpha}+d^3n^2n^{4\alpha}+d^3nn^{4\alpha}\notag\\
%&\hspace{3cm}+dn^6n^{-4\alpha}+dn^5n^{-2\alpha}+dn^4+d^3n^3n^{2\alpha}+dn^2n^{4\alpha}+dnn^{6\alpha}\big)\notag\\
&\lesssim \frac{(\log n)^2d^{6\alpha}}{d^2n^4},\notag
\end{align}
where we used for the bound
\begin{align*}
|\E Y_{i_1k_1}Y_{j_1k_1}&Y_{i_1k_2}Y_{j_1k_2}Y_{i_2k_5}Y_{j_2k_3}Y_{i_2k_6}Y_{j_2k_4}|\\ 
&\lesssim \left(\frac{d}{n}\right)^{i-4}d^{2\alpha(4-i)} \ \ \text{for }i=\#\{k_1,k_2,k_3,k_4,k_5,k_6\}.
%\begin{cases}
% n^{6\alpha}        &\text{ for }\#\{k_1,k_2,k_3,k_4,k_5,k_6\}=1\\
% n^{4\alpha}            &\text{ for }\#\{k_1,k_2,k_3,k_4,k_5,k_6\}=2\\
% n^{2\alpha}           &\text{ for }\#\{k_1,k_2,k_3,k_4,k_5,k_6\}=3\\
%        1           &\text{ for }\#\{k_1,k_2,k_3,k_4,k_5,k_6\}=4\\
%n^{-2\alpha}  &\text{ for }\#\{k_1,k_2,k_3,k_4,k_5,k_6\}=5\\
%n^{-4\alpha}   &\text{ for }\#\{k_1,k_2,k_3,k_4,k_5,k_6\}=6.
%\end{cases}
\end{align*}
Again by \eqref{eq: C on A}, we obtain with the same argument as for $I_{1,2}$
\begin{align*}
I_{1,3}&\lesssim \frac{(\log n)^2}{d^2n^{10}}\sum_{\substack{i_1,i_2,j_1,j_2=1\\ \{i_1,j_1\}\cap\{i_2,j_2\}\neq\emptyset}}^d \ \sum_{k_1,\dots,k_8=1}^n|\E Y_{i_1k_5}Y_{i_1k_6}Y_{j_1k_1}Y_{j_1k_2}Y_{i_2k_7}Y_{i_2k_8}Y_{j_2k_3}Y_{j_2k_4}|\\
%&= \frac{(\log d)^2}{d^2n^{10}}\sum_{\substack{i_1,i_2,j_1,j_2=1\\ \{i_1,j_1\}\cap\{i_2,j_2\}\neq\emptyset \\ i_1\neq j_1 \vee i_2\neq j_2}}^d \ \sum_{k_1,\dots,k_8=1}^n|\E Y_{i_1k_5}Y_{i_1k_6}Y_{j_1k_1}Y_{j_1k_2}Y_{i_2k_7}Y_{i_2k_8}Y_{j_2k_3}Y_{j_2k_4}|\\
%&\hspace{0.5cm}+\frac{(\log d)^2}{d^2n^{10}}\sum_{i=1}^d \ \sum_{k_1,\dots,k_8=1}^n|\E Y_{ik_5}Y_{ik_6}Y_{ik_1}Y_{ik_2}Y_{ik_7}Y_{ik_8}Y_{ik_3}Y_{ik_4}|\\
%&\lesssim \frac{(\log d)^2}{d^2n^{10}}\left( d^3n^2n^{4\alpha}+dnn^{6\alpha}\right)\\
&\lesssim \frac{(\log n)^2d^{6\alpha}}{d^2n^6}
\end{align*}
with
\begin{align*}
|\E Y_{i_1k_5}Y_{i_1k_6}&Y_{j_1k_1}Y_{j_1k_2}Y_{i_2k_7}Y_{i_2k_8}Y_{j_2k_3}Y_{j_2k_4}|\\
&\lesssim  \left(\frac{d}{n}\right)^{i-4}d^{2\alpha(4-i)} \ \ \text{for }i=\#\{k_1,k_2,k_3,k_4,k_5,k_6,k_7,k_8\}.
%\begin{cases}
% n^{6\alpha}        &\text{ for }\#\{k_1,k_2,k_3,k_4,k_5,k_6,k_7,k_8\}=1\\
% n^{4\alpha}            &\text{ for }\#\{k_1,k_2,k_3,k_4,k_5,k_6,k_7,k_8\}=2\\
% n^{2\alpha}           &\text{ for }\#\{k_1,k_2,k_3,k_4,k_5,k_6,k_7,k_8\}=3\\
%        1           &\text{ for }\#\{k_1,k_2,k_3,k_4,k_5,k_6,k_7,k_8\}=4\\
%n^{-2\alpha}  &\text{ for }\#\{k_1,k_2,k_3,k_4,k_5,k_6,k_7,k_8\}=5\\
%n^{-4\alpha}   &\text{ for }\#\{k_1,k_2,k_3,k_4,k_5,k_6,k_7,k_8\}=6\\
%n^{-6\alpha}   &\text{ for }\#\{k_1,k_2,k_3,k_4,k_5,k_6,k_7,k_8\}=7\\
%n^{-8\alpha}   &\text{ for }\#\{k_1,k_2,k_3,k_4,k_5,k_6,k_7,k_8\}=8.
%\end{cases}
\end{align*}
As concerns $I_2$, define
\begin{align*}
U_{ij,d,n}=\hat G_{ij,d,n}^2\Bigg\{\Bigg(\sum_{k\in\NN_{ij,d,n}} Y_{ik,d,n}Y_{jk,d,n}\Bigg)^2+\Bigg(\sum_{k\in\NN_{ij,d,n}}\hat M_{ik,d,n}Y_{jk,d,n}\Bigg)^2\Bigg\},
\end{align*}
and note that $U_{ij,d,n}$ is bounded by a constant multiple of $n^6d^{4\alpha-2}$ because $\NN_{ij,d,n}$ contains at most $n$ elements, $\hat G_{ij,d,n}^2\lesssim 1$ since by Subsection \ref{subsec: modify} $\min_i p_{i,d,n}$ is uniformly bounded away from zero, $|Y_{ik,d,n}|\lesssim n^{1/2}d^{\alpha-1/2}$ by Subsection \ref{subsec: trunc T} and Subsection \ref{subsec: trunc X 1}, 
$$|\hat M_{ik,d,n}|=\left|\frac{1}{N_{ii,d,n}}\sum_{l\in\NN_{ii,d,n}}Y_{il,d,n}\right|\lesssim n^{1/2}d^{\alpha-1/2}.$$ 
Hence,
\begin{align*}
I_2&=\frac{1}{d^2}\sum_{\substack{i_1,i_2,j_1,j_2=1\\ \{i_1,j_1\}\cap\{i_2,j_2\}=\emptyset }}^d\E\left( U_{i_1j_1}U_{i_2j_2}\ind_A\right)-\E\left( U_{i_1j_1}\ind_A\right)\E\left( U_{i_2j_2}\ind_A\right)\\
&=\frac{1}{d^2}\sum_{\substack{i_1,i_2,j_1,j_2=1\\ \{i_1,j_1\}\cap\{i_2,j_2\}=\emptyset }}^d\Big\{-\E\left( U_{i_1j_1}U_{i_2j_2}\ind_{A^c}\right)+\E\left( U_{i_1j_1}\ind_{A^c}\right)\E\left( U_{i_2j_2}\right)\\
&\hspace{4cm}+\E\left( U_{i_1j_1}\right)\E\left( U_{i_2j_2}\ind_{A^c}\right)-\E \left(U_{i_1j_1}\ind_{A^c}\right)\E\left( U_{i_2j_2}\ind_{A^c}\right)\Big\}\\
&\le \frac{2}{d^2}\sum_{\substack{i_1,i_2,j_1,j_2=1\\ \{i_1,j_1\}\cap\{i_2,j_2\}=\emptyset }}^d\E \left(U_{i_1j_1}\right)\E \left( U_{i_2j_2}\ind_{A^c}\right)\\
%&\lesssim d^{-2}\sum_{\substack{i_1,i_2,j_1,j_2=1\\ \{i_1,j_1\}\cap\{i_2,j_2\}=\emptyset }}^d\E \Bigg\{ \hat G_{i_1j_1}^2\Bigg(\Bigg(\sum_{k\in\NN_{i_1j_1}} Y_{i_1k}Y_{j_1k}\Bigg)^2+\Bigg(\sum_{k\in\NN_{i_1j_1}}\hat M_{i_1k}Y_{j_1k}\Bigg)^2\Bigg)\ind_{A^c}\Bigg\}\\
%&\hspace{3cm}\times\E \Bigg\{ \hat G_{i_2j_2}^2\Bigg(\Bigg(\sum_{k\in\NN_{i_2j_2}} Y_{i_2k}Y_{j_2k}\Bigg)^2+\Bigg(\sum_{k\in\NN_{i_2j_2}}\hat M_{i_2k}Y_{j_2k}\Bigg)^2\Bigg)\Bigg\}\\
&\lesssim d^{8\alpha-2}n^{12}\P(A^c)\\
%&\lesssim d^2n^{4+4\alpha}d^2n^{-\frac{\gamma^2}{2}}\\
&\lesssim n^{12+8\alpha-2\gamma^2}.
\end{align*}
Note that by choice of $\gamma$ in \eqref{eq: gamma} the exponent in the last line is strictly smaller than $-1$. Therefore by the lemma of Borel-Cantelli $h_{d,n}\ind_{A_{d,n}}\hspace{-1mm}\rightarrow 0$ almost surely ($d\to\infty$). In the following subsection we redefine the matrix $\hat T_{d,n}$ by $\tilde T_{d,n}$.

\subsection{Step VI: Removing $n^{-1}W\circ((Y\circ\varepsilon)(\hat M\circ\varepsilon)^\ast +(\hat M\circ\varepsilon)(Y\circ\varepsilon)^\ast )$}
By the same arguments as in Subsection $\ref{subsec: trunc X 1}$ we return to the original centered and standardized matrix $X_{d,n}$. Define
$$\tilde T_{d,n}=\frac{1}{n}W_{d,n}\circ \Big((Y_{d,n}\circ \varepsilon_{d,n})(Y_{d,n}\circ\varepsilon_{d,n})^\ast \Big).$$
We prove that $$d_{L}(\mu^{T_{d,n}},\mu^{\tilde T_{d,n}})\rightarrow 0$$ almost surely. For $\gamma>1$, define the event
$$\tilde A_{d,n}=\left\{\max_i |N_{ii,d,n}-np_{i,d,n}|<\gamma\sqrt{n\log{n}} \right\}.$$
Note that
$$\left\{\max_i\left|N_{ii,d,n}-np_{i,d,n}\right|<\gamma\sqrt{n\log n}\right\}=\left\{\max_i\left|\sum_{k=1}^n\left(\varepsilon_{ik,d,n}-p_{i,d,n}\right)\right|<\gamma\sqrt{n\log n}\right\}$$
for $d$ sufficently large. The union bound and Hoeffding's inequality yield
\begin{equation}
\P\left(\tilde A^c_{d,n}\right)\le 2dn^{-2\gamma^2}\label{eq: bound A}
\end{equation}
By the Borel-Cantelli Lemma all but finitely many of the events $(A_d)$ occur. 
Moreover, for $\frac{1}{2}<\eta<1$ define the event $$B_{d,n}=\left\{\sum_{i=1}^d\ind\left\{|\hat m_{i,d,n}|>\sqrt\frac{d^{2(1-\eta)}}{n}\right\}<d^{\eta}\right\}.$$
First observe that by the same type of argument as used in \eqref{eq: C on A} and by Markov's inequality
\begin{align*}
\max_i\P&\left(|\hat m_{i,d,n}|>\sqrt\frac{d^{2(1-\eta)}}{n},\tilde A_{d,n}\right)\\
&\le \max_i\P\left(\frac{2}{n\min p_{i,d,n}}\left|\sum_k\varepsilon_{ik,d,n}Y_{ik,d,n}\right|>\sqrt\frac{d^{2(1-\eta)}}{n}\right)\\
&\le \frac{4\E\left(\sum_k\varepsilon_{ik,d,n}Y_{ik,d,n}\right)^2}{n^2\min_i p_{i,d,n}^2\frac{d^{2(1-\eta)}}{n}}\\
&\lesssim d^{2\eta -2},
\end{align*}
where we have used
$$\frac{1}{N_{ii,d,n}}\le \frac{2}{n\min p_{i,d,n}}$$ 
for $d$ sufficiently large in the first inequality.
In particular,
\begin{align*}
\E \ind\left\{|\hat m_{i,d,n}|>\sqrt\frac{d^{2(1-\eta)}}{n}\right\}&=\E \ind\left\{|\hat m_{i,d,n}|>\sqrt\frac{d^{2(1-\eta)}}{n}\right\}\left(\ind_{\tilde A_{d,n}}+\ind_{\tilde A_{d,n}^c}\right)\\
&\le\kappa \left(d^{2\eta-2}+dn^{-2\gamma^2}\right)
\end{align*}
for some suitably chosen constant $\kappa>0$.
%By the elementary inequality
%$$\P((D\cap A)^c)=\P(D^c\cup A^c)=\P((D^c\cap A)\cup A^c)\le \P(D^c\cap A)+\P(A^c)$$
%and by \eqref{eq: bound A}, it remains to bound $\P(D^c\cap A).$ 
We conclude for $d$ sufficiently large by Hoeffding's inequality
\begin{align*}
\P\left(B_{d,n}^c\right)&\le\P\Bigg(\sum_{i=1}^d\ind\left\{\hat m_{i,d,n}>\sqrt\frac{d^{2(1-\eta)}}{n}\right\}-\E\ind\left\{\hat m_{i,d,n}>\sqrt\frac{d^{2(1-\eta)}}{n}\right\}\\
&\hspace{6cm}> d^{\eta}-\kappa \left(d^{2\eta-1}-d^{2}n^{-2\gamma^2}\right)\Bigg)\\
&\le \P\Bigg(\sum_{i=1}^d\ind\left\{\hat m_{i,d,n}>\sqrt\frac{d^{2(1-\eta)}}{n}\right\}-\E\ind\left\{\hat m_{i,d,n}>\sqrt\frac{d^{2(1-\eta)}}{n}\right\}>\frac{1}{2}d^{\eta}\Bigg)\\
%&\le \exp\left(-\frac{d^{2\eta}}{8\left(2\kappa d^{2\eta-1}+\frac{1}{6}d^{\eta}\right)}\right)\\
%&\le \exp\left(-\frac{3d^{\eta}}{8}\right).
&\le \exp\left(-\frac{d^{2\eta-1}}{2}\right).
\end{align*}
By the Borel-Cantelli Lemma all but finitely many of the events $(B_{d,n})$ occur. \\
Let $\gamma'>0$ be an appropriate constant such that for all $n$
$$2\sum_{k=1}^n\E|Y_{ik,d,n}|\le \gamma' n.$$
Then, define the event
$$D_{d,n}=\left\{ \sum_{i=1}^d\ind\left\{\sum_{k=1}^n |Y_{ik,d,n}|>\gamma'n\right\}\le \frac{d}{\log d}\right\}.$$
In the next step we shall prove that $\P(\limsup_{d} D_{d,n}^c)=0$ in order to remove the corresponding rows from the matrix $Y$. By Chebychev's inequality we have 
%for some suitably chosen constant $\kappa'$ and sufficiently large $n$
\begin{align*}
\max_i \P&\left(\sum_{k=1}^n|Y_{ik,d,n}|>\gamma'n\right)\\
&\le \max_i \P\left(\sum_{k=1}^n|Y_{ik,d,n}|-\E|Y_{ik,d,n}|>\gamma'n-\sum_{k=1}^n\E|Y_{ik,d,n}|\right)\\
&\le \max_i \P\left(\sum_{k=1}^n|Y_{ik,d,n}|-\E|Y_{ik,d,n}|>\frac{1}{2}\gamma'n\right)\\
&\le \frac{\kappa'}{n}
\end{align*}
for an appropriate constant $\kappa'>0$. Again, by the Hoeffding inequality for sufficiently large $d$,
\begin{align*}
\P(D_{d,n}^c)&\le \P \Bigg(\sum_{i=1}^d\ind\left\{\sum_{k=1}^n|Y_{ik,d,n}|>\gamma'n\right\}-\E \ind\left\{\sum_{k=1}^n|Y_{ik,d,n}|>\gamma'n\right\}> \frac{d}{\log d}-\frac{\kappa'd}{n}\Bigg)\\
&\le \P \left(\sum_{i=1}^d\ind\left\{\sum_{k=1}^n|Y_{ik,d,n}|>\gamma'n\right\}-\E \ind\left\{\sum_{k=1}^n|Y_{ik,d,n}|>\gamma'n\right\}> \frac{1}{2}\frac{d}{\log d}\right)\\
%&\le \exp\left(-\frac{\frac{d^2}{(\log n)^2}}{\frac{d}{n(\log n)^2}+\frac{d}{\log n}}\right)\\
%&\le\exp\left(-\frac{3d}{8\log d}\right)
&\le \exp\left(-\frac{d}{2(\log d)^2}\right),
\end{align*}
and therefore $\P(\limsup_{d} D_{d,n}^c)=0$.
Now let
\begin{align*}
\check T_{d,n}&=\frac{1}{n}W_{d,n}\circ\bigg((Y_{d,n}\circ\varepsilon_{d,n})(Y_{d,n}\circ\varepsilon_{d,n})^\ast -(\tilde Y_{d,n}\circ \varepsilon_{d,n})(\tilde M_{d,n}\circ \varepsilon_{d,n})^\ast \\
&\hspace{4cm}-(\tilde M_{d,n}\circ \varepsilon_{d,n})(\tilde Y_{d,n}\circ \varepsilon_{d,n})^\ast \bigg),\end{align*}
where 
$$\tilde M_{ik,d,n}=\hat M_{ik,d,n}\ind\left\{|\hat M_{ik,d,n}|\le\sqrt\frac{d^{2(1-\eta)}}{n}\right\}$$ 
and
$$\tilde Y_{ik,d,n}=Y_{ik,d,n}\ind\left\{\sum_{l=1}^n|Y_{il,d,n}|\le\gamma'n\right\}.$$
By Theorem \ref{theorem: A43} and due to $\P(\limsup_d (D_{d,n}^c\cup B_{d,n}^c))=0$ we conclude by the same type of arguments as in Subsection \ref{subsec: trunc X 1}
\begin{align*}
d_L&\left(\mu^{\hat T_{d,n}},\mu^{\check T_{d,n}}\right)\\
&\hspace{0.2cm}\le \frac{1}{d}\rank\Bigg(\frac{1}{n}W_{d,n}\circ \Big((Y_{d,n}\circ \varepsilon_{d,n})(\hat M_{d,n}\circ \varepsilon_{d,n})^\ast +(\hat M_{d,n}\circ \varepsilon_{d,n})(Y_{d,n}\circ \varepsilon_{d,n})^\ast \\
&\hspace{3cm}-(\tilde Y_{d,n}\circ \varepsilon_{d,n})(\tilde M_{d,n}\circ \varepsilon_{d,n})^\ast -(\tilde M_{d,n}\circ \varepsilon_{d,n})(\tilde Y_{d,n}\circ \varepsilon_{d,n})^\ast \Big)\Bigg)\\
&\hspace{0.2cm}\overset{\text{a.s.}}{\longrightarrow} 0\ \ \text{as} \ \ d\to\infty.
\end{align*}
In order to save space the explicit dependence on $d$ and $n$ is suppressed in the displays until the end of the section. By Theorem \ref{theorem: inequality Levy distance trace},
\begin{align}
d_L^3&(\mu^{\check T},\mu^{\tilde T})\\
&\le \frac{1}{d}\tr\Bigg(\left(\frac{1}{n}W\circ \left((\tilde Y\circ \varepsilon)(\tilde M\circ \varepsilon)^\ast +(\tilde M\circ \varepsilon)(\tilde Y\circ \varepsilon)^\ast \right)\right)\notag\\
&\hspace{3cm}\times\left(\frac{1}{n}W\circ \left((\tilde Y\circ \varepsilon)(\tilde M\circ \varepsilon)^\ast+(\tilde M\circ \varepsilon)(\tilde Y\circ \varepsilon)^\ast \right)\right)\Bigg)\notag\\
&= \frac{2}{d}\tr\Bigg(\frac{1}{n^2}W^2\circ \Big((\tilde M\circ \varepsilon)(\tilde Y\circ \varepsilon)^\ast(\tilde Y\circ \varepsilon)(\tilde M\circ \varepsilon)^\ast +(\tilde Y\circ \varepsilon)(\tilde M\circ \varepsilon)^\ast(\tilde Y\circ \varepsilon)(\tilde M\circ \varepsilon)^\ast \Big)\Bigg)\notag\\
&\le \frac{4}{d}\tr\Bigg(\frac{1}{n^2}W^2\circ \left((\tilde M\circ \varepsilon)(\tilde Y\circ \varepsilon)^\ast (\tilde Y\circ \varepsilon)(\tilde M\circ \varepsilon)^\ast \right)\Bigg)\label{eq: elementary} \\
&\le \frac{4}{d}\sum_{i=1}^d \hat m_i^2\ind\left\{|\hat m_{i}|\le\sqrt\frac{d^{2(1-\eta)}}{n}\right\}\notag\\
&\hspace{3cm}\times\sum_{j=1}^d\frac{1}{n^2}W_{ij}^2\left(\sum_{k=1}^n\varepsilon_{ik}\varepsilon_{jk}Y_{jk}\right)^2\ind\left\{\sum_{l=1}^n|Y_{jl}|\le\gamma'n\right\}\notag \\
&\lesssim \frac{d^{2(\eta-1)}}{dn^3}\sum_{i=1}^d  \sum_{j=1}^d\left(\sum_{k=1}^n\varepsilon_{ik}\varepsilon_{jk}Y_{jk}\right)^2\ind\left\{\sum_{l=1}^n|Y_{jl}|\le\gamma'n\right\}\label{eq: J},
\end{align}
where we have used the elementary inequality
$$\tr(C^2)\le \tr(CC^\ast) \ \ \text{for any } C\in \R^{d\times d}$$
in \eqref{eq: elementary}.
It remains to prove that the last line \eqref{eq: J} converges to zero almost surely.  Let $\eta<\eta'<1$, and rewrite
\begin{align}
\max_{i}\P\Bigg(\sum_{j=1}^d&\left(\sum_{k=1}^n\varepsilon_{ik}\varepsilon_{jk}Y_{jk}\right)^2\ind\left\{\sum_{l=1}^n|Y_{jl}|\le\gamma'n\right\}\ge \frac{n^3}{d^{2(\eta'-1)}}\Bigg)\notag\\
%&\hspace{0.5cm}\le \max_{i}\P\left(\sum_{j=1}^d\left(\sum_{k=1}^n\varepsilon_{ik}\varepsilon_{jk}Y_{jk}\right)^2\ind\left\{\sum_{l=1}^n|Y_{jl}|\le\gamma'n\right\}\ge \frac{n^3}{d^{2(\eta'-1)}},A\right)\notag\\
%&\hspace{2cm}+\max_{i}\P\left(\sum_{j=1}^d\left(\sum_{k=1}^n\varepsilon_{ik}\varepsilon_{jk}Y_{jk}\right)^2\ind\left\{\sum_{l=1}^n|Y_{jl}|\le\gamma'n\right\}\ge \frac{n^3}{d^{2(\eta'-1)}},A^c\right)\notag\\
&\hspace{-0.6cm}= \max_{i}\E\left\{\P\left(\sum_{j=1}^d\left(\sum_{k=1}^n\varepsilon_{ik}\varepsilon_{jk}Y_{jk}\right)^2\ind\left\{\sum_{l=1}^n|Y_{jl}|\le\gamma'n\right\}\ge \frac{n^3}{d^{2(\eta'-1)}}~\Bigg|~\varepsilon\right)\right\}\label{eq: conditional}
\end{align}
Define for $\eta'<\eta''<1$ the random variables
\begin{align*}
I_{ij,d,n}=\ind\left\{\left|\sum_{l=1}^n\varepsilon_{il,d,n}\varepsilon_{jl,d,n}Y_{jl,d,n}\right|\ge \sqrt{n}d^{(\eta''-1)}\right\},~1\le i,j \le d.
\end{align*}
Then by Markov's inequality for the conditional probability and an appropriate constant $\kappa''>0$,
\begin{align*}
\E (I_{ij}\big|\varepsilon)&=\P\left(\left|\sum_{l=1}^n\varepsilon_{il}\varepsilon_{jl}Y_{jl}\right|\ge \sqrt{nd^{2(\eta''-1)}}~\Bigg|~\varepsilon\right)\le \sum_{l=1}^n\frac{\varepsilon_{il}\varepsilon_{jl}\E Y_{jl}^2}{nd^{2(\eta''-1)}}\le \frac{\kappa''}{d^{2(\eta''-1)}}.
\end{align*}
The inner conditional probability in line \eqref{eq: conditional} can be further estimated by
\begin{align*}
&\P\left(\sum_{j=1}^d\left(\sum_{k=1}^n\varepsilon_{ik}\varepsilon_{jk}Y_{jk}\right)^2\ind\left\{\sum_{l=1}^n|Y_{jl}|\le\gamma'n\right\}\ge \frac{n^3}{d^{2(\eta'-1)}}~\Bigg|~\varepsilon\right)\\
&\hspace{1cm}\le \P\left((\gamma'n)^2\sum_{j=1}^d\ind\left\{\sqrt{n}d^{(\eta''-1)}\le\left|\sum_{l=1}^n\varepsilon_{il}\varepsilon_{jl}Y_{jl}\right|\le \gamma'n\right\}\ge \frac{n^3}{2d^{2(\eta'-1)}}~\Bigg|~\varepsilon\right)\\
&\hspace{1.7cm}+P\left(nd^{2(\eta''-1)}\sum_{j=1}^d\ind\left\{\left|\sum_{l=1}^n\varepsilon_{il}\varepsilon_{jl}Y_{jl}\right|\le \sqrt{n}d^{(\eta''-1)}\right\}\ge \frac{n^3}{2d^{2(\eta'-1)}}~\Bigg|~\varepsilon\right),
\end{align*}
where the last conditional probability disappears for $d$ sufficiently large. For the first probability on the right hand side, we obtain
\begin{align*}
 \P&\left((\gamma'n)^2\sum_{j=1}^d\ind\left\{\sqrt{n}d^{(\eta''-1)}\le\left|\sum_{l=1}^n\varepsilon_{il}\varepsilon_{jl}Y_{jl}\right|\le \gamma'n\right\}\ge \frac{n^3}{2d^{2(\eta'-1)}}~\Bigg|~\varepsilon\right)\\
 &\leq \P\left((\gamma'n)^2\sum_{j=1}^d\Big(I_{ij}-\E(I_{ij}|\varepsilon)\Big)\ge \frac{n^3}{2d^{2(\eta'-1)}}-\kappa''\frac{(\gamma'n)^2d}{d^{2(\eta''-1)}}\Bigg|~\varepsilon\right)\\
&\le \P\left(\sum_{j=1}^d\Big(I_{ij}-\E(I_{ij}|\varepsilon)\Big)\ge \frac{n}{4\gamma'^2d^{2(\eta'-1)}}\Bigg|~\varepsilon\right)
\end{align*}
for $d$ sufficiently large. Finally, by Hoeffding's inequality the last line is bounded by
$$\exp\left(-\frac{n^2}{8\gamma'^4d^{4\eta'-3}}\right).$$
Altogether, \eqref{eq: J} is bounded by $d^{2(\eta-\eta')}$ with probability
\begin{align*}
&1-\P\left(\frac{d^{2(\eta-1)}}{dn^3}\sum_{i=1}^d\sum_{j=1}^d\left(\sum_{k=1}^n\varepsilon_{ik}\varepsilon_{jk}Y_{jk}\right)^2\ind\left\{\sum_{l=1}^n|Y_{jl}|\le\gamma'\right\}\ge d^{2(\eta-\eta')}\right)\\
&\hspace{1cm}\ge1-d\max_{i}\P\left(\sum_{j=1}^d\left(\sum_{k=1}^n\varepsilon_{ik}\varepsilon_{jk}Y_{jk}\right)^2\ind\left\{\sum_{l=1}^n|Y_{jl}|\le\gamma'n\right\}\ge \frac{n^3}{d^{2(\eta'-1)}}\right)\\
&\hspace{1cm}\ge1-d\exp\left(-\frac{n^2}{8\gamma'^4d^{4\eta'-3}}\right).
\end{align*}
By the Lemma of Borel-Cantelli, $$d_L^3(\mu^{\check T_{d,n}},\mu^{\tilde T_{d,n}})\to 0$$ almost surely. Consequently,
\begin{align*}
d_L(\mu^{\hat T_{d,n}},\mu^{\tilde T_{d,n}})&\le d_L(\mu^{\hat T_{d,n}},\mu^{\check T_{d,n}})+d_L(\mu^{\check T_{d,n}},\mu^{\tilde T_{d,n}})\overset{\text{a.s.}}{\longrightarrow} 0\ \text{ as $d\to\infty$.}
\end{align*}
Subsequently, we denote $\tilde T_{d,n}$ by $\hat T_{d,n}$.

\subsection{Step VII: Diagonal manipulation} Rewrite the matrix $\hat T_{d,n}$ in the following way
\begin{align*}
\hat T_{d,n}&=\frac{1}{n}(w_{d,n}w_{d,n}^\ast)\circ \Big((Y_{d,n}\circ \varepsilon_{d,n})(Y_{d,n}\circ\varepsilon_{d,n})^\ast\Big)\\
&\hspace{0.5cm}-\diag\Bigg[\frac{1}{n}(w_{d,n}w_{d,n}^\ast)\circ \Big((Y_{d,n}\circ \varepsilon_{d,n})(Y_{d,n}\circ\varepsilon_{d,n})^\ast\Big)\\
&\hspace{3cm}-\frac{1}{n}W_{d,n}\circ \Big((Y_{d,n}\circ \varepsilon_{d,n})(Y_{d,n}\circ\varepsilon_{d,n})^\ast\Big)\Bigg].
\end{align*}
In this step we replace the diagonal matrix 
\begin{align*}
\hat S_{d,n}:&=\diag\Bigg[\frac{1}{n}(w_{d,n}w_{d,n}^\ast)\circ \Big((Y_{d,n}\circ \varepsilon_{d,n})(Y_{d,n}\circ\varepsilon_{d,n})^\ast \Big)\\
&\hspace{2cm}-\frac{1}{n}W_{d,n}\circ \Big((Y_{d,n}\circ \varepsilon_{d,n})(Y_{d,n}\circ\varepsilon_{d,n})^\ast \Big)\Bigg]
\end{align*}
by its diagonal deterministic counterpart $S_{d,n}$ with $$S_{ii,d,n}=\frac{1-p_{i,d,n}}{p_{i,d,n}}T_{ii,d,n},~i=1,...,d.$$ 
Thereto, we use similar arguments as in the last subsection. In contrast to the last subsection we cannot simply rely on Markov's inequality since $Y_{ik,d,n}$ is assumed to possess only two moments. In order to save space the explicit dependence on $d$ and $n$ is suppressed in the displays until the end of the section. Note that for any $u>0$,
\begin{align*}
\alpha_\text{max}&=\max_{i=1,...,d}\P\left(\left|\hat S_{ii}-S_{ii}\right|>u\right)\\
&=\max_{i=1,...,d}\P\left(\left|\frac{1-p_i}{np_i}\sum_{k=1}^n\left(Y_{ik}^2\frac{\varepsilon_{ik}}{p_i}-T_{ii}\right)\right|>u\right)\\
&\le \max_{i=1,...,d}\P\Bigg(\left|\frac{1-p_i}{p_i}T_{ii}-\frac{1-p_i}{np_i}\sum_{k=1}^nY_{ik}^2\frac{\varepsilon_{ik}}{p_i}\right|>u,~\left|\sum_{k=1}^n(\varepsilon_{ik}-p_i)\right|>\sqrt{n\log n}\Bigg)\\
&\hspace{0.5cm}+\max_{i=1,...,d}\P\Bigg(\left|\frac{1-p_i}{p_i}T_{ii}-\frac{1-p_i}{np_i}\sum_{k=1}^nY_{ik}^2\frac{\varepsilon_{ik}}{p_i}\right|>u,~\left|\sum_{k=1}^n(\varepsilon_{ik}-p_i)\right|\le \sqrt{n\log n}\Bigg).\\
%&\le \max_{i=1,...,d} \Bigg[\P\left(\left|\sum_{k=1}^n(\varepsilon_{ik}-p_i)\right|>\sqrt{n\log n}\right)+\P\left(\left|\frac{1}{np_i}\sum_{k=1}^{np_i-\sqrt{n\log n}}Y_{ik}^2-T_{ii}\right|>u\right)\\
%&\hspace{1cm}+\P\left(\left|\frac{1}{np_i}\sum_{k=1}^{np_i+\sqrt{n\log n}}Y_{ik}^2-T_{ii}\right|>u\right)\Bigg]\\
%&\le \exp(-c\log n)+2\max_{i=1,...,d}\left[\P\left(\left|\frac{1}{np_i}\sum_{k=1}^{np_i}Y_{ik}^2-T_{ii}\right|>\frac{u}{2}\right)+\P\left(\frac{1}{np_i}\sum_{k=1}^{\sqrt{n\log n}}Y_{ik}^2>\frac{u}{2}\right)\right]\\
%&\le \exp(-c\log n)+2\max_{i=1,...,d}\Bigg[\P\left(\left|\frac{1}{np_i}\sum_{k=1}^{np_i}Y_{ik}^2-T_{ik}\right|>\frac{u}{2}\right)\\
%&\hspace{1cm}+\P\left(\frac{1}{np_i}\left|\sum_{k=1}^{\sqrt{n}\log n}(X_{ik}^2-1)\right|>\frac{c}{4}\right)+\P\left(\frac{\sqrt{n\log n}}{np_i}>\frac{c}{4}\right)\Bigg]\\
%&\longrightarrow 0.
\end{align*}
As concerns the first term in this last inequality, Hoeffding's inequality yields
\begin{align*}
&\max_{i=1,...,d}\P\left(\left|\frac{1-p_i}{p_i}T_{ii}-\frac{1-p_i}{np_i}\sum_{k=1}^nY_{ik}^2\frac{\varepsilon_{ik}}{p_i}\right|>u,~\left|\sum_{k=1}^n(\varepsilon_{ik}-p_i)\right|>\sqrt{n\log n}\right)\\
&\hspace{1cm}\le \max_{i=1,...,d} \P\left(\left|\sum_{k=1}^n(\varepsilon_{ik}-p_i)\right|>\sqrt{n\log n}\right)\\
&\hspace{1cm}\le 2n^{-2}.
\end{align*}
In order to bound the second term, note that
\begin{align}
&\max_{i=1,...,d}\P\left(\left|\frac{1-p_i}{p_i}T_{ii}-\frac{1-p_i}{np_i}\sum_{k=1}^nY_{ik}^2\frac{\varepsilon_{ik}}{p_i}\right|>u,~\left|\sum_{k=1}^n(\varepsilon_{ik}-p_i)\right|\le \sqrt{n\log n}\right)\notag \\
&=\max_{i=1,...,d}\sum_{l=\lceil np_i-\sqrt{n\log n}\rceil}^{\lfloor np_i+\sqrt{n\log n}\rfloor}\P\left(\left|\frac{1-p_i}{p_i}T_{ii}-\frac{1-p_i}{np_i}\sum_{k=1}^nY_{ik}^2\frac{\varepsilon_{ik}}{p_i}\right|>u,~\sum_{k=1}^n\varepsilon_{ik}=l\right)\notag \\
&=\max_{i=1,...,d}\sum_{l=\lceil np_i-\sqrt{n\log n}\rceil}^{\lfloor np_i+\sqrt{n\log n}\rfloor}\P\left(\left|\frac{1-p_i}{p_i}T_{ii}-\frac{1-p_i}{np_i}\sum_{k=1}^nY_{ik}^2\frac{\varepsilon_{ik}}{p_i}\right|>u~\Bigg|~\sum_{k=1}^n\varepsilon_{ik}=l\right)\notag\\
&\hspace{8cm}\times\P\left(\sum_{k=1}^n\varepsilon_{ik}=l\right)\notag \\
&=\max_{i=1,...,d}\sum_{l=\lceil np_i-\sqrt{n\log n}\rceil}^{\lfloor np_i+\sqrt{n\log n}\rfloor}\P\left(\left|\frac{1-p_i}{p_i}T_{ii}-\frac{1-p_i}{np_i}\sum_{k=1}^l\frac{Y_{ik}^2}{p_i}\right|>u\right)\P\left(\sum_{k=1}^n\varepsilon_{ik}=l\right) \label{eq: bedingt},
\end{align}
where the last identity holds true because $Y_{i1,d,n},\dots,Y_{in,d,n}$ are iid and jointly independent of $\varepsilon_{d,n}$. By the elementary inequality
\begin{align*}
\left|T_{ii}-\frac{1}{n}\sum_{k=1}^l\frac{Y_{ik}^2}{p_i}\right|&\le \left|T_{ii}-\frac{1}{n}\sum_{k=1}^{\lceil np_i-\sqrt{n\log n}\rceil}\frac{Y_{ik}^2}{p_i}\right| ~\vee~ \left|T_{ii}-\frac{1}{n}\sum_{k=1}^{\lfloor np_i+\sqrt{n\log n}\rfloor}\frac{Y_{ik}^2}{p_i}\right|,
\end{align*}
we conclude 
\begin{align}
\eqref{eq: bedingt}&\le \max_{i=1,...,d}\P\left(\left|\frac{1-p_i}{p_i}T_{ii}-\frac{1-p_i}{np_i}\sum_{k=1}^{\lceil np_i-\sqrt{n\log n}\rceil}\frac{Y_{ik}^2}{p_i}\right|>u\right)\notag \\
&\hspace{0.5cm}+\max_{i=1,...,d}\P\left(\left|\frac{1-p_i}{p_i}T_{ii}-\frac{1-p_i}{np_i}\sum_{k=1}^{\lfloor np_i+\sqrt{n\log n}\rfloor}\frac{Y_{ik}^2}{p_i}\right|>u\right)\notag \\
&\le 2\max_{i=1,...,d}\left[\P\left(\left|\frac{1-p_i}{p_i}T_{ii}-\frac{1-p_i}{np_i^2}\sum_{k=1}^{\lfloor np_i \rfloor}Y_{ik}^2\right|>\frac{u}{2}\right)\right.\notag\\
&\hspace{5cm}+\left.\P\left(\frac{1-p_i}{np_i^2}\sum_{k=1}^{\lceil \sqrt{n\log n}\rceil +1}Y_{ik}^2>\frac{u}{2}\right) \right]\notag \\
&\le 2\max_{i=1,...,d}\Bigg[\P\Bigg(\Bigg|\frac{1-p_i}{p_i}T_{ii}-\frac{1-p_i}{np_i^2}\sum_{k=1}^{\lfloor np_i \rfloor}T_{ii}X_{ik}^2\Bigg|>\frac{u}{2}\Bigg)\notag\\
&\hspace{5cm}+\frac{2T_{ii}(1-p_i)}{up_i^2}\left(\sqrt{\frac{\log n}{n}}+\frac{2}{n}\right)\Bigg].\notag
%\label{eq: 3.22}&\le 2\max_{i=1,...,d}\Bigg[\P\Bigg(\Bigg|\frac{1}{\lfloor np_i \rfloor}\sum_{k=1}^{\lfloor np_i \rfloor}(X_{ik}^2-1)\Bigg|>\frac{up_i}{2(T_{ii}\vee 1)}\Bigg)\\
%&\hspace{5cm}+\frac{2T_{ii}(1-p_i)}{up_i^2}\left(\sqrt{\frac{\log n}{n}}+\frac{2}{n}\right)\Bigg].
\end{align}
For $n$ sufficiently large, the last expression is bounded by
\begin{align}
2\max_{i=1,...,d}\Bigg[\P\Bigg(\Bigg|\frac{1}{\lfloor np_i \rfloor}\sum_{k=1}^{\lfloor np_i \rfloor}(X_{ik}^2-1)\Bigg|>\frac{up_i}{4(T_{ii}\vee 1)}\Bigg)+\frac{4T_{ii}(1-p_i)}{up_i^2}\sqrt{\frac{\log n}{n}}\Bigg].\label{eq: 3.22}
\end{align}
Note that by Subsection \ref{subsec: trunc T} and Subsection \ref{subsec: modify} 
$$\liminf_{d\to\infty} \min_{i=1,\dots,d}~ \frac{p_{i,d,n}}{T_{ii,d,n}\vee 1}>0 \ \ \text{ and }\ \ \liminf_{d\to\infty} \min_{i=1,\dots,d}\lfloor np_{i,d,n}\rfloor =\infty.$$
Hence, by the weak law of large numbers \eqref{eq: 3.22} converges to zero as $d\to \infty$ which implies $\alpha_{\max}\to 0$. 
Now, with $\alpha_i=\P\left(|\hat S_{ii}-S_{ii}|>u\right)$, $i=1,\dots,d$,
\begin{align*}
&\P\left(\sum_{i=1}^d\ind\left\{\left|\hat S_{ii}-S_{ii}\right|>u\right\}>2d\sqrt{\alpha_\text{max}\vee \sqrt{\frac{1}{d}}}\right)\\
&\hspace{2cm}\le \P\left(\sum_{i=1}^d\ind\left\{\left|\hat S_{ii}-S_{ii}\right|>u\right\}-\alpha_i>2d\sqrt{\alpha_\text{max}\vee \sqrt{\frac{1}{d}}}-d\alpha_\text{max}\right)\\
&\hspace{2cm}\le \P\left(\sum_{i=1}^d\ind\left\{\left|\hat S_{ii}-S_{ii}\right|>u\right\}-\alpha_i>d^{\frac{3}{4}}\right)\\
&\hspace{2cm}\le \exp\left(-2\sqrt{d}\right),
\end{align*}
where we used Hoeffding's inequality in the last line. Therefore,
\begin{align*}
\frac{1}{d}\sum_{i=1}^d\ind\left\{\left|\hat S_{ii,d,n}-S_{ii,d,n}\right|>u\right\}\overset{\text{a.s.}}{\longrightarrow}0
\end{align*}
as $d\to\infty$. Let $\tilde S_{d,n}$ be the diagonal matrix with entries $$\tilde S_{ii,d,n}=\hat S_{ii,d,n}\ind\left\{\left|\hat S_{ii,d,n}-S_{ii,d,n}\right|\le u\right\}.$$ We conclude by Theorem \ref{theorem: inequality Levy distance trace} and Theorem \ref{theorem: A43} that almost surely for sufficiently large $d$
\begin{align*}
&d_L\left(\mu^{\hat T_{d,n}},\mu^{\hat T_{d,n}-S_{d,n}+\hat S_{d,n}}\right)\\
&\hspace{1.5cm}\le d_L\left(\mu^{\hat T_{d,n}},\mu^{\hat T_{d,n}-\tilde S_{d,n}+\hat S_{d,n}}\right)+d_L\left(\mu^{\hat T_{d,n}-\tilde S_{d,n}+\hat S_{d,n}},\mu^{\hat T_{d,n}- S_{d,n}+\hat S_{d,n}}\right)\\
%&\hspace{1.5cm}\le \left\Arrowvert F^{\hat T_{d,n}}-F^{\hat T_{d,n}-\tilde S_{d,n}+\hat S_{d,n}}\right\Arrowvert+\left(\frac{1}{d}\tr\left(S_{d,n}-\tilde S_{d,n}\right)^2\right)^{1/3}\\
&\hspace{1.5cm}\le \frac{1}{d}\rank\left(\hat S_{d,n}-\tilde S_{d,n}\right)+\left(\frac{1}{d}\sum_{i=1}^d \left(S_{ii,d,n}-\tilde S_{ii,d,n}\right)^2\right)^{1/3}\\ 
&\hspace{1.5cm}\le \frac{1}{d}\sum_{i=1}^d\ind\left\{\left|\hat S_{ii,d,n}-S_{ii,d,n}\right|>u\right\}+u^{2/3}\\
&\hspace{1.5cm}\le 2u^{2/3}.
\end{align*}
Since the constant $u>0$ is chosen arbitrarily, we have
\begin{align*}
d_L\left(\mu^{\hat T_{d,n}},\mu^{\hat T_{d,n}-S_{d,n}+\hat S_{d,n}}\right)\overset{\text{a.s.}}{\longrightarrow}0
\end{align*}
for $d\to\infty$.
\subsection{Step VIII: Reverting the truncation}
Reverting finally the truncation steps I, III, IV yields the claim.

\medskip
\section{Proof of Proposition 6.1}\label{B}
\smallskip
Define $\hat X_{d,n}\in \R^{d\times n}$ by $\hat X_{ik,d,n}=X_{ik}\ind\{|X_{ik}|<\delta_{d,n} \sqrt{n}\}$. By Lemma 2.2 (truncation lemma)  of \cite{Bai1988} for $r=1/2$, given any preassigned decay rate to zero, there exists a sequence $(\delta_{d,n})$, $\delta_{d,n}\to 0$, with lower speed of convergence than that decay rate such that
$$\P\left(X_{d,n}\neq\hat X_{d,n} \ \text{infinitely often}\right)=0.$$
Let $(\delta_{d,n})$ be a sequence satisfying the truncation lemma with 
\begin{align}
\frac{1}{\sqrt{n}\delta_{d,n}^3}=o(1).\label{delta*}
\end{align} 
Therefore,
\begin{align*}
\limsup_{d\to \infty}\Bigg|&\left\Arrowvert\frac{1}{n}A_{d,n}\circ\left(\left(X_{d,n}\circ B_{d,n}\right)\left(X_{d,n}\circ B_{d,n}\right)^\ast\right)\right\Arrowvert_{S_\infty}\\
&-\left\Arrowvert\frac{1}{n}A_{d,n}\circ\left(\left(\hat X_{d,n}\circ B_{d,n}\right)\left(\hat X_{d,n}\circ B_{d,n}\right)^\ast\right)\right\Arrowvert_{S_\infty}\Bigg|= 0.
\end{align*}
Now let $\tilde X_{d,n}$ be the random matrix with entries $\tilde X_{ik,d,n}=\hat X_{ik,d,n}-\E \hat X_{ik,d,n}$. We prove
\begin{align*}
\limsup_{d\to \infty}\Bigg|&\left\Arrowvert\frac{1}{n}A_{d,n}\circ\left(\left(\tilde X_{d,n}\circ B_{d,n}\right)\left(\tilde X_{d,n}\circ B_{d,n}\right)^\ast\right)\right\Arrowvert_{S_\infty}\\
&-\left\Arrowvert\frac{1}{n}A_{d,n}\circ\left(\left(\hat X_{d,n}\circ B_{d,n}\right)\left(\hat X_{d,n}\circ B_{d,n}\right)^\ast\right)\right\Arrowvert_{S_\infty}\Bigg|= 0.
\end{align*}
As $\E X_{11}=0$, note first that
\begin{align}
\left|\E \hat X_{11,d,n}\right| &= \left\arrowvert \E X_{11}-\E X_{11}\ind\{\arrowvert X_{11}\arrowvert \geq\delta_n\sqrt{n}\}\right\arrowvert\nonumber\\\
&=\left\arrowvert \E X_{11}\ind\{\arrowvert X_{11}\arrowvert\geq\delta_n\sqrt{n}\}\right\arrowvert \nonumber\\
&\leq \E X_{11}^4 n^{-3/2}\delta_{d,n}^{-3}.\label{eq: bound EX}
\end{align}
Using the triangle inequality, the bound $\Arrowvert \cdot \Arrowvert_{S_\infty}\le \Arrowvert \cdot \Arrowvert_{S_2}$ as well as the inequality
$$\Arrowvert C \Arrowvert_{S_\infty}\le \max_{j=1,\dots,d}\sum_{i=1}^d |C_{ij}| \ \ \text{for symmetric }C\in\R^{d\times d}$$
in \eqref{eq: spectral1}, we conclude
%\marginpar{Aenderung $\sigma_{\max}$ zu $\lambda_{\max}$!}\marginpar{Aenderung der Reihenfolge $\E X_{11}$ und $B_{d,n}$ unten! Huete auf $X$ eingefuegt! $p$ in $d$ geaendert!}
\begin{align}
\Bigg|&\left\Arrowvert\frac{1}{n}A_{d,n}\circ\left(\left(\tilde X_{d,n}\circ B_{d,n}\right)\left(\tilde X_{d,n}\circ B_{d,n}\right)^\ast\right)\right\Arrowvert_{S_\infty}\notag\\
&\hspace{1.5cm}-\left\Arrowvert\frac{1}{n}A_{d,n}\circ\left(\left(\hat X_{d,n}\circ B_{d,n}\right)\left(\hat X_{d,n}\circ B_{d,n}\right)^\ast\right)\right\Arrowvert_{S_\infty}\Bigg|\notag\\
&\le \bigg\Arrowvert\frac{1}{n}A_{d,n}\circ\bigg(-\left(\hat{X}_{d,n}\circ B_{d,n}\right)\left(\E \hat{X}_{d,n}\circ B_{d,n}\right)^\ast-\left(B_{d,n}\circ\E \hat X_{d,n}\right)\notag\\
&\hspace{2cm}\times\left(\hat{X}_{d,n}\circ B_{d,n}\right)^\ast+\left(\E \hat{X}_{d,n}\circ B_{d,n}\right)\left(\E \hat{X}_{d,n}\circ B_{d,n} \right)^\ast\bigg)\bigg\Arrowvert_{S_\infty}\notag\\
&\le \frac{2}{n}\sqrt{\sum_{i,j=1}^dA_{ij,d,n}^2\left(\sum_{k=1}^n\hat X_{ik,d,n} B_{ik,d,n}B_{jk,d,n}\E \hat X_{jk}\right)^2}\label{eq: spectral1}\\
&\hspace{3.5cm}+d \max_{i,j}|A_{ij,d,n}| \left(\max_{ik}B_{ik,d,n}^2\right)\left(\E \hat X_{11,d,n}\right)^2\notag\\
&\le 2\sqrt\frac{d}{n}|\E \hat X_{11,d,n}|\max_{i,j} |A_{ij,d,n}|\left(\max_{ik}B_{ik,d,n}^2\right) \sqrt{d\max_{i=1,\dots,d}\sum_{k=1}^n{X}_{ik}^2}\label{eq: spectral2}\\
&\hspace{3.5cm}+d \max_{i,j}|A_{ij,d,n}| \left(\max_{ik}B_{ik,d,n}^2\right)\left(\E \hat X_{11,d,n}\right)^2\notag\\
&\longrightarrow 0 \ \ \text{a.s.},\notag
\end{align}
where the first summand in inequality \eqref{eq: spectral2} tends to $0$ by \eqref{eq: bound A B}, \eqref{delta*}, \eqref{eq: bound EX} and the Marcinkiewicz-Zygmund strong law of large numbers (cf. Lemma B.25 in \cite{Bai2010} with $\beta=1$ and $\alpha=3/4$). Since the entries of $\tilde X_{d,n}$ have all the same finite variance and
$\E \tilde X_{11,d,n}^2\rightarrow 1,$
we may assume for convergence statements about $$\left\Arrowvert\frac{1}{n}A_{d,n}\circ\left(\left(\tilde X_{d,n}\circ B_{d,n}\right)\left(\tilde X_{d,n}\circ B_{d,n}\right)^\ast\right)\right\Arrowvert_{S_\infty}$$ that the entries of $\tilde X_{d,n}$ to have unit variance. In order to apply the Lemma of Borel-Cantelli, we need to show that the probabilities
\begin{align*}
\P\left(\left\Arrowvert\frac{1}{n}A_{d,n}\circ\left(\left(\tilde X_{d,n}\circ B_{d,n}\right)\left(\tilde X_{d,n}\circ B_{d,n}\right)^\ast\right)\right\Arrowvert_{S_\infty}>z\alpha \right)
\end{align*}
are summable over $d\in \N$ for any $z>\left(1+\sqrt{y}\right)^2$. By Markov's inequality and because of $\Arrowvert S \Arrowvert_{\infty}^{2l}\le \tr\left(S^{2l}\right)$ for any symmetric matrix $S$ and $l\in\N$, it is sufficient to show that for any sequence $(l_{d,n})$  of even integers with $$l_{d,n}/\log n\to\infty \ \ \text{and} \ \ \delta_{d,n}^{1/6}l_{d,n}/\log n\to 0,$$ we get
\begin{align*}
m_{d,n,l_{d,n}}=\E  \tr\left[\ind_{E_{d,n}}\left(  \frac{1}{n}A_{d,n}\circ\left(\left(\tilde X_{d,n}\circ B_{d,n}\right)\left(\tilde X_{d,n}\circ B_{d,n}\right)^\ast\right)  \right)^{l_{d,n}}\right]\le (\alpha\eta)^{l_{d,n}}, 
\end{align*}
where $\left(1+\sqrt{y}\right)^2<\eta<z$ is an absolute constant and $E_{d,n}$ is the event
 $$E_{d,n}=\Big\{\max_{i,j} |A_{ij,d,n}|\Big(\max_{i,k}B_{ik,d,n}^2\Big)< \alpha\Big\}.$$
%Note that
%$$\E\left[\sigma_{\max}\left(\frac{1}{n}A_{d,n}\circ\left(\left(\tilde X_{d,n}\circ B_{d,n}\right)\left(\tilde X_{d,n}\circ B_{d,n}\right)^\ast\right)\right)^{l_{d,n}}\right]%\le m_{d,n,l_{d,n}}.$$
We have by independence of $\tilde X_{d,n}$ and $(A_{d,n},B_{d,n})$,
\begin{align*}
m_{d,n,l_{d,n}}&= n^{-l_{d,n}}\sum_{i_1,\dots,i_{l_{d,n}}=1}^d\sum_{k_1,\dots,k_{l_{d,n}}=1}^n\E\Big[ \ind_{E_{d,n}}A_{i_1i_2}A_{i_2i_3}\cdots A_{i_{l_{d,n}-1}i_{l_{d,n}}}A_{i_{l_{d,n}}i_1}\\
&\hspace{5cm}\times B_{i_1k_1}B_{i_2k_1}\cdots B_{i_{l_{d,n}}k_{l_{d,n}}}B_{i_{1}k_{l_{d,n}}}\Big]\\
&\hspace{4cm}\times \E\Big[ \tilde X_{i_1k_1}\tilde X_{i_2k_1}\cdots \tilde X_{i_{l_{d,n}}k_{l_{d,n}}}\tilde X_{i_{1}k_{l_{d,n}}} \Big]\\
&\le \alpha^{l_{d,n}}n^{-l_{d,n}}\sum_{i_1,\dots,i_{l_{d,n}}=1}^d\sum_{k_1,\dots,k_{l_{d,n}}=1}^n\left|\E\Big[ \tilde X_{i_1k_1}\tilde X_{i_2k_1}\cdots \tilde X_{i_{l_{d,n}}k_{l_{d,n}}}\tilde X_{i_{1}k_{l_{d,n}}} \Big]\right|\\
&\le \alpha^{l_{d,n}}\eta^{l_{d,n}},
\end{align*}
for $d$ sufficiently large in which case the inequality 
\begin{align*}
n^{-l_{d,n}}\sum_{i_1,\dots,i_{l_{d,n}}=1}^d\sum_{k_1,\dots,k_{l_{d,n}}=1}^n\left|\E\Big[ \tilde X_{i_1k_1}\tilde X_{i_2k_1}\cdots \tilde X_{i_{l_{d,n}}k_{l_{d,n}}}\tilde X_{i_{1}k_{l_{d,n}}} \Big]\right|\le \eta^{l_{d,n}}
\end{align*}
has been shown in the proof of Theorem 3.1 in \cite{Bai1988}.

\medskip
\section{Auxiliary results}\label{section: appendix}
\smallskip
\begin{lemma}\label{lemma4}[Lemma 4 in \cite{couillet2011}]
Let $A\in\C^{d\times d}$, $\tau\in\C$ and $r\in\R^d$ such that $A$ and $A+\tau rr^\ast$ are invertable. Then
\begin{equation}
r^\ast\left(A+\tau rr^\ast\right)^{-1}\ =\ \frac{1}{1+\tau r^\ast A^{-1}r}r^\ast A^{-1}.
\end{equation}
\end{lemma}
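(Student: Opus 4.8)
The plan is to reduce the claim to the elementary resolvent identity $A = (A+\tau rr^\ast) - \tau rr^\ast$ together with the hypothesis that both $A$ and $B:=A+\tau rr^\ast$ are invertible. First I would left-multiply this identity by $A^{-1}$ and right-multiply by $B^{-1}$, obtaining
\begin{equation*}
B^{-1} = A^{-1} - \tau A^{-1} r r^\ast B^{-1}.
\end{equation*}
Left-multiplying by $r^\ast$ and abbreviating $c := r^\ast A^{-1} r \in \C$ yields $r^\ast B^{-1} = r^\ast A^{-1} - \tau c\,(r^\ast B^{-1})$, hence
\begin{equation*}
(1+\tau c)\, r^\ast B^{-1} = r^\ast A^{-1}.
\end{equation*}

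Next I would verify that the scalar $1+\tau c = 1+\tau r^\ast A^{-1} r$ does not vanish, so that one may divide. Indeed, if $1+\tau c=0$, then the previous display forces $r^\ast A^{-1}=0$; since $A^{-1}$ is invertible this gives $r=0$, whence $c=0$ and $1+\tau c=1\neq 0$, a contradiction. Therefore $1+\tau c\neq 0$, and dividing the identity by it gives precisely
\begin{equation*}
r^\ast\left(A+\tau rr^\ast\right)^{-1} = \frac{1}{1+\tau r^\ast A^{-1} r}\, r^\ast A^{-1},
\end{equation*}
as claimed.

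I do not expect any genuine obstacle here: this is the standard Sherman--Morrison rank-one update computation, and the only point requiring a word of care is the non-vanishing of the scalar denominator, which is dispatched by the short contradiction argument above. As an alternative one could instead postulate the right-hand side and check, by multiplying on the right by $A+\tau rr^\ast$ and expanding, that the product equals $r^\ast$; but the derivation via $A = B - \tau rr^\ast$ is cleaner and needs no case distinction on whether $r=0$.
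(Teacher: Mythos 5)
Your derivation is correct and is the standard Sherman--Morrison computation; the paper does not supply its own proof of this lemma, but merely cites it as Lemma~4 of \cite{couillet2011}, so there is nothing to compare against. One small remark: the non-vanishing of $1+\tau r^\ast A^{-1}r$ can also be seen directly without the contradiction argument, since $\det(A+\tau rr^\ast)=\det(A)\,(1+\tau r^\ast A^{-1}r)$ by the matrix determinant lemma, and both determinants are nonzero by hypothesis; but your argument via $r^\ast A^{-1}=0\Rightarrow r=0$ is equally valid.
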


\begin{lemma}\label{lemma2.6}[Lemma 2.6 in \cite{Silverstein1995b}]
Let $z\in\C^+$, $A,B\in\C^{d\times d}$, $B$ Hermitian, $\tau\in\R$ and $q\in \C^d$. Then
\begin{equation}
\left\arrowvert \tr\left[\left(\left(B-zI_{d\times d}\right)^{-1}-\left(B+\tau qq^\ast-zI_{d\times d}\right)^{-1}\right)A\right]\right\arrowvert\ \leq\ \frac{\Arrowvert A\Arrowvert_{S_{\infty}}}{\Im z}. 
\end{equation}  
\end{lemma}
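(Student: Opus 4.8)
The plan is to turn the trace of the resolvent difference into a scalar quadratic form, apply the Sherman--Morrison identity of Lemma \ref{lemma4}, and then estimate that scalar by elementary bounds exploiting the Hermiticity of $B$.

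First I would discard the trivial case: if $q=0$ or $\tau=0$ then $B+\tau qq^\ast=B$ and the left-hand side vanishes, so assume $q\neq 0$, $\tau\neq 0$. Write $M=B-zI_{d\times d}$ and $M_\tau=B+\tau qq^\ast-zI_{d\times d}$; both are invertible since $B$ and $B+\tau qq^\ast$ are Hermitian and $\Im z>0$. From the resolvent identity $M^{-1}-M_\tau^{-1}=\tau M^{-1}qq^\ast M_\tau^{-1}$ and cyclicity of the trace,
$$\tr\big[(M^{-1}-M_\tau^{-1})A\big]=\tau\, q^\ast M_\tau^{-1}A M^{-1}q,$$
and Lemma \ref{lemma4}, applied with $M$ in the role of its matrix and $q$ in the role of its vector, rewrites $q^\ast M_\tau^{-1}=(1+\tau q^\ast M^{-1}q)^{-1}q^\ast M^{-1}$, so this equals $\frac{\tau}{1+\tau q^\ast M^{-1}q}\, q^\ast M^{-1}A M^{-1}q$.

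Then I would bound the factors. Cauchy--Schwarz and the definition of the spectral norm give $|q^\ast M^{-1}A M^{-1}q|\le\|(M^\ast)^{-1}q\|\,\|A\|_{S_\infty}\,\|M^{-1}q\|$, and a spectral decomposition $B=\sum_k\lambda_k e_k e_k^\ast$ with real $\lambda_k$ yields $\|(M^\ast)^{-1}q\|=\|M^{-1}q\|$ as well as $\Im\big(q^\ast M^{-1}q\big)=(\Im z)\,\|M^{-1}q\|^2$. The latter identity shows that $|1+\tau q^\ast M^{-1}q|\ge|\Im(\tau q^\ast M^{-1}q)|=|\tau|\,(\Im z)\,\|M^{-1}q\|^2$, and plugging this lower bound in cancels the $|\tau|\,\|M^{-1}q\|^2$ factors and leaves exactly $\|A\|_{S_\infty}/\Im z$.

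The only delicate point is this lower bound on $|1+\tau q^\ast M^{-1}q|$: for Hermitian $B$ and real $\tau$ the imaginary part of $\tau q^\ast M^{-1}q=\tau\,\tr\big(qq^\ast(B-zI_{d\times d})^{-1}\big)$ equals $|\tau|(\Im z)\|M^{-1}q\|^2$ and so never vanishes on $\C^+$, which is exactly where the Hermiticity hypothesis on $B$ is needed. It is also essential to keep $\|M^{-1}q\|$ symbolic rather than bounding it by $\|q\|/\Im z$, since the clean constant comes precisely from the cancellation of $\|M^{-1}q\|^2$ between numerator and denominator. The remaining verifications are routine linear algebra.
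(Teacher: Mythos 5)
Your proof is correct and is, in substance, the standard argument from the cited reference (Bai--Silverstein Lemma 2.6), which the paper itself does not reprove: reduce the trace of the resolvent difference to a scalar quadratic form via the rank-one (Sherman--Morrison) identity, then exploit the identity $\Im\big(q^\ast(B-zI)^{-1}q\big)=(\Im z)\|(B-zI)^{-1}q\|^2$, valid for Hermitian $B$, to make the factors $|\tau|\,\|(B-zI)^{-1}q\|^2$ cancel and leave the clean bound $\|A\|_{S_\infty}/\Im z$. Every step checks out, including the invertibility of $B+\tau qq^\ast - zI$ (Hermitian shift off the real axis), the equality $\|(M^\ast)^{-1}q\|=\|M^{-1}q\|$ (from $|\lambda_k-z|=|\lambda_k-\bar z|$), and the correct identification that Hermiticity of $B$ and reality of $\tau$ are precisely what make the imaginary-part lower bound on $|1+\tau q^\ast M^{-1}q|$ nonvanishing on $\C^+$.
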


\begin{lemma}\label{lemma8}[Lemma 8 in \cite{couillet2011}]
Let $C=A+iB+ivI_{d\times d}$, with $A,B\in\R^{d\times d}$ symmetric and $B$ positive semidefinite, $v>0$. Then
\begin{equation}
\left\Arrowvert C^{-1}\right\Arrowvert_{S_{\infty}}\leq v^{-1}.
\end{equation}
\end{lemma}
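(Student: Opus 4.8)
The plan is to prove invertibility of $C$ and the bound $\|C^{-1}\|_{S_\infty}\le v^{-1}$ simultaneously, by a single elementary estimate on the quadratic form $x^\ast C x$, exploiting that $B+vI_{d\times d}$ is positive definite with smallest eigenvalue at least $v$.

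First I would observe that, since $A$ and $B$ are real symmetric (hence Hermitian), for every $x\in\C^d$ the numbers $x^\ast A x$ and $x^\ast(B+vI_{d\times d})x$ are real, and consequently
\[
\Im\left(x^\ast C x\right)\ =\ x^\ast\left(B+vI_{d\times d}\right)x\ \ge\ v\,\|x\|_2^2,
\]
where the inequality follows from positive semidefiniteness of $B$. In particular, if $Cy=0$ then $0=\Im(y^\ast C y)\ge v\|y\|_2^2$, forcing $y=0$, so $C$ is invertible.

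For the norm bound I would then fix an arbitrary $x\in\C^d$ and set $y=C^{-1}x$, so that $x=Cy$; applying the Cauchy--Schwarz inequality together with the estimate above gives
\[
\|x\|_2\,\|y\|_2\ \ge\ \left|y^\ast x\right|\ =\ \left|y^\ast C y\right|\ \ge\ \left|\Im\left(y^\ast C y\right)\right|\ \ge\ v\,\|y\|_2^2,
\]
whence $\|C^{-1}x\|_2=\|y\|_2\le v^{-1}\|x\|_2$, and taking the supremum over unit vectors $x$ yields the claim. There is no genuine obstacle in this argument; the only point that must not be overlooked is that $\Im(x^\ast C x)$ equals \emph{precisely} $x^\ast(B+vI_{d\times d})x$ — which hinges on $x^\ast A x$ being real — so that it admits the uniform lower bound $v\|x\|_2^2$.
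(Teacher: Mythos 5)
Your proof is correct. The paper itself does not supply a proof of this lemma; it is quoted as Lemma 8 of \cite{couillet2011} in the appendix of auxiliary results. Your argument — using that $A$ and $B$ are real symmetric (hence Hermitian) to extract the lower bound $\Im(x^\ast C x)=x^\ast(B+vI_{d\times d})x \ge v\|x\|_2^2$, then combining Cauchy--Schwarz with the identity $y^\ast x = y^\ast C y$ for $y=C^{-1}x$ — is the standard elementary route to such resolvent bounds, and it correctly delivers both the invertibility of $C$ and the operator-norm estimate in a single step.
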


%\begin{lemma}\label{lemma5}
%Let $A\in\C^{d\times d}$ be Hermitian and $v\in\R^d$. Then
%$$
%\left\arrowvert v^\ast A v\right\arrowvert\ \leq\ \Arrowvert v\Arrowvert_2^2\Arrowvert A\Arrowvert_{S_{\infty}}.
%$$
%\end{lemma}
%
%\begin{proof}
%Let $A=U^H\Lambda U$ denote the spectral decomposition of $A$, with $U\in\C^{d\times d}$ a unitary matrix and $\Lambda\in\R^{d\times d}$ diagonal. Then
%\begin{align*}
%\arrowvert v^\ast Av\arrowvert\ &=\ \left\arrowvert v^\ast U\Lambda U^Hv\right\arrowvert\\
%&=\ \left\arrowvert \sum_{i=1}^d\lambda_i(U^\ast v)_i(U^\ast v)_i^*\right\arrowvert\\
%&\leq\ \max_i\arrowvert\lambda_i\arrowvert (U^\ast v)^\ast (U^Hv)\\
%&=\ \Arrowvert A\Arrowvert_{S_{\infty}}\Arrowvert v\Arrowvert_2^2.
%\end{align*}
%\end{proof}

%\begin{theorem}[\cite{Neumann1937}]\label{equation: von Neumann}
%Let $A,B\in \R^{d\times d}$. Then
%\begin{align*}
%\sum_{i=1}^ds_i(AB)\le \sum_{i=1}^ds_i(A)s_i(B).
%\end{align*}
%\end{theorem}

\begin{lemma}\label{lemma: moments}
Let $Z=(Z_1,...,Z_d)\in\R^d$ be a centered random vector with components bounded in absolute value by some constant $c>0$. Then for any $p\ge 1$,
\begin{align}
\E \left|\Arrowvert Z \Arrowvert_2^2-\E\Arrowvert Z \Arrowvert_2^2 \right|^{p}&\le C^p p^{p/2} d^{p/2},\\
\E \Arrowvert Z \Arrowvert_2^{2p}&\le C^p p^{p/2}d^p,
\end{align}
where the constant $C>0$ depends on $c$ only. 
\end{lemma}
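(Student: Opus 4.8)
The plan is to dispatch the two inequalities separately. The second is immediate from the boundedness hypothesis alone: $\|Z\|_2^2=\sum_{i=1}^d Z_i^2\le c^2 d$ almost surely, so $\E\|Z\|_2^{2p}\le (c^2 d)^p$, and since $p^{p/2}\ge 1$ for $p\ge 1$ this is at most $(c^2\vee 1)^p\,p^{p/2}d^p$, which is the claim with a constant depending on $c$ only. All the content is therefore in the first inequality, and for it I would use the independence of the coordinates $Z_1,\dots,Z_d$ (which is available in every application of this lemma in the paper, and without which the bound is false, see the end).

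Write $\|Z\|_2^2-\E\|Z\|_2^2=\sum_{i=1}^d\xi_i$ with $\xi_i:=Z_i^2-\E Z_i^2$. These are independent and centered, with $|\xi_i|\le c^2$ and $\E\xi_i^2=\Var(Z_i^2)\le\E Z_i^4\le c^2\,\E Z_i^2\le c^4$; hence $\sum_{i=1}^d\E\xi_i^2\le c^4 d$ and, crucially, $\sum_{i=1}^d\xi_i^2\le c^4 d$ deterministically. For $1\le p\le 2$, Lyapunov's inequality gives $\E|\sum_i\xi_i|^p\le(\sum_i\E\xi_i^2)^{p/2}\le(c^4 d)^{p/2}\le(c^2)^p p^{p/2}d^{p/2}$. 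For $p>2$, I would invoke the Burkholder-Davis-Gundy (Marcinkiewicz-Zygmund) inequality with its sharp $O(\sqrt p)$ constant and then bound the square function deterministically:
$$\E\Bigl|\sum_{i=1}^d\xi_i\Bigr|^p\le(C\sqrt p)^p\,\E\Bigl(\sum_{i=1}^d\xi_i^2\Bigr)^{p/2}\le(C\sqrt p)^p(c^4 d)^{p/2}=(C')^p\,p^{p/2}d^{p/2},$$
with $C'$ depending on $c$ only. From this one can also recover $\E\|Z\|_2^{2p}$ via $\|Z\|_2^2=(\|Z\|_2^2-\E\|Z\|_2^2)+\E\|Z\|_2^2$ and $\E\|Z\|_2^2\le c^2 d$, but the direct argument in the first paragraph is simpler.

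If one prefers not to quote the sharp Burkholder constant, a self-contained substitute is the method of moments: for an even integer $p=2m$, expand $\E(\sum_i\xi_i)^{2m}$, discard by independence and centering every index pattern in which some index occurs exactly once, bound the surviving one-index factors by $|\E\xi_i^k|\le c^{2(k-2)}\E\xi_i^2$ for $k\ge 2$, and count the admissible patterns; this yields $\E(\sum_i\xi_i)^{2m}\le(Cm)^m(c^4 d)^m$, and Lyapunov interpolation extends it to all real $p\ge 1$. For the finitely many fixed exponents actually needed in the proofs (e.g.\ $p=12$) even a plain Rosenthal inequality is enough, since $\sum_i\E|\xi_i|^p\le c^{2p}d\le c^{2p}d^{p/2}$ for $p\ge 2$. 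The only genuinely delicate point is obtaining the factor $p^{p/2}$, rather than $p^p$, uniformly in $p\ge 1$: a careless estimate of the ``heavy'' Rosenthal term is of order $p^p$ and overshoots once $p\gg d$, and what repairs it is precisely the almost sure bound $\sum_i\xi_i^2\le c^4 d$ (i.e.\ $\|Z\|_2^2\le c^2 d$) together with the $\sqrt p$ growth of the martingale constant. Independence of the coordinates is indispensable here: for a general centered bounded vector the first inequality already fails at $p=3$, as one sees by letting every $Z_i$ equal one and the same variable supported on $\{-c,0,c\}$ with a small mass at $\pm c$.
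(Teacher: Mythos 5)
Your proof is correct, and it makes a point the paper leaves implicit that is worth emphasizing: the lemma as stated has no independence hypothesis on the coordinates $Z_1,\dots,Z_d$, yet both the paper's proof and yours use it. The paper appeals to Vershynin's Lemma~5.9 (rotation invariance: $\|\sum_i X_i\|_{\psi_2}^2\lesssim\sum_i\|X_i\|_{\psi_2}^2$ for independent centered subgaussians), which requires independence; in every application of the lemma in the paper, $Z$ is a column of $Z_{d,n}$ and its entries are indeed independent. Your counterexample (all $Z_i$ equal to a single $\{-c,0,c\}$-valued variable) cleanly shows the first inequality must fail without independence, since then $\E\bigl|\|Z\|_2^2-\E\|Z\|_2^2\bigr|^p\sim d^p$ for $p>2$, while the bound is $O(d^{p/2})$.

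Your route differs from the paper's in its choice of machinery but lands in the same place. The paper writes $\frac1d(\|Z\|_2^2-\E\|Z\|_2^2)$ as a sum of independent centered bounded terms, applies the $\psi_2$-rotation-invariance lemma to get a $\psi_2$-norm of order $d^{-1/2}$, and reads off all moments via $\|X\|_{\psi_2}=\sup_p p^{-1/2}(\E|X|^p)^{1/p}$. You instead split $\|Z\|_2^2-\E\|Z\|_2^2=\sum_i\xi_i$, use Lyapunov for $1\le p\le2$, and for $p>2$ invoke the Marcinkiewicz--Zygmund/BDG inequality with its $O(\sqrt p)$ constant, then exploit the deterministic bound $\sum_i\xi_i^2\le c^4d$ on the square function. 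Both paths are ultimately Hoeffding-type subgaussian concentration for independent bounded summands; yours is a touch more self-contained (it reduces to Khintchine plus symmetrization), the paper's is more modular via the $\psi_2$-calculus. Your direct pointwise argument for the second inequality ($\|Z\|_2^2\le c^2d$ almost surely) is simpler than the paper's $\psi_2$-norm computation and is clearly correct. The remark that for the fixed exponents actually used in the proof (e.g.\ $p=12$) the $p^{p/2}$ sharpness is not needed and plain Rosenthal would do is also accurate.
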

\begin{proof}
The lemma is an easy consequence of Lemma 5.9 of \cite{Vershynin2011} together with the Definition 5.7 of the subgaussian norm  of \cite{Vershynin2011}, since
\begin{align*}
\left\Arrowvert\frac{1}{d}\big(\Arrowvert Z \Arrowvert_2^2- \E \Arrowvert Z \Arrowvert_2^2\big) \right\Arrowvert_{\psi_2}^2 &\le  \frac{\Delta}{d^2}\sum_{i=1}^d \Arrowvert Z_i^2-\E Z_i^2 \Arrowvert_{\psi_2}^2\\
&\le \frac{8\Delta }{d}c^4,
\end{align*}
where $\Delta$ corresponds to the absolute constant of Lemma 5.9 of \cite{Vershynin2011}, and
\begin{align*}
\left\Arrowvert\frac{1}{d}\Arrowvert Z \Arrowvert_2^2\right\Arrowvert_{\psi_2}^2&=\left\Arrowvert \frac{1}{d}\E \Arrowvert Z\Arrowvert_2^2+\frac{1}{d}\left(\Arrowvert Z \Arrowvert_2^2- \E \Arrowvert Z \Arrowvert_2^2\right)  \right\Arrowvert_{\psi_2}^2\\
&\le 2\left(\left\Arrowvert  \frac{1}{d}\E \Arrowvert Z\Arrowvert_2^2\right\Arrowvert_{\psi_2}^2+\left\Arrowvert\frac{1}{d}\big(\Arrowvert Z \Arrowvert_2^2- \E \Arrowvert Z \Arrowvert_2^2\big)  \right\Arrowvert_{\psi_2}^2\right)\\
%&=2\left(\left\Arrowvert \frac{1}{d}\sum_{i=1}^d\E Z_i^2  \right\Arrowvert_{\psi_2}^2+ \left\Arrowvert\frac{1}{d}\sum_{i=1}^d (Z_i^2-\E Z_i^2) \right\Arrowvert_{\psi_2}^2\right)\\
%&\le 2\left(c^4+ \frac{\Delta}{d^2}\sum_{i=1}^d \Arrowvert Z_i^2-\E Z_i^2 \Arrowvert_{\psi_2}^2\right)\\
&\le \left(2+\frac{16\Delta }{d}\right)c^4.
\end{align*}
\end{proof}
\begin{lemma}\label{lemma: eigenvalue}
Let $d/n<c_1$ and $Z_1,...,Z_n\in\R^d$ be a sample of i.i.d. random vectors with centered and independent components of variance $1$ and bounded in absolute value by some constant $c_2>0$. Denote the largest eigenvalue of the matrix $n^{-1}\sum_{k}Z_kZ_k^\ast $ by $\lambda_1$. Then for any $p\ge 1$,
\begin{align*}
\E \lambda_1^p\le C,
\end{align*}
where $C$ depends on $c_1,c_2$ and $p$ only.
\end{lemma}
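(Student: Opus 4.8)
The plan is to control a fixed moment of $\lambda_1$ by the trace of a high power of $M_{d,n}:=n^{-1}\sum_{k=1}^n Z_kZ_k^\ast$ and then let the power grow with the dimension. Since $M_{d,n}$ is positive semidefinite, $\lambda_1=\Arrowvert M_{d,n}\Arrowvert_{S_\infty}$ and $\lambda_1^{\ell}\le\tr(M_{d,n}^{\ell})$ for every positive integer $\ell$, so $\E\lambda_1^{\ell}\le\E\tr(M_{d,n}^{\ell})$. First I would dispose of small dimensions trivially: because $\Arrowvert Z_k\Arrowvert_2^2\le dc_2^2$ almost surely, one has $n^{-1}\sum_k(u^\ast Z_k)^2\le dc_2^2$ for any unit vector $u$, hence $\lambda_1\le dc_2^2$ deterministically; thus for any fixed threshold $d_0$ and all $d\le d_0$ the quantity $\E\lambda_1^p$ is bounded by $(d_0c_2^2)^p$.

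For large $d$ the key step is the classical graph-theoretic evaluation of
$$
\E\tr(M_{d,n}^{\ell})=n^{-\ell}\sum_{\substack{i_1,\dots,i_\ell\le d\\ k_1,\dots,k_\ell\le n}}\E\big[Z_{i_1k_1}Z_{i_2k_1}Z_{i_2k_2}Z_{i_3k_2}\cdots Z_{i_\ell k_\ell}Z_{i_1k_\ell}\big],
$$
the sum running over closed walks alternating between the $d$ row indices and the $n$ column indices. Using only $\E Z_{ik}=0$, $\E Z_{ik}^2=1$, $|Z_{ik}|\le c_2$ and $d/n<c_1$ --- exactly the ingredients of the upper-edge computation for the Mar\v{c}enko--Pastur law, cf.\ \cite{Bai2010} and the estimate carried out in Appendix~\ref{B} --- this expansion yields $\E\tr(M_{d,n}^{\ell})\le d\,(C_1(c_1,c_2))^{\ell}$ for all $\ell$ admissible in that estimate (say $\ell\le\sqrt n$), with $C_1$ depending only on $c_1$ and $c_2$. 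I would then set $\ell=\ell_d:=\lceil\log d\rceil$, which is admissible for $d$ large since $n>d/c_1$, so that $(\E\lambda_1^{\ell_d})^{1/\ell_d}\le d^{1/\ell_d}C_1\le eC_1$, i.e.\ $\E\lambda_1^{\ell_d}\le(eC_1)^{\ell_d}$. Finally, for every $d$ large enough that $\ell_d\ge p$, Jensen's inequality applied to the concave map $t\mapsto t^{p/\ell_d}$ gives $\E\lambda_1^p\le(\E\lambda_1^{\ell_d})^{p/\ell_d}\le(eC_1)^p$; combined with the small-$d$ bound of the first paragraph this yields the claim with $C=\max\{(d_0c_2^2)^p,(eC_1)^p\}$ for a suitable threshold $d_0$.

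The main obstacle is the trace estimate itself: one must check that the leading, Catalan/Narayana-type term of the walk expansion contributes at most a constant depending only on $c_1$ to the $\ell$-th root, and that the remaining terms --- handled through the bounded higher moments $\E|Z_{ik}|^m\le c_2^{m-2}$ --- do not inflate this constant when $\ell$ is allowed to grow like a multiple of $\log n$ (so that the $\ell$-dependent prefactors are killed by the $\ell$-th root). This is precisely the analysis underlying Theorem~3.1 of \cite{Bai1988} and the computation in Appendix~\ref{B}, specialized to the present already-bounded entries. An alternative route that avoids the combinatorics entirely would be to apply Corollary~5.50 of \cite{Vershynin2011} to the sub-gaussian vectors $Z_k$ (whose sub-gaussian norm is controlled by $c_2$), obtaining $\Arrowvert M_{d,n}-I_{d\times d}\Arrowvert_{S_\infty}\le C_2(c_1,c_2)+t/\sqrt n$ with probability at least $1-2e^{-ct^2}$, and then integrating this tail to bound $\E\lambda_1^p$ directly and uniformly in $(d,n)$.
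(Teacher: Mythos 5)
Your combinatorial route is correct: the bound $\lambda_1^\ell\le\tr(M_{d,n}^\ell)$ together with the Bai--Yin walk expansion (valid here because boundedness gives $\E|Z_{ik}|^m\le c_2^{m-2}$, so the fourth-moment hypothesis of \cite{Bai1988} is automatic) yields $\E\tr(M_{d,n}^\ell)\le d\,C_1^\ell$ for $\ell$ up to $\lceil\log d\rceil$, the $L^q$-norm monotonicity step passes this to any fixed $p$, and the small-$d$ regime is covered deterministically by $\lambda_1\le d c_2^2$. The paper, however, proves the lemma via exactly the ``alternative route'' you sketch at the end: it applies a nonasymptotic concentration bound for the extreme singular values of a matrix with independent sub-gaussian isotropic rows --- Theorem 5.39, inequality (5.22), of \cite{Vershynin2011}, rather than the column Corollary 5.50 you name, which additionally requires $\Arrowvert Z_k\Arrowvert_2=\sqrt{d}$ a.s.\ and therefore does not directly apply here --- to obtain a sub-Gaussian tail for $s_1=\sqrt{\lambda_1}$ at a deterministic location $\sqrt{c_1}+\Delta_1$, and then simply integrates this tail to bound $\E\lambda_1^p=\E s_1^{2p}$ uniformly in $d,n$. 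That argument is essentially a two-line calculation and requires no growing moment order $\ell_d$ or combinatorial bookkeeping, at the cost of invoking a prepackaged random-matrix inequality; your trace-moment route is more self-contained and makes explicit how boundedness substitutes for the classical fourth-moment condition, which is the same mechanism at work in Appendix~\ref{B}. Both are valid; the paper's is substantially shorter.
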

\begin{proof}
Since $$\frac{1}{n}\sum_{k=1}^nZ_kZ_k^\ast =\frac{1}{n}ZZ^\ast ,$$ where the $k$-th column of the matrix $Z\in\R^{d\times n}$ is given by $Z_k$, $\lambda_1=s_1^2$ with $s_1$ the largest singular value of $n^{-1/2}Z$. Dividing the right-hand side of inequality (5.22) of \cite{Vershynin2011} by $\sqrt{n}$ yields
\begin{align*}
s_{1}\le \sqrt{c_1}+\Delta_1+\frac{t}{\sqrt n}
\end{align*}
with probability at least $1-2\exp(-\Delta_2t^2)$ for some constant $\Delta_1,\Delta_2>0$ depending on $c_2$ only. Therefore,
\begin{align*}
\E \lambda_1^p&=\E s_1^{2p}\\
&=\int_0^\infty x^{2p} \P(s_1>x) \d x\\
&\le (\sqrt{c_1}+\Delta_1)^{2p}+2\int_{\sqrt{c_1}+\Delta_1}^\infty x^{2p}\exp\left(-\Delta_2n(x-(\sqrt{c_1}+\Delta_1))^2\right) \d x\\
&\le (\sqrt{c_1}+\Delta_1)^{2p}+2\int_{0}^\infty (x+\sqrt{c_1}+\Delta_1)^{2p}\exp\left(-\Delta_2nx^2\right) \d x\\
&\le C,
%&\le (\sqrt{c_1}+\Delta_1)^{2p}+4\sqrt{\pi}\left(\frac{2\Delta_2}{n}\right)^{p-1/2}\frac{\Gamma((1+2p)/2)}{\Gamma(1/2)}.
\end{align*}
where $C$ can be chosen independently of $n$.
\end{proof}

\begin{lemma}\label{lemma7}
Let $U_1,...,U_d$ iid random $\C$-valued random variables with $\E U_i=0$, $\E\arrowvert U_i\arrowvert^2=1$, $\arrowvert U_i\arrowvert\leq C$ for some constant $C>0$ and $A\in\C^{d\times d}$. Denote $U=(U_1,...,U_d)^\ast $. Then
$$
\E\left\arrowvert  U^\ast AU-\tr A\right\arrowvert^6\ \leq\ c\Arrowvert A\Arrowvert_{S_{\infty}}^6d^3C^{12}
$$
with a constant $c>0$ which does not depend on $d$, $A$ and the distribution of $U_i$.
\end{lemma}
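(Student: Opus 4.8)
The plan is to split the centered quadratic form into its diagonal and off-diagonal parts,
\[
U^\ast A U - \tr A \;=\; \underbrace{\sum_{i\ne j}\bar U_i A_{ij} U_j}_{=:Q_1} \;+\; \underbrace{\sum_{i=1}^d A_{ii}\bigl(|U_i|^2-1\bigr)}_{=:Q_2},
\]
and, since $\E|Q_1+Q_2|^6 \le 32\,(\E|Q_1|^6+\E|Q_2|^6)$ by convexity, to bound each term separately by a constant multiple of $\Arrowvert A\Arrowvert_{S_\infty}^6\,d^3\,C^{12}$. Throughout one uses the elementary bounds $\E|U_1|^{2\ell}\le C^{2\ell-2}\,\E|U_1|^2=C^{2\ell-2}\le C^{2\ell}$ (so in particular $C\ge 1$), $|A_{ii}|\le\Arrowvert A\Arrowvert_{S_\infty}$, and $\tr\bigl((AA^\ast)^{\ell}\bigr)=\sum_i\lambda_i(AA^\ast)^{\ell}\le d\,\Arrowvert A\Arrowvert_{S_\infty}^{2\ell}$.

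First I would treat $Q_2$, which is a sum of $d$ independent centered random variables with $|A_{ii}(|U_i|^2-1)|\le 2C^2\Arrowvert A\Arrowvert_{S_\infty}$ and $\E(|U_i|^2-1)^2\le\E|U_i|^4\le C^4$. Rosenthal's inequality for the sixth moment then gives
\[
\E|Q_2|^6 \;\le\; c\Bigl(\bigl(\textstyle\sum_i |A_{ii}|^2\,\E|U_i|^4\bigr)^3 + \textstyle\sum_i |A_{ii}|^6\,\E\bigl||U_i|^2-1\bigr|^6\Bigr)\;\le\; c'\bigl(C^{12}d^3+C^{12}d\bigr)\Arrowvert A\Arrowvert_{S_\infty}^6,
\]
which is of the required order.

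The main work is the off-diagonal term $Q_1$. I would expand $\E|Q_1|^6=\E\bigl[Q_1^3\,\overline{Q_1}^{\,3}\bigr]$ as a sum over twelve index pairs $(i_k,j_k)$, $k=1,\dots,6$, with $i_k\ne j_k$, and exploit independence together with $\E U_i=0$: a configuration contributes nothing unless every index value occurring among the twelve slots $\{i_1,j_1,\dots,i_6,j_6\}$ occurs at least twice, so at most six distinct values occur. Grouping configurations by their partition type, the dominant contribution comes from six distinct values each appearing exactly twice, i.e.\ a perfect matching of the twelve slots; summing the associated products $\bigl|\prod_{k\le 3}A_{i_kj_k}\prod_{4\le k\le 6}\bar A_{i_kj_k}\bigr|$ along the resulting chains and cycles produces factors of the form $\tr(AA^\ast)$, $\tr(AA^{\!\top})$ and their mixtures, each at most $d\,\Arrowvert A\Arrowvert_{S_\infty}^2$, so this part is $\lesssim C^{12}\,d^3\,\Arrowvert A\Arrowvert_{S_\infty}^6$. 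Configurations with fewer distinct values pile more $U$-factors on a single index — still bounded by $\E|U_1|^{12}\le C^{12}$ — but have correspondingly fewer free summation indices, hence a strictly smaller power of $d$, and a term-by-term count shows they are all dominated by the above. (Alternatively, one may quote the standard quadratic-form moment inequality $\E|U^\ast AU-\tr A|^p\le C_p\bigl((\E|U_1|^4)^{p/2}(\tr AA^\ast)^{p/2}+\E|U_1|^{2p}\,\tr(AA^\ast)^{p/2}\bigr)$, specialise to $p=6$, and insert the trace bounds.) Combining the bounds on $\E|Q_1|^6$ and $\E|Q_2|^6$ yields the claim. The step I expect to be the main obstacle is precisely the combinatorial bookkeeping for $\E|Q_1|^6$: one must check carefully, using the constraint $i_k\ne j_k$ and the exact list of traces that can arise, that no admissible index configuration produces an extra factor of $d$ beyond $d^3$.
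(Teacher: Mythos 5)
Your proposal is correct and in substance identical to the paper's: the paper simply adapts Lemma~3.1 of Bai--Silverstein (the standard quadratic-form moment inequality) by replacing the $\log$--bound on the entries with $C$, which is exactly the inequality you quote in your parenthetical and whose proof your diagonal/off-diagonal decomposition with the combinatorial index-matching argument re-derives. The trace bounds $\tr(AA^\ast)\le d\Arrowvert A\Arrowvert_{S_\infty}^2$ and $\tr((AA^\ast)^3)\le d\Arrowvert A\Arrowvert_{S_\infty}^6$ together with $\E|U_1|^4\le C^2$, $\E|U_1|^{12}\le C^{12}$ and $C\ge 1$ (from $1=\E|U_1|^2\le C^2$) then give the stated bound.
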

\begin{proof}
The proof follows the lines of Lemma 3.1 in \cite{Silverstein1995b} by replacing the logarithmic bound on the entries of $U$ with $C$.
\end{proof}

\begin{lemma}\label{lemma: log d}
For $d\in \N$ and $n=n_d\in \N$ with $\limsup_{d}d/n\leq c_1<\infty$ let $X_{1,d},\dots,X_{n,d}$ be i.i.d. $d$-dimensional, centered random vectors with variance $1$ such that 
$$\limsup_{d\to\infty}\max_{i=1,\dots,d}\max_{k=1,\dots,n} |X_{i,k,d}|\le c_2$$ almost surely and $R_d\in \R^{d\times d}$ be a  positive definite diagonal matrix with 
$$\limsup_{d\to\infty}\max_{i=1,\dots,d} |R_{i,i,d}|\le c_3.$$ Then,
\begin{equation}\label{c prime}
\underset{d\rightarrow\infty}{\lim\sup}\,\lambda_{\max}\left(\frac{1}{n}\sum_{k=1}^nR_d^{1/2}X_{k,d}X_{k,d}^\ast R_d^{1/2}\right)\ < c \ \ a.s.
\end{equation}
for some constant $c>0$ depending on $c_1,c_2$ and $c_3$ only.
\end{lemma}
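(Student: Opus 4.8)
The plan is to strip off the diagonal factor $R_d$, reduce to a sample covariance matrix whose columns have deterministically bounded coordinates, and then combine a dimension-free deviation bound for its largest eigenvalue with the Borel--Cantelli lemma. Write $X_d=(X_{1,d},\dots,X_{n,d})\in\R^{d\times n}$. Since $R_d$ is diagonal and positive definite and $\tfrac1n X_dX_d^\ast$ is symmetric positive semidefinite,
$$\lambda_{\max}\!\Big(\tfrac1n R_d^{1/2}X_dX_d^\ast R_d^{1/2}\Big)\le\|R_d\|_{S_\infty}\,\lambda_{\max}\!\Big(\tfrac1n X_dX_d^\ast\Big)=\Big(\max_{i=1,\dots,d}R_{i,i,d}\Big)\,\lambda_{\max}\!\Big(\tfrac1n X_dX_d^\ast\Big).$$
Because $\limsup_d\max_i R_{i,i,d}\le c_3$ almost surely, it therefore suffices to exhibit a constant $c_0=c_0(c_1,c_2)$ with $\limsup_d\lambda_{\max}(\tfrac1n X_dX_d^\ast)\le c_0$ almost surely; the claim then follows with, say, $c:=c_0c_3+1$.

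Next I truncate. Put $\bar X_{i,k,d}=X_{i,k,d}\ind\{|X_{i,k,d}|\le c_2+1\}$ and $\bar X_d=(\bar X_{i,k,d})$. The hypothesis $\limsup_d\max_{i,k}|X_{i,k,d}|\le c_2$ produces a probability one event on which $X_d=\bar X_d$ for all $d$ large enough, so that $\lambda_{\max}(\tfrac1n X_dX_d^\ast)=\lambda_{\max}(\tfrac1n\bar X_d\bar X_d^\ast)$ for all large $d$ almost surely. The entries of $\bar X_d$ are bounded by $c_2+1$ and independent (across coordinates and columns), and, writing $\mu^{(d)}=(\E\bar X_{i,1,d})_{i=1,\dots,d}$, one has by Cauchy--Schwarz and $\E X_{i,1,d}^2=1$
$$\|\mu^{(d)}\|_2^2\le\sum_{i=1}^d\P\!\big(|X_{i,1,d}|>c_2+1\big)\le\sum_{i,k}\P\!\big(|X_{i,k,d}|>c_2+1\big)\le 2\,\P\!\big(\max_{i,k}|X_{i,k,d}|>c_2+1\big)\longrightarrow 0,$$
the last-but-one step using $1-\prod_{i,k}(1-p_{i,k})\ge\tfrac12\sum_{i,k}p_{i,k}$ (valid once $\sum_{i,k}p_{i,k}\le1$, which holds eventually since the displayed probability tends to $0$). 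In particular $\|\E[\tfrac1n\bar X_d\bar X_d^\ast]\|_{S_\infty}\le 1+2\|\mu^{(d)}\|_2^2$ is bounded for $d$ large.

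It remains to control $\limsup_d\lambda_{\max}(\tfrac1n\bar X_d\bar X_d^\ast)$ for the array $\bar X_d$ with independent coordinates bounded by $c_2+1$. For a fixed unit vector $v$ the quadratic form $\tfrac1n\sum_{k=1}^n\langle\bar X_{k,d},v\rangle^2$ is an average of $n$ i.i.d.\ sub-exponential variables whose $\psi_1$-norm is controlled by $c_2$ alone, so Bernstein's inequality, a union bound over a $\tfrac12$-net of $S^{d-1}$ (of cardinality at most $5^d$), and the bound $\|A-\E A\|_{S_\infty}\le 2\max_v|\langle(A-\E A)v,v\rangle|$ for symmetric $A$ give, for a suitably large deviation level,
$$\lambda_{\max}\!\Big(\tfrac1n\bar X_d\bar X_d^\ast\Big)\le\Big\|\E\big[\tfrac1n\bar X_d\bar X_d^\ast\big]\Big\|_{S_\infty}+C(c_1,c_2)\le c_0$$
with probability at least $1-2\cdot 5^{-d}$, where $C$ and $c_0$ depend on $c_1,c_2$ only. (Equivalently, in the already centred isotropic case one may simply invoke inequality (5.22)/Corollary 5.50 of \cite{Vershynin2011} with deviation parameter $t=\sqrt n$, exactly as in the proof of Lemma \ref{lemma: eigenvalue}.) Since $\sum_d 5^{-d}<\infty$, the Borel--Cantelli lemma yields $\limsup_d\lambda_{\max}(\tfrac1n\bar X_d\bar X_d^\ast)\le c_0$ almost surely, and retracing the first two reductions establishes \eqref{c prime}.

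I expect the only delicate point to be the truncation step: one has to verify that truncating at level $c_2+1$ genuinely reproduces the original matrix for all large $d$ --- this is precisely where the almost-sure asymptotic entry bound enters --- and that the accompanying loss of centering is asymptotically irrelevant, which in turn rests on the elementary estimate $\|\mu^{(d)}\|_2^2\le 2\,\P(\max_{i,k}|X_{i,k,d}|>c_2+1)\to 0$. The concentration bound in the last step is entirely standard; moreover in the situations where this lemma is actually applied (e.g.\ to Rademacher columns) the columns are already deterministically bounded, centred and isotropic, so the truncation step is vacuous and only the final estimate is needed.
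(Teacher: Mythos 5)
Your core argument is the same as the paper's: reduce to a bounded sub-gaussian sample covariance matrix, apply the standard spectral concentration bound (Theorem 5.39/Corollary 5.50 of Vershynin), and invoke Borel--Cantelli. The paper applies this directly to the columns $R_d^{1/2}X_{k,d}$ with deviation parameter $t=\log d$ (failure probability $2\exp(-C'(\log d)^2)$, which is summable); you first strip $\|R_d\|_{S_\infty}$ via $\lambda_{\max}(R^{1/2}MR^{1/2})\le\|R\|_{S_\infty}\lambda_{\max}(M)$ and take $t=\sqrt n$. Both routes are fine.

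The genuine weak point is the truncation/recentering step. To show the recentering is negligible you assert $\|\mu^{(d)}\|_2^2\le 2\,\P(\max_{i,k}|X_{i,k,d}|>c_2+1)\to 0$. Two issues: (i) the inequality rests on $1-\prod_{i,k}(1-p_{i,k})\ge\tfrac12\sum_{i,k}p_{i,k}$, which requires that the indicator events be independent across all $(i,k)$; independence of the coordinates \emph{within a column} is used throughout the paper's applications but is not part of the lemma's hypotheses as written. (ii) More seriously, $\P(\max_{i,k}|X_{i,k,d}|>c_2+1)\to 0$ does \emph{not} follow from the a.s. statement $\limsup_d\max_{i,k}|X_{i,k,d}|\le c_2$ when the arrays for different $d$ are not nested: $\P(A_d\text{ i.o.})=0$ does not force $\P(A_d)\to 0$ (disjoint events of probability $1/2$ are a counterexample). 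The justification you give ("which holds eventually since the displayed probability tends to $0$") is therefore circular. To be fair, the paper's own one-line proof also glosses over this by simply declaring the variables ``uniformly bounded''; the lemma is applied only to arrays that are deterministically truncated (and, in particular, deterministically bounded), so in context nothing breaks. But if you introduce a truncation you must either carry the deterministic bound through (as the paper effectively does) or produce a bound on $\sum_i\P(|X_{i,1,d}|>c_2+1)$ from some assumption strong enough to deliver it, which the stated a.s. $\limsup$ alone is not.
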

\begin{proof}
Since the random variables are uniformly bounded which implies uniform subgaussian tails, Theorem 5.39 of \cite{Vershynin2011} applies. The particular choice $t=\log d$ yields 
\begin{align*}
\lambda_{\max}\left(\frac{1}{n}\sum_{k=1}^nR_d^{1/2}X_{k,d}X_{k,d}^\ast R_d^{1/2}\right)\le \frac{d}{n}+C+\frac{(\log d)^2}{n}
\end{align*}
with probability at least $1-2\exp(-C'(\log d)^2)$ for two positive constants $C,C' $ which depend only on $c_1$ and $c_2$. Hence, the claim follows by the Lemma of Borel-Cantelli. 
\end{proof}
\begin{theorem}[Theorem A.43 \cite{Bai2010}]\label{theorem: A43}
Let $A$ and $B$ be two $d\times d$ Hermitian matrices. Then,
\begin{equation}
d_K\left(\mu^A,\mu^B\right)\le \frac{1}{d}\rank(A-B),
\end{equation}
where $\mu^A$ and $\mu^B$ denote the spectral distributions of $A$ and $B$, respectively. 
\end{theorem}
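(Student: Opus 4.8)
The plan is to establish the pointwise estimate
$$\bigl|\mu^A((-\infty,x])-\mu^B((-\infty,x])\bigr|\ \le\ \frac{1}{d}\rank(A-B)\qquad\text{for all }x\in\R,$$
and then pass to the supremum over $x$ to read off the bound on $d_K$. Set $r=\rank(A-B)$; since $d_K\le 1$ trivially, we may assume $r<d$. For a Hermitian $M\in\C^{d\times d}$ write $N_M(x)=\#\{i:\lambda_i(M)\le x\}=d\,\mu^M((-\infty,x])$ for the eigenvalue counting function, so that the goal reduces to showing $|N_A(x)-N_B(x)|\le r$ for every $x$.

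First I would split the perturbation into rank-one pieces. As $A-B$ is Hermitian of rank $r$, write $A-B=\sum_{j=1}^r\mu_j u_j u_j^\ast$ with real $\mu_j\neq 0$ and orthonormal $u_1,\dots,u_r$, and put $B_0=B$, $B_j=B_{j-1}+\mu_j u_j u_j^\ast$, so $B_r=A$. For a single Hermitian rank-one update $M'=M+\mu uu^\ast$ with $\mu>0$, the classical interlacing inequality gives $M\preceq M'$, hence $\lambda_k(M')\ge\lambda_k(M)$ for all $k$ and thus $N_{M'}(x)\le N_M(x)$; and it also gives $\lambda_{k+1}(M')\le\lambda_k(M)$ for all $k$ (with the conventions $\lambda_j=+\infty$ for $j\le0$ and $\lambda_j=-\infty$ for $j>d$), hence $N_{M'}(x)\ge N_M(x)-1$. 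The case $\mu<0$ reduces to this by writing $M=M'+|\mu|uu^\ast$. So $|N_{M'}(x)-N_M(x)|\le 1$ for every $x$, and telescoping along $B_0,\dots,B_r$ yields $|N_A(x)-N_B(x)|\le\sum_{j=1}^r|N_{B_j}(x)-N_{B_{j-1}}(x)|\le r$, which is exactly what is needed.

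An equivalent route that bypasses the rank-one decomposition would be to invoke the Courant--Fischer min--max formula and restrict the competing $k$-dimensional subspaces to lie inside $\ker(A-B)=\operatorname{range}(A-B)^\perp$, a subspace of dimension at least $d-r$ on which $A$ and $B$ induce the same quadratic form; this yields $\lambda_{k+r}(A)\le\lambda_k(B)\le\lambda_{k-r}(A)$ for all admissible $k$, from which the same counting-function comparison follows. Either way the argument is elementary, and there is no genuine obstacle: the only point demanding care is the bookkeeping of eigenvalue indices at the two ends of the spectrum, which is why the $\pm\infty$ conventions above are imposed. This reproduces Theorem A.43 of \cite{Bai2010}.
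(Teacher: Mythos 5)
Your argument is correct. The paper itself does not supply a proof of this statement: it is quoted verbatim as Theorem A.43 from Bai and Silverstein (2010) and used as a black-box auxiliary result, so there is no internal proof to compare against. What you give is the standard proof of that theorem, and indeed essentially the one in Bai and Silverstein: reduce to the eigenvalue counting functions $N_A,N_B$, split the Hermitian rank-$r$ difference into $r$ Hermitian rank-one updates, apply the Weyl/Cauchy interlacing inequalities $\lambda_k(M)\le\lambda_k(M+\mu uu^\ast)\le\lambda_{k-1}(M)$ for $\mu>0$ to get $|N_{M'}(x)-N_M(x)|\le1$ per step, and telescope. The Courant--Fischer alternative you sketch, restricting test subspaces to $\ker(A-B)$, is an equally standard and equivalent route. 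Both are sound; there is nothing missing.
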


\begin{theorem}[Corollary A.41 from \cite{Bai2010}]\label{theorem: inequality Levy distance trace}
Let $A$ and $B$ be two $d\times d$ Hermitian matrices with spectral distribution $\mu^A$ and $\mu^B$. Then,
\begin{equation}
d_L^3\left(\mu^A,\mu^B\right)\le \frac{1}{d}\tr\big((A-B)(A-B)^\ast \big).
\end{equation}
\end{theorem}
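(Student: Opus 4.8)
The plan is to derive the inequality in two steps: a one‑dimensional estimate bounding the cube of the L\'evy distance between two empirical measures by the squared $\ell^2$‑distance between their sorted atoms, followed by the classical Hoffman--Wielandt inequality, which controls that $\ell^2$‑distance by the Frobenius norm of the matrix difference.

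Write $a_1\ge a_2\ge\cdots\ge a_d$ and $b_1\ge b_2\ge\cdots\ge b_d$ for the eigenvalues of $A$ and $B$, so that $\mu^A=\frac1d\sum_{i=1}^d\delta_{a_i}$ and $\mu^B=\frac1d\sum_{i=1}^d\delta_{b_i}$, with distribution functions $F_A$ and $F_B$. First I would establish
$$
d_L^3\bigl(\mu^A,\mu^B\bigr)\ \le\ \frac1d\sum_{i=1}^d (a_i-b_i)^2 .
$$
Set $\varepsilon:=\bigl(\tfrac1d\sum_{i}(a_i-b_i)^2\bigr)^{1/3}$, the case $\varepsilon=0$ being trivial (then $\mu^A=\mu^B$). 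Recalling the definition of $d_L$, it suffices to verify $F_A(x)\le F_B(x+\varepsilon)+\varepsilon$ for every $x\in\R$; the complementary inequality follows by interchanging $A$ and $B$, and the two together yield $d_L(\mu^A,\mu^B)\le\varepsilon$. Since the $a_i$'s and the $b_i$'s are each arranged in decreasing order, $\{i:a_i\le x\}$ and $\{i:b_i\le x+\varepsilon\}$ consist of the largest indices, which gives
$$
F_A(x)-F_B(x+\varepsilon)\ \le\ \frac1d\,\#\bigl\{i:\ a_i\le x<x+\varepsilon<b_i\bigr\}.
$$
Every index $i$ counted here satisfies $b_i-a_i>\varepsilon$, hence $(a_i-b_i)^2>\varepsilon^2$; since $\sum_i(a_i-b_i)^2=d\varepsilon^3$, there are fewer than $d\varepsilon$ such indices, so $F_A(x)-F_B(x+\varepsilon)<\varepsilon$ for all $x$.

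The second step is to invoke the Hoffman--Wielandt inequality for Hermitian matrices,
$$
\sum_{i=1}^d (a_i-b_i)^2\ \le\ \tr\bigl((A-B)(A-B)^\ast\bigr)\ =\ \Arrowvert A-B\Arrowvert_{S_2}^2 ,
$$
using that $A-B$ is Hermitian, and then to combine it with the first step to conclude. I expect the only nonroutine ingredient to be the Hoffman--Wielandt bound itself; it is classical, and can also be deduced from the Lidskii--Wielandt majorization already used elsewhere in the paper by applying the convex increasing function $t\mapsto t^2$ to the weakly majorized vector of eigenvalue gaps. The block‑counting in the first step is elementary, so apart from carrying out that bookkeeping carefully I do not anticipate a real obstacle.
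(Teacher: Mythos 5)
Your proof is correct. The paper does not actually prove this statement---it is quoted verbatim as Corollary A.41 from Bai and Silverstein (2010)---so there is no in-paper argument to compare against, but your reconstruction is exactly the standard one: the one-dimensional counting argument you give is Theorem A.38 of the same book with $\alpha=2$ (specialized to the pairing of decreasingly sorted eigenvalues, which realizes the minimum over permutations), and the second ingredient is the Hoffman--Wielandt inequality for Hermitian matrices. Both steps are carried out correctly, including the slightly delicate point that the indices counted in the set $\{i: a_i\le x,\ b_i>x+\varepsilon\}$ each contribute strictly more than $\varepsilon^2$ to $\sum_i(a_i-b_i)^2=d\varepsilon^3$, bounding their number by $d\varepsilon$.
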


\begin{theorem}[Theorem A. 38 \cite{Bai2010}]\label{theorem: A38}
Let $\lambda_1,\dots,\lambda_d$ and $\delta_1,\dots,\delta_d$ be two families of real numbers and their empirical distributions be denoted by $\mu$ and $\bar \mu$. Then, for any $\alpha>0$, we have 
\begin{equation}
d_L^{\alpha+1}(\mu,\bar \mu)\le \min_{\pi}\frac{1}{d}\sum_{k=1}^d |\lambda_k-\delta_{\pi(k)}|^\alpha,
\end{equation}
where the minimum is running over all permutations $\pi$ on $\{1,\dots, d\}$.
\end{theorem}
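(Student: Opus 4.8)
The plan is to establish the bound for each individual permutation $\pi$ and then pass to the minimum over the finitely many permutations. Since the empirical distribution $\bar\mu$ of $\delta_1,\dots,\delta_d$ is unchanged under relabelling, it suffices to treat $\pi=\mathrm{id}$ and prove
\[
d_L^{\alpha+1}(\mu,\bar\mu)\ \le\ \frac1d\sum_{k=1}^d|\lambda_k-\delta_k|^\alpha .
\]
Set $c:=\bigl(\tfrac1d\sum_{k=1}^d|\lambda_k-\delta_k|^\alpha\bigr)^{1/(\alpha+1)}$. If $c=0$ then $\lambda_k=\delta_k$ for all $k$, hence $\mu=\bar\mu$ and there is nothing to prove, so assume $c>0$. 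Writing $F_\nu(t)=\nu((-\infty,t])$, by the definition of the L\'evy metric recalled in the preliminaries it is enough to show that $c$ lies in the admissible set, i.e.\ that $F_\mu(x-c)-c\le F_{\bar\mu}(x)\le F_\mu(x+c)+c$ for every $x\in\R$.

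First I would prove the right-hand inequality by a counting argument. Fix $x\in\R$, put $K=\{k:\delta_k\le x\}$, so that $F_{\bar\mu}(x)=\#K/d$, and let $B=\{k\in K:\lambda_k>x+c\}$. For every $k\in B$ one has $\lambda_k-\delta_k>(x+c)-x=c>0$, hence $|\lambda_k-\delta_k|^\alpha>c^\alpha$, and therefore $(\#B)\,c^\alpha<\sum_{k=1}^d|\lambda_k-\delta_k|^\alpha=d\,c^{\alpha+1}$, which forces $\#B<dc$. Since $K\setminus B\subseteq\{k:\lambda_k\le x+c\}$, this gives $\#\{k:\lambda_k\le x+c\}\ge\#K-\#B>\#K-dc$, i.e.\ $F_\mu(x+c)>F_{\bar\mu}(x)-c$, hence $F_{\bar\mu}(x)\le F_\mu(x+c)+c$. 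The left-hand inequality follows by the symmetric argument with the two families interchanged: with $K'=\{k:\lambda_k\le x-c\}$ and $B'=\{k\in K':\delta_k>x\}$ one has $\delta_k-\lambda_k>c$ for $k\in B'$, so $\#B'<dc$ exactly as above, and hence $F_{\bar\mu}(x)\ge(\#K'-\#B')/d>F_\mu(x-c)-c$. Combining the two estimates yields $d_L(\mu,\bar\mu)\le c$, which is the claimed inequality for $\pi=\mathrm{id}$; taking the minimum over all permutations then finishes the proof.

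Since the whole argument is elementary, there is no genuine obstacle here; the only points that require attention are bookkeeping ones — disposing of the degenerate case $c=0$, keeping the strict versus non-strict inequalities consistent with the infimum in the definition of $d_L$, and — the one small idea that makes the proof transparent — establishing the bound for an arbitrary fixed matching and minimising only at the very end, rather than attempting to manipulate an optimal matching directly. (An equivalent presentation recasts the same counting in terms of the quantile functions $F_\mu^{-1},F_{\bar\mu}^{-1}$ on $(0,1)$, but the cumulative-distribution-function version above is the shortest.)
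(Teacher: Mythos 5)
This is an auxiliary result that the paper does not prove at all; it is simply cited from \cite{Bai2010} (Theorem A.38), so there is no ``paper's own proof'' to compare against. Your argument is correct, and it is essentially the standard textbook argument from that reference: fix one matching, set $c=\bigl(\tfrac1d\sum_k|\lambda_k-\delta_k|^\alpha\bigr)^{1/(\alpha+1)}$, and verify directly from the definition of the L\'evy metric, via the counting bound $\#B<dc$ (and its mirror) obtained from Markov-type comparison of $|\lambda_k-\delta_k|^\alpha$ against $c^\alpha$, that $c$ lies in the admissible set, i.e.\ $d_L(\mu,\bar\mu)\le c$; relabelling then gives the minimum over permutations. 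The only mild caveat is one you already address implicitly: the two one-sided bounds $F_\mu(x\pm\varepsilon)\mp\varepsilon$ are monotone in $\varepsilon$, so showing the inequalities at $\varepsilon=c$ indeed makes $c$ (and not merely some larger $\varepsilon$) admissible, and the degenerate case $c=0$ gives $\mu=\bar\mu$ as you note. No gap.
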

%\begin{lemma}\label{lemma: inv for}
%Let $G$ be a probability measure on the real line and $m_G$ its Stieltjes transform. Then for any real points $a<b$, we have
%\begin{align}
%G((a,b))\le \lim_{\varepsilon\searrow0}\frac{1}{\pi}\int_a^b \Im\left(m_G(x+i\varepsilon)\right)\d x.
%\end{align}
%\end{lemma}
%\begin{proof}
%The proof follows the lines of the proof of the inversion formula at continuity points $a$ and $b$, see Theorem B.8 in \cite{Bai2010}, since
%\begin{align*}
%\frac{1}{\pi}\int_a^b\Im \left(m_G(x+i\varepsilon)\right)\d x&=\int \frac{1}{\pi}\left[\arctan (\varepsilon^{-1}(b-y))-\arctan (\varepsilon^{-1}(a-y))\right]\d G(y)\\
%&\longrightarrow G((a,b))+\frac{1}{2}\big(G(\{a\})+G(\{b\})\big) \ \ \text{as }\varepsilon\searrow 0
%\end{align*}
%by the theorem of dominated convergence.
%\end{proof}
The next lemma and its proof are essentially taken from \cite{Krishnapur2012}, Lemma 34. Since the necessary dependence of (in his notation) $\delta$ on $y$ is neither mentioned in his statement nor its proof, we include a proof for completeness.
\begin{lemma}%[Theorem B. 12 \cite{Bai2010}]\label{theorem: B12}
\label{lemma: Levy distance}
Let $\mu$ and $\nu$ be two probability measures on the real line and $m_{\mu}$ and $m_{\nu}$ their Stieltjes transforms. Then for any $v>0$ we have
$$
d_L(\mu,\nu)\le 2\sqrt{\frac{v}{\pi}}+\frac{1}{2\pi}\int \left|\Im\left(m_{\mu}(u+iv)\right)-\Im\left(m_\nu(u+iv)\right)\right|\d u.
$$
%\begin{align}
%\Arrowvert F-G\Arrowvert&\le \frac{1}{\pi(2\gamma-1)}\Bigg[\int_{-\infty}^\infty\arrowvert f(u+iv)-g(u+iv)\arrowvert\d u\\
%&\hspace{3cm}+\frac{1}{v}\sup_x\int_{\arrowvert y \arrowvert<2va}\arrowvert G(x+y)-G(x) \arrowvert\d y\Bigg],
%\end{align} 
%where $v>0$, and $a$ and $\gamma$ are constants related to each other by
%\begin{align}
%\gamma=\frac{1}{\pi}\int_{\arrowvert u \arrowvert<a}\frac{1}{u^2+1}\d u>\frac{1}{2}.
%\end{align}
\end{lemma}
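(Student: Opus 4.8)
The plan is to realise $\tfrac1\pi\Im\bigl(m_\mu(u+iv)\bigr)$ as the Lebesgue density at $u$ of a smoothed measure $\mu\star\CC_v$, where $\CC_v$ denotes the Cauchy law with density $t\mapsto v/(\pi(t^2+v^2))$, and likewise for $\nu$; then to compare $\mu$ with $\mu\star\CC_v$, $\nu$ with $\nu\star\CC_v$, and $\mu\star\CC_v$ with $\nu\star\CC_v$, and to combine the three estimates by the triangle inequality for $d_L$.

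First I would record that $\Im\bigl(m_\mu(u+iv)\bigr)=\int_\R \tfrac{v}{(\lambda-u)^2+v^2}\,\d\mu(\lambda)$, which is immediate from $\Im\bigl((\lambda-u-iv)^{-1}\bigr)=v/((\lambda-u)^2+v^2)$. As the integrand is nonnegative, Fubini's theorem shows that $u\mapsto \tfrac1\pi\Im(m_\mu(u+iv))$ is a probability density, namely that of $\mu\star\CC_v$ (and similarly for $\nu$). Consequently the difference $\tfrac1\pi\Im(m_\mu(\cdot+iv))-\tfrac1\pi\Im(m_\nu(\cdot+iv))$ integrates to zero over $\R$, so that, using the relation $d_L\le d_K$ recalled after the definition of the distances,
\[
d_L(\mu\star\CC_v,\nu\star\CC_v)\ \le\ d_K(\mu\star\CC_v,\nu\star\CC_v)\ \le\ \frac1{2\pi}\int_\R\bigl|\Im m_\mu(u+iv)-\Im m_\nu(u+iv)\bigr|\,\d u ,
\]
the last step because $d_K(\mu\star\CC_v,\nu\star\CC_v)=\sup_x\bigl|\int_{-\infty}^x(\tfrac1\pi\Im m_\mu-\tfrac1\pi\Im m_\nu)(u+iv)\,\d u\bigr|$ and, for an integrable function with vanishing total integral, the supremum over $x$ of the modulus of its partial integrals is at most half its $L^1$-norm.

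It then remains to bound $d_L(\mu,\mu\star\CC_v)$ (and, identically, $d_L(\nu,\nu\star\CC_v)$) by $\sqrt{v/\pi}$. Let $Z\sim\mu$ and $W\sim\CC_v$ be independent, so $Z+W\sim\mu\star\CC_v$, and put $\delta=\sqrt{v/\pi}$. The elementary inclusions $\{Z+W\le x\}\subseteq\{Z\le x+\delta\}\cup\{W\le-\delta\}$ and $\{Z\le x-\delta\}\cap\{W\le\delta\}\subseteq\{Z+W\le x\}$, together with the symmetry of $\CC_v$, give for every $x\in\R$
\[
F_\mu(x-\delta)-\P(W>\delta)\ \le\ F_{\mu\star\CC_v}(x)\ \le\ F_\mu(x+\delta)+\P(W>\delta).
\]
Since $\P(W>\delta)=\tfrac1\pi\arctan(v/\delta)\le \tfrac{v}{\pi\delta}=\delta$ by the choice of $\delta$, the definition of the Lévy metric yields $d_L(\mu,\mu\star\CC_v)\le\delta=\sqrt{v/\pi}$, and the triangle inequality $d_L(\mu,\nu)\le d_L(\mu,\mu\star\CC_v)+d_L(\mu\star\CC_v,\nu\star\CC_v)+d_L(\nu\star\CC_v,\nu)$ produces the asserted bound.

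The only delicate point is the bookkeeping of constants in the last step: one must use the one-sided tail $\P(W>\delta)$ rather than $\P(|W|>\delta)$ in the two displayed inequalities, and the bound $\arctan x\le x$, in order to land exactly on $\sqrt{v/\pi}$ for each of the two desmoothing terms and hence on $2\sqrt{v/\pi}$ overall; all remaining steps are routine.
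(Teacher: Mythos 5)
Your proof is correct and follows essentially the same route as the paper's: split $d_L(\mu,\nu)$ by the triangle inequality through $\mu\star\CC_v$ and $\nu\star\CC_v$, bound the middle term via $d_L\le d_K$ and the $L^1$ distance of the Cauchy-smoothed densities (which are $\tfrac1\pi\Im m_\mu$, $\tfrac1\pi\Im m_\nu$), and bound each desmoothing term by $\sqrt{v/\pi}$ using the Cauchy tail. The only cosmetic differences are that you invoke $\arctan x\le x$ where the paper integrates $1/(\pi t^2)$, and that you spell out why an integrable function of zero total integral has partial integrals bounded by half its $L^1$-norm.
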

\begin{proof}
Let $C_v$ denote the Cauchy distribution with scale parameter $v>0$. Recall that its Lebesgue density $f_v$ is given by
$$
f_v(x)=\frac{1}{\pi}\frac{v}{v^2+x^2}, \ \ x\in\R.
$$
By the triangle inequality,
\begin{equation}\label{Dreieck}
d_L(\mu,\nu)\leq d_L\left(\mu,\mu\star C_v\right) + d_L\left(\mu\star C_v,\nu\star C_v\right)+ d_L\left(\nu,\nu\star C_v\right).
\end{equation}
Now observe that for $\eta=\mu,\nu$ and any $z=u+iv\in\C^+$,
$$
-\frac{1}{\pi}\Im\left(m_\eta(u+iv)\right)=\int\frac{1}{\pi}\frac{v}{(u-\lambda)^2+v^2}\d \eta(\lambda)=f_{\eta\star C_v}\left(u\right), 
$$
where $f_{\eta\star C_v}$ is the Lebesgue density of the convolution $\eta\star C_v$. Therefore,
\begin{align} 
d_L\left(\mu\star C_v,\nu\star C_v\right)&\leq  d_K\left(\mu\star C_v,\nu\star C_v\right)\nonumber\\
 &\leq\frac{1}{2}\int\left\arrowvert f_{\mu\star C_v}(u)-f_{\nu\star C_v}(u)\right\arrowvert \d u\nonumber\\
 &= \frac{1}{2\pi}\int \left|\Im\left(m_{\mu}(u+iv)\right)-\Im\left(m_\nu(u+iv)\right)\right|\d u.  \label{teil1!}
 \end{align}
As concerns $d_L\left(\eta,\eta\star C_v\right)$, let $X\sim\eta$ and $Z\sim C_1 $ be two independent random variables on a common probability space, whence $X+vZ\sim \eta\star C_v$ for any $v>0$. Using the elementary tail inequalities
$$
\P(Z<-t)=\P(Z>t)\leq\int_{t}^{\infty}\frac{1}{\pi t^2}\d t= \frac{1}{\pi t},
$$ 
we obtain for any $\delta>0$ and $x\in\R$,
 \begin{align*}
\P\left(X\leq x-\delta\right)\leq \P\left(X+vZ\leq x \right) +\P\left(Z>\frac{\delta}{v}\right)\leq  \P\left(X+vZ\leq x\right) + \frac{1}{\pi}\frac{v}{\delta}.
\end{align*}
 That is,
 \begin{equation}\label{wochenende1}
  \P\left(X\leq x-\delta\right)-\delta\leq \P\left(X+vZ\leq x\right)
   \end{equation}
whenever $\delta\geq \sqrt{v/\pi}$, in which case we also have
 \begin{align}\label{wochenende2}
\P\left(X+vZ\leq x\right)\leq  \P\left(X\leq x+\delta\right) +\P\left(Z<-\frac{\delta}{v}\right)\leq  \P\left(X\leq x+\delta\right) + \delta.
 \end{align}
 \eqref{wochenende1} and \eqref{wochenende2} imply
 \begin{equation}\label{levy!}
d_L\left(\eta,\eta\star C_v\right)\leq \sqrt{\frac{v}{\pi}},\ \ \eta=\mu,\nu.
 \end{equation}
 Plugging \eqref{levy!} and \eqref{teil1!} into \eqref{Dreieck} yields the claim.
 \end{proof}
 
 \begin{lemma}\label{7.7}
Let $\mu$, $\nu$ be two probability measures on the real line and $m_{\mu}$, $m_{\nu}$ the corresponding Stieltjes transforms. Then for any $z\in\C^+$,
\begin{equation}\label{eq: levy 2}
\left\arrowvert m_{\mu}(z)-m_{\nu}(z)\right\arrowvert\ \leq\ 2\frac{d_{BL}(\mu,\nu)}{(\Im z)^2\wedge \Im z}.
\end{equation}
\end{lemma}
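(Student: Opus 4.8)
The plan is to realize $m_\mu(z)-m_\nu(z)$ as the integral of a single fixed function against the signed measure $\mu-\nu$ and then control that integral through the bounded Lipschitz norm of the integrand. For fixed $z\in\C^+$ put $f_z(\lambda)=(\lambda-z)^{-1}$, so that by definition of the Stieltjes transform
\[
m_\mu(z)-m_\nu(z)=\int_\R f_z(\lambda)\,\d(\mu-\nu)(\lambda).
\]
Because $d_{BL}$ is defined via \emph{real-valued} test functions, the first step is to decompose $f_z=\Re f_z+i\,\Im f_z$ and to estimate the two real integrals $\int\Re f_z\,\d(\mu-\nu)$ and $\int\Im f_z\,\d(\mu-\nu)$ separately, combining them at the end via $|m_\mu(z)-m_\nu(z)|\le\big|\int\Re f_z\,\d(\mu-\nu)\big|+\big|\int\Im f_z\,\d(\mu-\nu)\big|$.

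The second step is to bound the bounded Lipschitz norms of $\Re f_z$ and $\Im f_z$. For the supremum norm, $|f_z(\lambda)|=|\lambda-z|^{-1}\le(\Im z)^{-1}$ for all $\lambda\in\R$, hence $\|\Re f_z\|\le(\Im z)^{-1}$ and $\|\Im f_z\|\le(\Im z)^{-1}$. For the Lipschitz constant, $f_z'(\lambda)=-(\lambda-z)^{-2}$ gives $|f_z'(\lambda)|\le(\Im z)^{-2}$, so $\|\Re f_z\|_L\le(\Im z)^{-2}$ and $\|\Im f_z\|_L\le(\Im z)^{-2}$. Consequently
\[
\|\Re f_z\|_{BL}=\max\!\big(\|\Re f_z\|_L,\|\Re f_z\|\big)\le\max\!\big((\Im z)^{-2},(\Im z)^{-1}\big)=\frac{1}{(\Im z)^2\wedge\Im z},
\]
and the identical bound holds for $\|\Im f_z\|_{BL}$.

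The third step combines the two. Since $d_{BL}(\mu,\nu)=\sup_{\|g\|_{BL}\le1}\int g\,\d(\mu-\nu)$ and this supremum is invariant under $g\mapsto-g$, homogeneity gives $\big|\int g\,\d(\mu-\nu)\big|\le\|g\|_{BL}\,d_{BL}(\mu,\nu)$ for every real-valued $g$ with $\|g\|_{BL}<\infty$. Applying this with $g=\Re f_z$ and $g=\Im f_z$ and plugging into the triangle inequality from the first step yields
\[
|m_\mu(z)-m_\nu(z)|\le\frac{2\,d_{BL}(\mu,\nu)}{(\Im z)^2\wedge\Im z},
\]
which is the claim. I do not anticipate a genuine obstacle; the only points needing a little care are the passage from the one-sided supremum in the definition of $d_{BL}$ to the two-sided bound $\big|\int g\,\d(\mu-\nu)\big|\le\|g\|_{BL}\,d_{BL}(\mu,\nu)$, and the elementary bookkeeping $\max\big((\Im z)^{-1},(\Im z)^{-2}\big)=\big((\Im z)^2\wedge\Im z\big)^{-1}$.
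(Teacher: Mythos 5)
Your proof is correct and is essentially the same as the paper's: both estimate the sup norm and Lipschitz constant of the real and imaginary parts of $\lambda\mapsto(\lambda-z)^{-1}$ by $(\Im z)^{-1}$ and $(\Im z)^{-2}$ respectively, then conclude via the bounded Lipschitz duality. The only cosmetic differences are that the paper verifies Lipschitzness through the finite-difference identity $\left|\frac{1}{\lambda-z}-\frac{1}{\lambda'-z}\right|\le\frac{|\lambda-\lambda'|}{(\Im z)^2}$ rather than the derivative, and rescales the test function by $(\Im z)^2\wedge\Im z$ instead of dividing the bound by it.
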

\begin{proof}
Note that
\begin{align*}
\left\arrowvert \frac{1}{\lambda-z}-\frac{1}{\lambda'-z}\right\arrowvert &= \frac{\arrowvert \lambda-\lambda'\arrowvert}{\arrowvert(\lambda-z)(\lambda'-z)\arrowvert}\leq\ \frac{\arrowvert \lambda-\lambda'\arrowvert}{(\Im z)^2},
\end{align*}
i.e. 
$$
\lambda\mapsto  \Re\left(\frac{(\Im z)^2\wedge \Im z}{\lambda-z}\right)\ \ \ \text{and}\ \ \ \lambda\mapsto  \Im\left(\frac{(\Im z)^2\wedge \Im z}{\lambda-z}\right)
$$
are bounded by $1$ in absolute value and $1$-Lipschitz. This proves \eqref{eq: levy 2}.
%hence
%$$
%\left\arrowvert m_{\mu}(z)-m_{\nu}(z)\right\arrowvert\ \leq\ 2\frac{d_{BL}(\mu,\nu)}{(\Im z)^2\wedge \Im z}.
%$$
\end{proof}

 \begin{lemma}\label{7.8}
 Let $(\mu_n)_{n\in\N}$ and $(\nu_n)_{n\in N}$ be two sequences of probability measures on the Borel $\sigma$-algebra on $\R$. Assume that $(\mu_n)_{n\in\N}$ is tight. Then
\begin{equation}\label{we3}
d_L(\mu_n,\nu_n)\rightarrow 0%\ \ \Leftrightarrow\ \ d_P(\mu_n,\nu_n)\rightarrow 0
\ \ \Leftrightarrow\ \ d_{BL}(\mu_n,\nu_n)\rightarrow 0.
\end{equation}
Moreover, tightness of $(\mu_n)_{n\in\N}$ and \eqref{we3} imply weak convergence $\mu_n-\nu_n\Rightarrow 0$ on the space of finite signed measures on $\R$.
 \end{lemma}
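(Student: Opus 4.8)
The plan is to reduce the statement to two classical facts together with a routine subsequence argument, the tightness hypothesis on $(\mu_n)_{n\in\N}$ entering only through Prokhorov's theorem. The facts I would invoke are: (a) both the L\'evy metric $d_L$ and the dual bounded-Lipschitz metric $d_{BL}$ metrize the topology of weak convergence on the set of Borel probability measures on $\R$ --- for $d_L$ this is the classical characterization of the L\'evy metric, and for $d_{BL}$ it is Dudley's theorem, the nontrivial direction being that $d_{BL}(\rho_n,\rho)\to 0$ implies $\rho_n\Longrightarrow\rho$; and (b) Prokhorov's theorem, that a tight family of probability measures on $\R$ is relatively compact in the weak topology. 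I would also use repeatedly the elementary principle that a real sequence tends to $0$ if and only if every subsequence has a further subsequence tending to $0$.

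For the equivalence I would prove in detail that $d_{BL}(\mu_n,\nu_n)\to 0$ implies $d_L(\mu_n,\nu_n)\to 0$; the converse then follows verbatim with the roles of $d_L$ and $d_{BL}$ interchanged, as both directions use only tightness of $(\mu_n)$. So assume $d_{BL}(\mu_n,\nu_n)\to 0$ and fix an arbitrary subsequence. By tightness of $(\mu_n)$ and (b), a further subsequence $(n_k)$ satisfies $\mu_{n_k}\Longrightarrow\mu$ for some probability measure $\mu$. By (a), $d_{BL}(\mu_{n_k},\mu)\to 0$, so the triangle inequality together with $d_{BL}(\mu_{n_k},\nu_{n_k})\to 0$ gives $d_{BL}(\nu_{n_k},\mu)\to 0$, i.e.\ $\nu_{n_k}\Longrightarrow\mu$; invoking (a) once more, $d_L(\mu_{n_k},\mu)\to 0$ and $d_L(\nu_{n_k},\mu)\to 0$, hence $d_L(\mu_{n_k},\nu_{n_k})\to 0$ along this subsequence. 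As the subsequence was arbitrary, $d_L(\mu_n,\nu_n)\to 0$.

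For the \emph{moreover} part I would assume \eqref{we3}, equivalently $d_L(\mu_n,\nu_n)\to 0$, fix $f\in C_b(\R)$, and prove $\int f\,\d\mu_n-\int f\,\d\nu_n\to 0$ by the same device. Given an arbitrary subsequence, pass by tightness of $(\mu_n)$ and (b) to a further subsequence $(n_k)$ with $\mu_{n_k}\Longrightarrow\mu$; then $d_L(\mu_{n_k},\mu)\to 0$ and $d_L(\nu_{n_k},\mu)\le d_L(\nu_{n_k},\mu_{n_k})+d_L(\mu_{n_k},\mu)\to 0$, so $\nu_{n_k}\Longrightarrow\mu$ as well, whence $\int f\,\d\mu_{n_k}\to\int f\,\d\mu$ and $\int f\,\d\nu_{n_k}\to\int f\,\d\mu$, giving $\int f\,\d(\mu_{n_k}-\nu_{n_k})\to 0$. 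The subsequence principle then yields $\int f\,\d(\mu_n-\nu_n)\to 0$ for every $f\in C_b(\R)$, i.e.\ the asserted weak convergence $\mu_n-\nu_n\Longrightarrow 0$ on the space of finite signed measures.

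I do not anticipate a real obstacle here; the one point deserving care is that tightness of $(\mu_n)$ is genuinely needed --- without it mass may escape to infinity and even the equivalence can fail --- and it is used precisely to extract weakly convergent subsequences via Prokhorov. If one prefers to avoid citing Dudley's theorem, the implication that $d_{BL}(\mu_n,\nu_n)\to 0$ forces $d_L(\mu_n,\nu_n)\to 0$ can instead be obtained quantitatively from Lemma \ref{7.7} and Lemma \ref{lemma: Levy distance}: for a parameter $R>0$ one bounds $|\Im m_{\mu_n}(u+iv)-\Im m_{\nu_n}(u+iv)|$ by $2\,d_{BL}(\mu_n,\nu_n)/(v^2\wedge v)$ on $[-R,R]$, while the integral over $|u|>R$ is controlled by the mass that the Cauchy-smoothed measures $\mu_n\star C_v$ and $\nu_n\star C_v$ assign to the complement of $[-R,R]$ --- uniformly small by tightness of $(\mu_n)$, hence of $(\nu_n)$, together with the Cauchy tail bound appearing in the proof of Lemma \ref{lemma: Levy distance} --- after which one lets first $v\to 0$ and then $R\to\infty$. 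That tail estimate is the only nontrivial computation involved.
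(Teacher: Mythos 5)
Your proposal is correct and follows essentially the same route as the paper: extract a weakly convergent subsubsequence via Prokhorov, note that both $d_L$ and $d_{BL}$ metrize weak convergence, apply the triangle inequality, and finish with the subsequence principle. The only cosmetic difference is that you argue both directions of the equivalence by the same symmetric subsequence mechanism, whereas the paper handles the direction $d_{BL}\to 0 \Rightarrow d_L\to 0$ immediately from the elementary inequality $d_L^2\le d_{BL}$ and reserves the subsequence argument for the non-trivial converse.
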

 
 \begin{proof}
 As concerns the equivalence relation, we need only to verify that 
 \begin{equation}\label{Folgerung}
 d_L(\mu_n,\nu_n)\rightarrow 0\ \Rightarrow\  d_{BL}(\mu_n,\nu_n)\rightarrow 0,
 \end{equation}
 because %$d_L\leq d_P$, while $d_P$ and $d_{BL}$ satisfy the relation $d_P^2\leq d_{BL}\leq 2 d_P$ 
 $d_L^2\leq d_{BL}$ (see, e.g. \cite{Huber1974}).  Assume that  $d_L(\mu_n,\nu_n)\rightarrow 0$. Tightness of $(\mu_n)_n$ implies that any subsequence $(\mu_{n_k})_k$ possesses a subsubsequence $(\mu_{n_{k_l}})_l$ which converges weakly to a limiting probability measure $\mu$, say. Consequently, as both, $d_{BL}$ and $d_L$  metrize weak convergence on the space of probability measures on $\R$,
 \begin{equation}
 d_L(\mu_{n_{k_l}},\mu)\rightarrow 0\ \ \Leftrightarrow\ \ d_{BL}(\mu_{n_{k_l}},\mu)\rightarrow 0.
  \end{equation} 
 By the triangle inequality,
 $$
 d_L(\nu_{n_{k_l}},\mu)\leq d_L(\mu_{n_{k_l}},\mu) + d_L(\mu_{n_{k_l}},\nu_{n_{k_l}})\rightarrow 0,
 $$
which in turn is equivalent to $ d_{BL}(\nu_{n_{k_l}},\mu) \rightarrow 0$. Again by the triangle inequality, $ d_{BL}(\mu_{n_{kl}},\nu_{n_{k_l}}) \rightarrow 0$. %Finally, we conclude the proof of \eqref{Folgerung} by %contradiction. Assume that $d_L(\mu_n,\nu_n)\rightarrow 0$ but  $d_{BL}(\mu_n,\nu_n)\not\rightarrow 0$. Then there exist a subsequence $(n_k)_k$ and $\varepsilon>0$, such that 
 %$d_{BL}(\mu_{n_k},\nu_{n_k})>\varepsilon$ for all $k\in\N$. However by the above consideration, there exists some subsubsequence $(n_{k_l})_l$ along which both $(\mu_{n_{k_l}})_l$ and $(\nu_{n_{k_l}})_l$ %converge weakly to the same limit, which by means of the triangle inequality is a contradiction to $d_{BL}(\mu_{n_{k_l}},\nu_{n_{k_l}})>\varepsilon$ for all $l\in\N$. 
This proves \eqref{Folgerung} and therefore the equivalence relation \eqref{we3}. \\
As concerns the second statement, it is sufficient to show that any subsequence $(n_k)_k$ possesses a subsubsequence $(n_{k_l})_l$ with $\mu_{n_{k_l}}-\nu_{n_{k_l}}\Rightarrow 0$. But this follows immediately from the above arguments, because for any subsequence $(n_k)_k$, there exist a  subsubsequence $(n_{k_l})_l$ and a measure $\mu$ such that both, $\mu_{n_{k_l}}\Rightarrow\mu$ and  $\nu_{n_{k_l}}\Rightarrow\mu$, hence   $\mu_{n_{k_l}}-\nu_{n_{k_l}}\Rightarrow 0$. \end{proof}
\smallskip

\subsection*{Acknowledgements}We are grateful to Jack Silverstein for kindly answering numerous questions and for pointing us to the reference \cite{couillet2011}.\bigskip

\bibliographystyle{imsart-nameyear}
\bibliography{referenceA}

\end{document}